\PassOptionsToPackage{dvipsnames}{xcolor}
\pdfoutput=1
\documentclass{article}

\usepackage[utf8]{inputenc}
\usepackage{amssymb}
\usepackage{amsfonts}
\usepackage[tbtags]{amsmath}
\usepackage{amsthm}
\usepackage{romanbar}
\usepackage{tikz}
\usepackage{float}
\usepackage[left=3.5cm,right=3.5cm,top=3.5cm,bottom=3.5cm]{geometry}
\usetikzlibrary{cd}
\usetikzlibrary{decorations.pathmorphing}
\usetikzlibrary{positioning}
\usetikzlibrary{patterns}
\usetikzlibrary{patterns.meta}
\usepackage[backend=biber,style=alphabetic]{biblatex}
\usepackage{titlesec}
\usepackage[dvipsnames]{xcolor}
\usepackage{stmaryrd}
\usepackage{enumitem}
\usepackage{titletoc}
\usepackage{hyperref}

\newcommand\Hom[3][]{\mathrm{Hom}_{#1}(#2,#3)}
\newcommand\m[1]{\mathrm{mod}(#1)}

\newcommand\Db[1]{\mathrm{D}^{\mathrm{b}}(#1)}
\newcommand\per[1]{\mathrm{per}(#1)}

\newcommand\id[1]{\mathrm{id}_{#1}}

\newcommand\lotimes[2]{\overset{\mbox{\tiny \bf{L}}}{\otimes}_{#1}#2}

\let\ens\mathbb
\newcommand\Z{\ens{Z}}

\tikzstyle{etiquette}=[minimum height=0.8cm, minimum width=1cm]

\newtheoremstyle{definition}% name of the style to be used
{\topsep}% measure of space to leave above the theorem. E.g.: 3pt
{\topsep}% measure of space to leave below the theorem. E.g.: 3pt
{\normalfont}% name of font to use in the body of the theorem
{0pt}% measure of space to indent
{\bfseries\fontfamily{bch}\selectfont}% name of head font
{.}% punctuation between head and body
{ }% space after theorem head; " " = normal interword space
{\thmname{#1}\thmnumber{ #2}\textnormal{\thmnote{ (#3)}}}

\newtheoremstyle{notation}{\topsep}{\topsep}{\normalfont}{0pt}{\bfseries\fontfamily{bch}\selectfont}{.}{ }{\thmname{#1}\thmnumber{ #2}\textnormal{\thmnote{ (#3)}}}

\newtheoremstyle{lemma}{\topsep}{\topsep}{\itshape}{0pt}{\bfseries\fontfamily{bch}\selectfont}{.}{ }{\thmname{#1}\thmnumber{ #2}\textnormal{\thmnote{ (#3)}}}

\newtheoremstyle{proposition}{\topsep}{\topsep}{\itshape}{0pt}{\bfseries\fontfamily{bch}\selectfont}{.}{ }{\thmname{#1}\thmnumber{ #2}\textnormal{\thmnote{ (#3)}}}

\newtheoremstyle{corollary}{\topsep}{\topsep}{\itshape}{0pt}{\bfseries\fontfamily{bch}\selectfont}{.}{ }{\thmname{#1}\thmnumber{ #2}\textnormal{\thmnote{ (#3)}}}

\newtheoremstyle{theorem}{\topsep}{\topsep}{\itshape}{0pt}{\bfseries\fontfamily{bch}\selectfont}{.}{ }{\thmname{#1}\thmnumber{ #2}\textnormal{\thmnote{ (#3)}}}

\newtheoremstyle{example}{\topsep}{\topsep}{\normalfont}{0pt}{\bfseries\fontfamily{bch}\selectfont}{.}{ }{\thmname{#1}\thmnumber{ #2}\textnormal{\thmnote{ (#3)}}}

\newtheoremstyle{remark}{\topsep}{\topsep}{\normalfont}{0pt}{\bfseries\fontfamily{bch}\selectfont}{.}{ }{\thmname{#1}\thmnumber{ #2}\textnormal{\thmnote{ (#3)}}}

\theoremstyle{definition}
\newtheorem{defn}{Definition}[section]

\theoremstyle{notation}
\newtheorem*{nota}{Notation}

\theoremstyle{lemma}
\newtheorem{lem}[defn]{Lemma}

\theoremstyle{proposition}
\newtheorem{prop}[defn]{Proposition}

\theoremstyle{corollary}

\theoremstyle{theorem}
\newtheorem{thm}[defn]{Theorem}
\newtheorem*{thm*}{Theorem}

\theoremstyle{example}
\newtheorem{ex}[defn]{Example}

\theoremstyle{remark}
\newtheorem{rem}[defn]{Remark}

\titleformat{\section}[block]
{\centering\normalfont\Large\fontfamily{qhv}\selectfont}
{\thesection.}{0.5em}{}

\titleformat{\subsection}[block]
{\flushleft\normalfont\large\itshape\fontfamily{cmss}\selectfont}
{\thesubsection.}{0.5em}{}

\titlecontents{section}[0pt]{\bfseries\addvspace{0.1cm}}
{\thecontentslabel. \hspace{0.1mm}}
{}
{\hfill\contentspage}

\titlecontents{subsection}[15pt]{}
{\thecontentslabel. \hspace{0.1mm}}
{}
{\titlerule*[0.5pc]{.}\contentspage}

\title{\fontfamily{qhv}\selectfont Generalized Kauer moves and derived equivalences of Brauer graph algebras}
\author{\scshape Valentine Soto}
\date{}

\addbibresource{bib1.bib}

\begin{document}
\maketitle

\medskip

\begin{center}
\begin{minipage}{0.75\textwidth}
\textbf{\fontfamily{bch}\selectfont Abstract.} Kauer moves are local moves of an edge in a Brauer graph that yield derived equivalences between Brauer graph algebras \cite{Kauer}. These derived equivalences may be interpreted in terms of silting mutations. In this paper, we generalize the notion of Kauer moves to any finite number of edges. Their construction is based on cutting and pasting actions on the Brauer graph. To define these actions, we use an alternative definition of Brauer graphs coming from combinatorial topology \cite{Lazarus}. Using the link between Brauer graph algebras and gentle algebras via the trivial extension \cite{Schroll2}, we show that the generalized Kauer moves also yield derived equivalences of Brauer graph algebras and also may be interpreted in terms of silting mutations.
\end{minipage}
\end{center}

\medskip

\tableofcontents

\medskip

\phantomsection
\section*{Introduction}
\addcontentsline{toc}{section}{Introduction}

\medskip

Brauer graph algebras are finite dimensional algebras defined thanks to a combinatorial data called Brauer graphs. They were first introduced in modular representation theory of finite groups by Donovan and Freislich \cite{DF}. Thereafter, these algebras appear in other contexts such as in connection with cluster theory \cite{MS,Ladkani}, with Hecke algebras \cite{Ariki, Ariki2}, with gentle algebras via the trivial extension \cite{Schroll2} and with dessins d'enfants \cite{MS2}. Moreover, Brauer graph algebras coincide with symmetric special biserial algebras \cite{Schroll2}, whose representation theory has been a lot studied. For instance, this correspondence has allowed to classify finite dimensional indecomposable modules over such algebras in terms of string and band combinatorics \cite{DF, WW} and to classify their Auslander-Reiten components \cite{ES}. We refer to \cite{Schroll} for a detailed list of results linked to Brauer graph algebras.

A Brauer graph is a finite graph equipped with a multiplicity function defined on each vertex and an orientation of the edges around each vertex. It can also be defined as a graph embedded into an oriented surface, together with a multiplicity function defined on each vertex \cite{MS}. This alternative definition of a Brauer graph has added a geometric point of view in the study of representation theory of Brauer graph algebras. One reason for the ongoing interest for this class of algebra is that the data of the graph and the surface associated to a Brauer graph algebra plays an important role in the representation theory of Brauer graph algebras. The notion of Brauer graph also appear in combinatorial topology (see \cite{Lazarus} for instance) and is called an oriented map in this context.

In recent years, there has been a particular interest in the study of derived equivalences of Brauer graph algebras. It has been proved that the class of Brauer graph algebras is stable under derived equivalences \cite{AZ}. Moreover, Opper and Zvonareva have classified Brauer graph algebras up to derived equivalences \cite{OZ}. This classification is based on the work of Antipov \cite{Antipov} and relies on $A_{\infty}$-categories associated to certain collections of arcs on graded punctured surface. In this paper, we are interested in a combinatorial tool, called Kauer moves, that yields derived equivalences of Brauer graph algebras with multiplicity identically one \cite{Kauer}. This Kauer move is a local move of an edge in the associated Brauer graph. These derived equivalences are given by Okuyama-Rickard tilting complexes \cite{Okuyama}, which can be interpreted in terms of silting mutations in the derived category as defined in \cite{AI}. 

The goal of this paper is to generalize the notion of Kauer moves to any finite number of edges. We want to extend this definition so that the generalized Kauer moves also yield derived equivalences of Brauer graph algebras and can also be interpreted in terms of silting mutations. Our main result is the following.

\medskip

\begin{thm*}[Theorem \ref{thm:generalized Kauer move}]
Let $B$ be a Brauer graph algebra with multiplicity identically one. We set $\Gamma$ to be the underlying Brauer graph of $B$ and $\Gamma_{1}$ to be the set of edges of $\Gamma$. Then for any subset $E$ of $\Gamma_{1}$,

\smallskip

\begin{enumerate}[label=(\arabic*)]
\item The Brauer graph algebra $B_{E}$ obtained by a generalized Kauer move of $E$ in $\Gamma$ is derived equivalent to $B$.
\item The silting mutation of $B$ over the projective $B$-module associated to the set of edges $\Gamma_{1}\backslash E$ is tilting. Its endomorphism algebra is isomorphic to $B_{E}$.
\end{enumerate}
\end{thm*}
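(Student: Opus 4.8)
The plan is to derive part (1) from part (2) via Rickard's theorem: a tilting complex over $B$ whose endomorphism algebra is $B_{E}$ yields a derived equivalence $\Db{B}\simeq\Db{B_{E}}$, so once (2) is established, (1) is immediate. I therefore concentrate on (2). First I would make the mutation explicit. Writing $B=\bigoplus_{i\in\Gamma_{1}}P_{i}$ with $P_{i}$ the indecomposable projective attached to the edge $i$, let $Q=\bigoplus_{i\notin E}P_{i}$ be the projective associated to $\Gamma_{1}\setminus E$. The silting mutation in the statement keeps $Q$ fixed and replaces each $P_{i}$, $i\in E$, by the cone $T_{i}$ of a minimal left $\mathrm{add}(Q)$-approximation $P_{i}\xrightarrow{f_{i}}Q_{i}$, giving the two-term complex $T=Q\oplus\bigoplus_{i\in E}T_{i}$ concentrated in degrees $-1$ and $0$. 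By \cite{AI}, any mutation of a silting object is again silting, so $T$ is automatically a two-term silting complex; the two things left to prove are that $T$ is \emph{tilting} and that $\Hom[\mathrm{K}^{b}(\mathrm{proj}\,B)]{T}{T}$ is isomorphic to $B_{E}$.

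For the tilting property it suffices to check that $\Hom{T}{T[n]}=0$ for $n<0$, the vanishing for $n>0$ being part of silting (note that over a symmetric algebra this is genuine work, not automatic). This is the step where I expect to use the combinatorial input of the generalized Kauer move. Reading the approximations $f_{i}\colon P_{i}\to Q_{i}$ off the oriented map lets me describe the differentials of $T$ explicitly in terms of the cyclic orderings of the edges around the vertices of $\Gamma$. To organize and verify the required vanishing I would pass through the trivial extension link of \cite{Schroll2}: realizing $B$ as the trivial extension of an associated gentle algebra places the complexes $T_{i}$ in the surface/arc model, where the negative $\mathrm{Hom}$-spaces are governed by crossings of the corresponding graded arcs, and the cutting-and-pasting operation defining the move is exactly what guarantees that the relocated arcs produce no such crossings.

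The identification of the endomorphism algebra is then a direct, if delicate, computation: I would compute the morphism spaces between the summands $T_{i}$ ($i\in E$) and $P_{j}$ ($j\notin E$) in the homotopy category and match the resulting quiver with relations against the Brauer graph algebra of the moved graph $\Gamma_{E}$. The point is that the cyclic orderings at the vertices of $\Gamma_{E}$ produced by cutting and pasting all the edges of $E$ should reproduce precisely the compositions of morphisms in $\mathrm{End}(T)$, so that the quivers and the special biserial relations on both sides agree.

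The main obstacle is that the mutation over a whole set $E$ is \emph{not} the composition of the single Kauer moves of \cite{Kauer}: silting mutations at different summands do not commute, and a simultaneous mutation over $E$ genuinely differs from iterating one-edge mutations. Consequently I cannot simply induct on $|E|$ and invoke Kauer's original theorem; instead I must control all the approximations $f_{i}$, $i\in E$, and their mutual interaction at once. Keeping track of this interaction — in particular ensuring both the vanishing of the negative self-extensions and that $\mathrm{End}(T)$ remains a Brauer graph algebra — is the crux, and it is exactly what the global cutting-and-pasting description of the generalized Kauer move on the oriented map is designed to encode.
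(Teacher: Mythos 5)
Your scaffolding matches the paper's in outline — deduce (1) from (2) via Rickard's derived Morita theory, and route the hard work through the trivial extension link with gentle algebras — but the two substantive steps are left unexecuted, and the one concrete mathematical claim you make about where the difficulty lies is backwards. Over a symmetric algebra, silting implies tilting \emph{automatically}: since $B$ is symmetric, Serre duality gives $\Hom[\per{B}]{T}{T[-n]}\simeq D\Hom[\per{B}]{T}{T[n]}$, so the vanishing for negative shifts is free once Aihara--Iyama give that the mutation is silting; this is precisely Proposition \ref{prop:tilting mutation in symmetric algebra}, so your parenthetical that this is ``genuine work, not automatic'' is wrong. The genuinely hard tilting statement in the paper lives at the level of the gentle algebra $\Lambda$ (which is \emph{not} symmetric): Theorem \ref{thm:mutation in gentle algebra}\ref{item1:mutation in gentle algebra}, proved via the Aihara--Iyama injectivity criterion for the approximation, checked case by case (loops, shared $\sigma$-orbits, etc.) using the basis of morphism spaces in the OPS surface model. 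Your sketch never isolates this statement, and within your plan there is no reason it would appear — which is symptomatic of the real gap.

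The gap is the identification $\mathrm{End}_{\per{B}}(T)\simeq B_{E}$, which you defer to ``a direct, if delicate, computation'' of morphism spaces and quivers with relations, with no mechanism to carry it out; this brute-force matching is exactly what the paper's architecture is built to avoid. The paper's route needs two structural inputs absent from your proposal: (i) Lemma \ref{lem:tilting mutations}, asserting that the mutation commutes with $-\lotimes{\Lambda}{B}$ — and note this lemma requires the mutation over $\Lambda$ to be \emph{tilting}, not merely silting, which is why the gentle-level theorem is indispensable and why tilting over $B$ alone (the part you focus on) does not suffice; and (ii) the construction of an admissible cut $d$ on $\Gamma$ with $d(\alpha(h,H'))=0$ for all $h\in H'$, so that Theorem \ref{thm:mutation in gentle algebra}\ref{item2:mutation in gentle algebra} identifies $\mathrm{End}_{\per{\Lambda}}(T)$ with the gentle algebra of $\mu^{+}_{\pi}(\Gamma,d)$ — the existence of such a cut is itself a point to be proved, not a formality. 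Given (i) and (ii), Rickard's Theorem \ref{thm:Rickard} yields $\mathrm{End}_{\per{B}}(T\lotimes{\Lambda}{B})\simeq\mathrm{Triv}(\mathrm{End}_{\per{\Lambda}}(T))$, and Theorem \ref{thm:Schroll} plus the commutativity of sector moves (Proposition \ref{prop:commutativity of graded generalized Kauer moves}, which incidentally shows the graded moves of distinct maximal sectors \emph{do} commute, contrary to your framing) converts this trivial extension into $B_{\mu^{+}_{H'}(\Gamma)}$. You correctly observe that one cannot induct on single Kauer moves, but the alternative you propose — ``control all the approximations at once'' in the arc model of $B$ itself — is exactly the work the gentle-algebra detour performs, and your proposal supplies none of it.
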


\medskip

The key points to prove this theorem is the link between Brauer graph algebras and gentle algebras via the trivial extension \cite{Schroll2} and the fact that derived equivalences between finite dimensional algebras yield derived equivalences between their trivial extensions \cite{Rickard}. We use the topological model of the bounded derived category of gentle algebras introduced in \cite{OPS} to interpret silting mutation in gentle algebras in terms of moves of several arcs in its admissible dissection. We will prove in Example \ref{ex:succession of standard Kauer moves 2} that the generalized Kauer moves cannot be described as a succession of standard Kauer moves in general. Therefore, Theorem \ref{thm:mutation in gentle algebra} generalizes Corollary 3.7 in \cite{CS}.

In the first section, we define the notion of generalized Kauer moves. We first recall the definition and some results on Brauer graph algebras from \cite{Schroll}. To define these generalized Kauer moves, we will use an alternative definition of Brauer graphs arising from combinatorial topology. This allows us to define a cutting and pasting action on Brauer graphs.

In the second section, we adapt the notion of generalized Kauer moves for gentle algebras. In order to do so, we use the fact that gentle algebras are in bijection with Brauer graph algebras (of multiplicity identically one) equipped with an admissible cut \cite{Schroll}. Then, we use the correspondence between gentle algebras and marked surfaces equipped with an admissible dissection \cite{OPS} to interpret moves of admissible dissections in terms of the generalized Kauer moves for gentle algebras.

In the last section, we interpret our notion of generalized Kauer move for Brauer graph algebras and gentle algebras in terms of silting mutations. We begin the section with the definition and some results on silting and tilting mutations from \cite{AI}. Our main result will be proved in this section.

\medskip

\noindent \textbf{\fontfamily{bch}\selectfont Acknowledgments.} This paper is part of a PhD thesis supervised by Claire Amiot and supported by the CNRS. The author thanks her advisor for the useful advice and comments on the paper. Part of this work was done while the author was in Sherbrooke University. She also thanks Thomas Brüstle for helpful discussions and Sherbrooke University for warm welcome.

\medskip

\phantomsection
\section*{Notations and conventions}
\addcontentsline{toc}{section}{Notations and conventions}

\medskip

In this paper, all algebras are supposed to be over an algebraically closed field $k$. For any algebra $\Lambda$, we denote by $\m{\Lambda}$ the category of finitely generated right $\Lambda$-modules and  $\mathrm{proj}(\Lambda)$ the full subcategory of $\m{\Lambda}$ consisting of projective modules. Furthermore, $\Db{\Lambda}$ denotes the bounded derived category associated to $\m{\Lambda}$ and $\per{\Lambda}$ denotes the full subcategory of $\Db{\Lambda}$ consisting of perfect complexes. Moreover, we denote by $\mathrm{Triv}(\Lambda)$ the trivial extension of $\Lambda$ by the $\Lambda$-$\Lambda$-bimodule $D\Lambda=\Hom[k]{\Lambda}{k}$ : it is the algebra defined by $\mathrm{Triv}(\Lambda)=\Lambda\oplus D\Lambda$ as a $k$-vector space and whose multiplication is induced by the $\Lambda$-$\Lambda$-bimodule structure of $D\Lambda$ 

\[(a,\phi).(b,\psi)=(ab, a\psi+\phi b)\]

\noindent for all $a,b\in \Lambda$ and $\phi,\psi\in D\Lambda$. Finally, arrows in a quiver will be composed from right to left : for arrows $\alpha$, $\beta$, we write $\beta \alpha$ for the path from the start of $\alpha$ to the target of $\beta$. Similarly, morphisms will be composed from right to left.

\medskip

\section{Generalized Kauer moves}

\medskip

The goal of this section is to generalize the notion of Kauer moves. We extend it from one edge of a Brauer graph to any finite number of edges. These Kauer moves are local moves on the Brauer graph that yield derived equivalences between the corresponding Brauer graph algebras. We also want to have such result for the generalized Kauer moves. 

\medskip

\subsection{Brauer graph algebras}

\medskip

In this part, we recall the definition of a Brauer graph algebra and how they are closely linked to gentle algebras. We refer to \cite{Schroll} for more information on Brauer graph algebras.

\medskip

\begin{defn} \label{def:Brauer graph}
    A \textit{Brauer graph (of multiplicity 1)} is a tuple $\Gamma=(\Gamma_{0},\Gamma_{1},\circ)$ where 

    \smallskip

    \begin{itemize}[label=\textbullet, font=\tiny]
        \item $(\Gamma_{0},\Gamma_{1})$ is a finite graph whose vertex set is $\Gamma_{0}$ and edge set is $\Gamma_{1}$,
        \item $\circ$ is called the \textit{orientation} of $\Gamma$ and is given for every vertex $v\in \Gamma_{0}$ by a cyclic ordering of the edges incident with $v$. Note that the cyclic ordering of a vertex incident to a single edge $i$ is given by $i$.
    \end{itemize}
\end{defn}

\medskip

\begin{rem}
    In this paper, we are only interested in Brauer graphs of multiplicity 1. More generally, a Brauer graph also contains a multiplicity function $m:\Gamma_{0}\rightarrow\Z_{>0}$ \cite{Schroll}.
\end{rem}

\medskip

Unless otherwise stated, the orientation of a Brauer graph will be given by locally embedding each vertex into the counterclockwise oriented plane.

\medskip

\begin{ex} \label{ex:Brauer graph} Let $\Gamma$ be the Brauer graph given by

\smallskip

    \begin{figure}[H]
        \centering
        \begin{tikzpicture}[scale=1.1]
        \tikzstyle{vertex}=[minimum size=0.7cm,circle,draw,scale=0.8]
        \node[vertex] (a) at (-2,0) {a};
        \node[vertex] (b) at (2,0) {b};
        \draw[Red] (a)--(b) node[near end, above, scale=0.75]{1};
        \draw[VioletRed] (a) to[bend left=45] node[midway, above, scale=0.75]{2} (-0.1,0.1); 
        \draw[VioletRed] (0,-0.1) to[bend right=45] (b) ;
        \draw[Orange] (a.90) arc(25:333:0.6) node[left, midway, scale=0.75]{3};
        \end{tikzpicture}
        \label{Brauer graph}
    \end{figure}

\smallskip

\noindent Using previous convention, the cyclic ordering at vertex $a$ is given by $1<2<3<3<1$ and at vertex $b$ by $1<2<1$.
\end{ex}

\medskip

Given a Brauer graph $\Gamma=(\Gamma_{0},\Gamma_{1},\circ)$, we construct a quiver $Q_{\Gamma}=(Q_{0},Q_{1})$ as follows \cite{Schroll}

\smallskip

\begin{itemize}[label=\textbullet, font=\tiny]
    \item The vertex set $Q_{0}$ of $Q_{\Gamma}$ is the edge set $\Gamma_{1}$ of the Brauer graph $\Gamma$,
    \item The arrow set $Q_{1}$ of $Q_{\Gamma}$ is induced by the orientation $\circ$ of $\Gamma$. More precisely, given two edges $i$ and $j$ of $\Gamma$ incident with the same vertex $v\in \Gamma_{0}$, there is an arrow in $Q_{\Gamma}$ from $i$ to $j$ if $j$ is the direct successor of $i$ in the cyclic ordering of $v$. 
\end{itemize}

\smallskip

\noindent Notice that we can associate an oriented cycle $C_{v}$ in $Q_{\Gamma}$ to any vertex $v\in\Gamma_{0}$ that is not incident to a single edge. This oriented cycle $C_{v}$ is unique up to cyclic permutation and is called a \textit{special cycle} at $v$. Moreover, we call a \textit{special $i$-cycle} at $v$ a representative of $C_{v}$ in its cyclic permutation class that begins and ends with $i\in Q_{0}$. Note that a special $i$-cycle at $v$ is not necessarily unique but there are at most two special $i$-cycles at $v$ and this happens exactly when $i$ is a loop at $v$.

\medskip

\begin{ex} \label{ex:quiver of a Brauer graph}
    Let $\Gamma$ be the Brauer graph defined in Example \ref{ex:Brauer graph}. By construction, its associated quiver is given by

    \smallskip

    \begin{figure}[H]
        \centering
        \begin{tikzpicture}[scale=1.1]
        \node[minimum size=0.7cm, Red] (1) at (0,2.7) {1};
        \node[minimum size=0.7cm, VioletRed] (2) at (2,0) {2};
        \node[minimum size=0.7cm, Orange] (3) at (-2,0) {3};
        \draw[->] (2)--(3) node[midway, below]{$\alpha_{2}$};
        \draw[<-] (3.210) arc(130:405:0.4) node[midway, below]{$\alpha_{3}$};
        \draw[->] (3)--(1.-135) node[midway, above left]{$\alpha_{4}$};
        \draw[<-] (1.-60)--(2.105) node[fill=white, midway]{$\beta_{2}$};
        \draw[->] (1.-100)--(2.140) node[near end, below left]{$\alpha_{1}$};
        \draw[->] (1.-30)--(2.65) node[near start, above right]{$\beta_{1}$};
        \end{tikzpicture}
        \label{Quiver of a Brauer graph}
    \end{figure}

    \smallskip

    \noindent There are two special cycles in this quiver given by $C_{a}=\alpha_{4}\alpha_{3}\alpha_{2}\alpha_{1}$ and $C_{b}=\beta_{2}\beta_{1}$. Since 3 is a loop at $a$ in $\Gamma$, there are two distinct special 3-cycles at $a$ given by $\alpha_{2}\alpha_{1}\alpha_{4}\alpha_{3}$ and $\alpha_{3}\alpha_{2}\alpha_{1}\alpha_{4}$.
\end{ex}

\medskip

\begin{defn}\label{def:Brauer graph algebra}
    Let $\Gamma$ be a Brauer graph and $Q_{\Gamma}=(Q_{0},Q_{1})$ be its associated quiver. The \textit{Brauer graph algebra} $B_{\Gamma}$ associated to $\Gamma$ is the path algebra $kQ_{\Gamma}/I_{\Gamma}$ where the ideal of relations $I_{\Gamma}$ is generated by three types of relations given by

    \smallskip
    
    \begin{enumerate}[label=(\Roman*)]
        \item \label{item1:Brauer graph algebra}\[C_{v}^{i}-C_{v'}^{i}\]
        
        for any $i\in Q_{0}$ and for any special $i$-cycle $C_{v}^{i}$ and $C_{v'}^{i}$ at $v$ and $v'$ respectively, where $v$ and $v'$ are not incident to a single edge.

        \item \label{item2:Brauer graph algebra}\[\alpha_{n}C_{v}^{i}\]

        for any $i\in Q_{0}$ and for any special $i$-cycle $C_{v}^{i}=\alpha_{1}\ldots \alpha_{n}$ at $v$, where $v$ is not incident to a single edge.

        \item \label{item3:Brauer graph algebra}\[\beta\alpha\]

        for any $\alpha,\beta\in Q_{1}$ such that $\beta\alpha$ is not a subpath of any special cycles.
    \end{enumerate}
\end{defn}

\medskip

In general, these relations are not minimal as we can see in the following example.

\medskip

\begin{ex} \label{ex:relations of Brauer graph algebra}
    Let $\Gamma$ be the Brauer graph defined in Example \ref{ex:Brauer graph}. The set of relations of its associated Brauer graph algebra is given by

    \smallskip

    \begin{enumerate}[label=(\Roman*)]
    \item \label{I} $\alpha_{4}\alpha_{3}\alpha_{2}\alpha_{1}-\beta_{2}\beta_{1}$, $\alpha_{1}\alpha_{4}\alpha_{3}\alpha_{2}-\beta_{1}\beta_{2}$, $\alpha_{2}\alpha_{1}\alpha_{4}\alpha_{3}-\alpha_{3}\alpha_{2}\alpha_{1}\alpha_{4}$;
    \item \label{II} $\alpha_{1}\alpha_{4}\alpha_{3}\alpha_{2}\alpha_{1}$, $\beta_{1}\beta_{2}\beta_{1}$, $\alpha_{2}\alpha_{1}\alpha_{4}\alpha_{3}\alpha_{2}$, $\beta_{2}\beta_{1}\beta_{2}$, $\alpha_{3}\alpha_{2}\alpha_{1}\alpha_{4}\alpha_{3}$, $\alpha_{4}\alpha_{3}\alpha_{2}\alpha_{1}\alpha_{4}$;
    \item \label{III} $\beta_{1}\alpha_{4}$, $\alpha_{1}\beta_{2}$, $\alpha_{2}\beta_{1}$, $\beta_{2}\alpha_{1}$, $\alpha_{4}\alpha_{2}$, $\alpha_{3}^{2}$.
    \end{enumerate}

    \smallskip

    \noindent In this case the relations \ref{II} are determined by \ref{I} and \ref{III}.
\end{ex}

\medskip

Let us recall now the link between gentle algebras and Brauer graph algebras via the trivial extension. We refer for instance to \cite[Definition 3.1]{Schroll} for the definition of a gentle algebra.

\medskip

\begin{defn} \label{def:admissible cut}
    Let $Q$ be a quiver associated to a Brauer graph $\Gamma$. An \textit{admissible cut} $C$ of $Q$ is a set of arrows of $Q$ containing exactly one arrow for each special cycle $C_{v}$ (up to cyclic permutation) for any vertex $v$ in $\Gamma_{0}$ that is not incident to a single edge.
\end{defn}

\medskip 

\begin{thm}[Schroll {\cite[Theorem 3.13 and Corollary 3.14]{Schroll}}] \label{thm:Schroll}
    Let $B=kQ/I$ be a Brauer graph algebra of Brauer graph $\Gamma$ and $C$ be an admissible cut of $Q$. Then

    \smallskip

    \begin{itemize}[label=\textbullet, font=\tiny]
        \item The path algebra $B_{C}:=kQ/<I\cup C>$ is a gentle algebra, where $<I\cup C>$ is the ideal of $kQ$ generated by $I\cup C$.

        \item $B$ is the trivial extension of $B_{C}$.
    \end{itemize}

    \smallskip

    \noindent Conversely, for every gentle algebra $\Lambda$, 
    there exists a unique Brauer graph algebra $B=kQ/I$ such that $\Lambda=B_{C}$ for some admissible cut $C$ of $Q$. Moreover, this Brauer graph algebra is the trivial extension $\mathrm{Triv}(\Lambda)$ of $\Lambda$.
\end{thm}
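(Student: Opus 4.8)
The plan is to establish the two bullet points and then the converse, using throughout the explicit combinatorics of $Q_\Gamma$ and the relations \ref{item1:Brauer graph algebra}--\ref{item3:Brauer graph algebra} of Definition \ref{def:Brauer graph algebra}. First I would prove that $B_C$ is gentle. Writing $Q_C$ for the quiver obtained from $Q$ by deleting the arrows of $C$, the point is that $kQ/\langle I\cup C\rangle\cong kQ_C/I_C$. Because an admissible cut contains exactly one arrow of each special cycle, every full special cycle $C_v^i$ lies in $\langle C\rangle$; hence the relations of type \ref{item1:Brauer graph algebra} and \ref{item2:Brauer graph algebra} are already consequences of $C$, and $\langle I\cup C\rangle=\langle\{\text{relations of type \ref{item3:Brauer graph algebra}}\}\cup C\rangle$. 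Thus $I_C$ is generated by the monomial length-two relations $\beta\alpha$, which is the relation-generation condition for gentleness. The remaining conditions are read off from the construction of $Q_\Gamma$: each edge $i$ of $\Gamma$ has exactly two endpoints, so at most two arrows of $Q$ start and at most two stop at the vertex $i$, and the unique direct-successor rule in the cyclic orderings guarantees that at each vertex there is at most one composable pair lying on a special cycle and at most one pair that is a relation. Deleting the arrows of $C$ preserves all these bounds, so $B_C$ is gentle.

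Next I would show $B\cong\mathrm{Triv}(B_C)$. Inside $B$, the nonzero paths avoiding the arrows of $C$ span a subalgebra isomorphic to $B_C$, while the paths through some arrow of $C$ span the two-sided ideal $J=\langle C\rangle$, giving a vector-space decomposition $B=B_C\oplus J$. Two points remain: (i) $J^2=0$, since a nonzero path of $B$ is a subpath of a single special cycle and an admissible cut meets each special cycle in exactly one arrow, so no nonzero path can contain two arrows of $C$; and (ii) $J\cong D(B_C)$ as $B_C$-$B_C$-bimodules. For (ii) I would use that $B$ is symmetric, with symmetrizing form sending each full special cycle (a socle element) to $1$: the resulting nondegenerate pairing sends a path $p$ through a cut arrow and a $C$-avoiding path $q$ to $1$ exactly when $qp$ completes to a full special cycle, so it matches a basis of $J$ bijectively with the dual basis of $B_C$ compatibly with both module actions. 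Together, (i) and (ii) show that the multiplication on $B=B_C\oplus J$ is precisely $(a,\phi)(b,\psi)=(ab,a\psi+\phi b)$, that is, $B\cong\mathrm{Triv}(B_C)$.

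For the converse, starting from a gentle algebra $\Lambda=kQ'/I'$ I would reconstruct a Brauer graph $\Gamma$. The permitted threads of $\Lambda$ (the maximal paths none of whose length-two subpaths lie in $I'$) are closed up into oriented cycles by adjoining one new arrow to each; declaring these cycles to be the special cycles fixes a cyclic ordering at a set of vertices and hence a Brauer graph $\Gamma$, with quiver $Q\supseteq Q'$ and Brauer graph algebra $B=kQ/I$. Taking $C$ to be the set of adjoined arrows yields an admissible cut with $B_C=\Lambda$. For uniqueness: any Brauer graph algebra $B'=kQ''/I''$ with $B'_{C'}\cong\Lambda$ must have $Q''$ equal to $Q'$ together with one arrow per special cycle, and requiring that deleting $C'$ return $I'$ forces the cyclic orderings, hence $\Gamma$ and $B'\cong B$. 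Finally the second bullet, applied to $B$ and $C$, gives $B=\mathrm{Triv}(B_C)=\mathrm{Triv}(\Lambda)$.

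The step I expect to be the main obstacle is (ii), the bimodule isomorphism $J\cong D(B_C)$. One must verify that the symmetrizing form of $B$ restricts to a perfect pairing between $J$ and the subalgebra $B_C$ that intertwines the left and right $B_C$-actions, which amounts to a careful bookkeeping of exactly which paths through the cut arrows are nonzero in $B$ and matching each of them bijectively with the path completing it to a full special cycle. By comparison, the gentleness check and the reconstruction of $\Gamma$ are essentially combinatorial; the one extra subtlety in the converse is the uniqueness of $B$, which needs the observation that the cyclic orderings at every vertex are forced by the permitted and forbidden threads of $\Lambda$, so that any admissible cut presentation of $\Lambda$ comes from the same Brauer graph.
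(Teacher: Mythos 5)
Note first that the paper contains no proof of this statement: it is imported verbatim from Schroll \cite{Schroll}, so your proposal can only be compared with the published argument behind that citation. There your route genuinely differs at the central step. Schroll works from the gentle side: she computes a presentation by quiver and relations of $\mathrm{Triv}(\Lambda)$ (via known presentations of trivial extensions, one new arrow closing up each maximal permitted thread of $\Lambda$ into a cycle), reads off the Brauer graph from these cycles, and obtains the cut-gives-gentle direction essentially as in your first paragraph. You instead work internally in $B$: split $B=B_{C}\oplus J$ with $J=\langle C\rangle$, get $J^{2}=0$ because a nonzero path in $B$ is a subpath of a full special cycle and hence contains at most one cut arrow, and identify $J\cong D(B_{C})$ as bimodules via the symmetrizing form $\varphi$ of $B$. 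That mechanism does work, and for the reason you anticipate in your last paragraph: since the socle of $B$ is spanned by the full special cycles, each of which contains exactly one cut arrow, $\varphi$ vanishes both on $B_{C}\cdot B_{C}$ and (by $J^{2}=0$) on $J\cdot J$, so nondegeneracy of $\varphi$ forces the pairing $B_{C}\times J\to k$ to be perfect, and the bimodule compatibility of $p\mapsto\varphi(-\,p)$ is exactly the symmetry $\varphi(xy)=\varphi(yx)$; moreover any algebra of the form $\Lambda\oplus M$ with $M$ a square-zero ideal isomorphic to $D\Lambda$ as a bimodule is automatically the trivial extension, so (i) and (ii) really do suffice. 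Your approach buys a self-contained proof of the second bullet that never leaves $B$; Schroll's computation buys the converse almost for free.

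Two smaller points. Your uniqueness argument (forcing the cyclic orderings from the permitted and forbidden threads) is redundant: once the second bullet is available, any Brauer graph algebra $B'$ with $B'_{C'}\cong\Lambda$ satisfies $B'\cong\mathrm{Triv}(B'_{C'})\cong\mathrm{Triv}(\Lambda)$, which is precisely the uniqueness asserted, so the forced-orderings observation is only needed if one wants uniqueness of the Brauer graph itself rather than of the algebra. And two details deserve a sentence each in a complete write-up: in the gentleness check, that $\langle I\cup C\rangle$ remains admissible (no relation-free cycle survives in the cut quiver, since every special cycle loses its cut arrow, so $B_{C}$ is finite dimensional); and in the reconstruction, that trivial threads and truncated edges must be included in the bookkeeping when closing up maximal paths --- this is where the degenerate cases of the correspondence hide, and it is the one place where your sketch, like the statement itself, leans on conventions rather than argument.
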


\medskip

\subsection{Generalized Kauer moves}

\medskip

Let us recall that a Kauer move at an edge $s$ of a Brauer graph is a local move that can be described as follows \cite{Kauer}

\smallskip

\begin{figure}[H]
    \centering
        \begin{tikzpicture}[scale=0.8]
        \tikzstyle{vertex}=[draw, circle, minimum size=0.2cm]
        \begin{scope}[xshift=-7cm]
        \node[vertex] (1) at (-2,0) {};
        \node[vertex] (2) at (2,0) {};
        \node[vertex] (3) at (0,1.7) {};
        \node[vertex] (4) at (0,-1.7) {};
        \draw[very thick,Aquamarine] (1)--(2) node[above, midway]{$s$};
        \draw (1)--(3);
        \draw (2)--(4);
        \draw[->] (-1,0.1) arc(0:30:1);
        \draw[->] (1,-0.1) arc(180:210:1);
        \end{scope}
        \draw[->] (-4,0)--(-1,0) node[above, midway]{Kauer move} node[below,midway]{$(i)$};
        \begin{scope}[xshift=2cm]
        \node[vertex] (1) at (-2,0) {};
        \node[vertex] (2) at (2,0) {};
        \node[vertex] (3) at (0,1.7) {};
        \node[vertex] (4) at (0,-1.7) {};
        \draw[very thick,Aquamarine] (3)--(4) node[right, midway]{$s'$};
        \draw (1)--(3);
        \draw (2)--(4);
        \draw[->] (-0.65,1) arc(230:265:0.95);
        \draw[->] (0.65,-1) arc(50:85:0.95);
        \end{scope}
        
        \begin{scope}[xshift=-7cm, yshift=-5cm]
        \node[vertex] (1) at (-2,0) {};
        \node[vertex] (2) at (2,0) {};
        \node[vertex] (3) at (0,1.7) {};
        \draw[very thick,Aquamarine] (1)--(2) node[above, midway]{$s$};
        \draw (1)--(3);
        \draw[->] (-1,0.1) arc (0:30:1);
        \end{scope}
        \draw[->] (-4,-5)--(-1,-5) node[above, midway]{Kauer move} node[below,midway]{$(ii)$};
        \begin{scope}[xshift=2cm, yshift=-5cm]
        \node[vertex] (1) at (-2,0) {};
        \node[vertex] (2) at (2,0) {};
        \node[vertex] (3) at (0,1.7) {};
        \draw[very thick,Aquamarine] (3)--(2) node[above right, midway]{$s'$};
        \draw (1)--(3);
        \draw[->] (-0.65,1) arc(230:315:0.95);
        \end{scope}

        \begin{scope}[xshift=-7cm, yshift=-10cm]
        \node[vertex] (1) at (-2,0) {};
        \node[vertex] (2) at (2,0) {};
        \draw[very thick,Aquamarine] (2.160) arc(-105:-434:0.8) node[above, midway]{$s$};
        \draw (1)--(2);
        \draw[<-] (1,0.1) arc(180:150:1);
        \end{scope}
        \draw[->] (-4,-10)--(-1,-10) node[above, midway]{Kauer move} node[below,midway]{$(iii)$};
        \begin{scope}[xshift=2cm, yshift=-10cm]
        \node[vertex] (1) at (-2,0) {};
        \node[vertex] (2) at (2,0) {};
        \draw[very thick,Aquamarine] (1.160) arc(-105:-434:0.8) node[above, midway]{$s'$};
        \draw (1)--(2);
        \draw[->] (-1,0.1) arc(0:30:1);
        \end{scope}
        \end{tikzpicture}
    \caption{Kauer move at the edge $s$}
    \label{Kauer moves}
\end{figure}
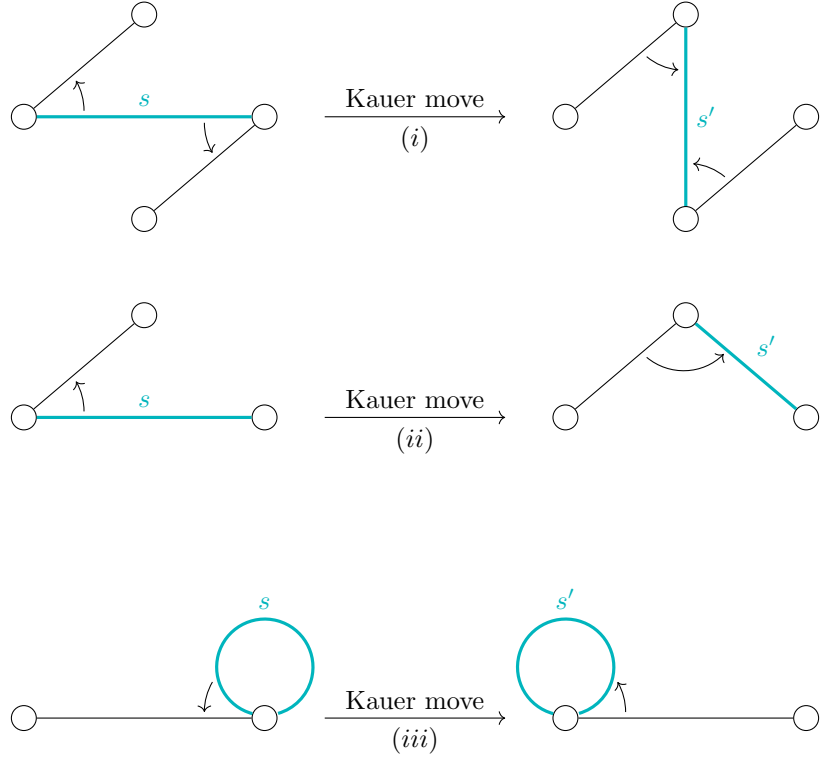

\smallskip

To generalize this notion of Kauer moves, we will use an alternative definition of a Brauer graph in terms of half-edges, which turns out to be equivalent to Definition \ref{def:Brauer graph}. This notion of Brauer graph is already used in combinatorial topology (see \cite{Lazarus} for instance) and is called an \textit{oriented map} in this context. In what follows, $\mathfrak{S}_{H}$ denotes the symmetric group of $H$.

\medskip

\begin{defn} \label{def:Brauer graph H}
A \textit{Brauer graph} is the data $\Gamma=(H,\iota,\sigma)$ where 

\smallskip

\begin{itemize}[label=\textbullet, font=\tiny]
\item $H$ is the set of half-edges,
\item $\iota\in\mathfrak{S}_{H}$ without fixed point satisfies $\iota^{2}=\id{H}$ : it is called the \textit{pairing},
\item $\sigma\in\mathfrak{S}_{H}$ is called the \textit{orientation}.
\end{itemize}

\end{defn}

\medskip

To the data $\Gamma=(H,\iota,\sigma)$, we can naturally associate a graph as follows : the vertex set is $\Gamma_{0}=H/\sigma$, the edge set is $\Gamma_{1}=H/\iota$ and the source map is the natural projection $s:H\rightarrow H/\sigma$. Moreover, the permutation $\sigma$ induces a cyclic ordering of the edges around each vertex. We keep previous convention for the orientation of a Brauer graph (see before Example \ref{ex:Brauer graph}).

\medskip

\begin{ex}
Let $\Gamma$ be the Brauer graph defined in Example \ref{ex:Brauer graph}. Using Definition \ref{def:Brauer graph H}, this Brauer graph may be seen as $\Gamma=(H,\iota,\sigma)$ 

\smallskip

    \begin{figure}[H]
        \centering
        \begin{tikzpicture}[scale=1.1]
        \tikzstyle{vertex}=[minimum size=0.7cm,circle,draw,scale=0.8]
        \node[vertex] (a) at (-2,0) {a};
        \node[vertex] (b) at (2,0) {b};
        \draw[Red] (a)--(b) node[near end, above, scale=0.75]{$1^{-}$} node[near start, below, scale=0.75]{$1^{+}$};
        \draw[VioletRed] (a) to[bend left=45] node[midway, above, scale=0.75]{$2^{+}$} (-0.1,0.1); 
        \draw[VioletRed] (0,-0.1) to[bend right=45] node[midway, below, scale=0.75]{$2^{-}$} (b) ;
        \draw[Orange] (a.90) arc(25:333:0.6) node[near start, above, scale=0.75]{$3^{+}$} node[near end, below, scale=0.75]{$3^{-}$};
        \end{tikzpicture}
    \end{figure}

\smallskip

\noindent where $H=\{1^{+}, 1^{-}, 2^{+}, 2^{-}, 3^{+}, 3^{-}\}$ and $j^{-}$ denotes $\iota j^{+}$. Moreover, $\sigma=(1^{+} \ 2^{+} \ 3^{+} \ 3^{-})(1^{-} \ 2^{-})$. Each cycle in $\sigma$ represents a cyclic ordering of the half edges around a vertex.
\end{ex}

\medskip

\begin{rem}
    Interpreting a Brauer graph as a ribbon graph, one can associate to a Brauer graph a surface with boundary by means of a ribbon surface \cite[Subsection 2.7]{Schroll}. We can check that the set of faces of its surface corresponds to $H/\sigma \iota$.
\end{rem}

\medskip

For a fixed set of half-edges $H$ and a fixed pairing $\iota\in\mathfrak{S}_{H}$, we can define a left and a right action of $\mathfrak{S}_{H}$ on the set of Brauer graphs of the form $(H,\iota,\sigma)$ with $\sigma\in\mathfrak{S}_{H}$ given by

\begin{equation*}
    \begin{gathered}
    \begin{aligned}
    L:\mathfrak{S}_{H}&\rightarrow \{(H,\iota,\sigma')\,\vert \, \sigma'\in\mathfrak{S}_{H}\} \\
    \tau &\mapsto L_{\tau}(H,\iota,\sigma)=(H,\iota,\tau\sigma)
    \end{aligned} \\
    \begin{aligned}
        R:\mathfrak{S}_{H}&\rightarrow \{(H,\iota,\sigma')\,\vert \, \sigma'\in\mathfrak{S}_{H}\} \\
    \tau &\mapsto R_{\tau}(H,\iota,\sigma)=(H,\iota,\sigma\tau)
    \end{aligned}
    \end{gathered}
\end{equation*}

\noindent For $h_{1},h_{2}\in H$, the left and right action of the transposition $\tau=(h_{1} \ h_{2})$ on the Brauer graph $(H,\iota,\sigma)$ depends whether $h_{1}$ and $h_{2}$ are in the same $\sigma$-orbit or not. If they are in the same $\sigma$-orbit, we may see these actions as cutting actions. Else, we may see these actions as pasting actions. One can represent the left and right action of $\tau=(h_{1} \ h_{2})$ as follows

\smallskip

\begin{figure}[H]
\centering
\begin{tikzpicture}[scale=0.85]
        \tikzstyle{vertex}=[draw, circle, minimum size=0.2cm]
        \begin{scope}[xshift=-3cm]
        \node[vertex] (1) at (0,0) {};
        \draw[dashed,very thick,Aquamarine] (1) edge (1.75,0.8) edge (-1.75,-0.8);
        \draw (1)--(2,0);
        \draw (1)--(-2,0);
        \draw (1)--(1,1.5) node[above right]{$h_{1}$};
        \draw (1)--(-1,1.5);
        \draw (1)--(-1,-1.5) node[below left]{$h_{2}$};
        \draw (1)--(1,-1.5);
        \draw[->] (1,0.1) arc(5:50:1);
        \draw[->] (-1,-0.1) arc(185:230:1);
        \end{scope}
        \draw[<->] (0,0)--(3,0) node[above, midway]{$L_{(h_{1},h_{2})}(H,\iota,\sigma)$};
        \begin{scope}[xshift=6cm]
        \node[vertex] (1) at (-0.55,0) {};
        \node[vertex] (2) at (0.55,0) {};
        \draw[dashed,very thick,Aquamarine] (1)--(2);
        \draw (2)--(2,0) node[above, midway]{$\sigma^{-1}h_{1}$};
        \draw (1)--(-2,0) node[below, midway]{$\sigma^{-1}h_{2}$};
        \draw (1)--(0,1.5) node[above left]{$h_{1}$};
        \draw (1)--(-1.25,1.25);
        \draw (2)--(0,-1.5) node[below right]{$h_{2}$};
        \draw (2)--(1.25,-1.25) ;
        \end{scope}
        \begin{scope}[xshift=-3cm, yshift=-5cm]
        \node[vertex] (1) at (0,0) {};
        \draw[dashed,very thick,Aquamarine] (1) edge (0,1.75) edge (0,-1.75);
        \draw (1)--(2,0);
        \draw (1)--(-2,0);
        \draw (1)--(1,1.5) node[above right]{$h_{1}$};
        \draw (1)--(-1,1.5);
        \draw (1)--(-1,-1.5) node[below left]{$h_{2}$};
        \draw (1)--(1,-1.5);
        \draw[->] (0.5,0.88) arc(60:120:1);
        \draw[->] (-0.5,-0.88) arc(-120:-60:1);
        \end{scope}
        \draw[<->] (0,-5)--(3,-5) node[above, midway]{$R_{(h_{1},h_{2})}(H,\iota,\sigma)$};
        \begin{scope}[xshift=6cm, yshift=-5cm]
        \node[vertex] (1) at (-0.55,0) {};
        \node[vertex] (2) at (0.55,0) {};
        \draw[dashed,very thick,Aquamarine] (1)--(2);
        \draw (2)--(2,0);
        \draw (1)--(-2,0);
        \draw (2)--(1.2,1.5) node[above right]{$h_{1}$};
        \draw (1)--(-1.2,1.5) node[above]{$\sigma h_{1}$};
        \draw (1)--(-1.2,-1.5) node[below left]{$h_{2}$};
        \draw (2)--(1.2,-1.5) node[below]{$\sigma h_{2}$};
        \end{scope}
        \end{tikzpicture}
        \caption{Left and right action of $\tau=(h_{1} \ h_{2})$}
        \label{Left and right action}
\end{figure}
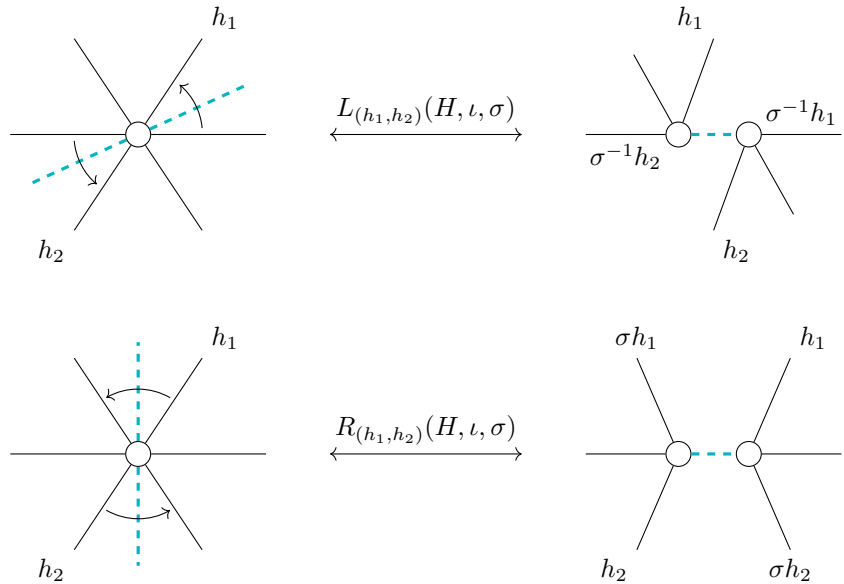

\smallskip

We now use these cutting and pasting actions to define a generalized Kauer move on any finite number of edges of a Brauer graph. For this, we will separate these edges into families of successive half-edges called sectors.

\medskip

\begin{defn} \label{def:sector}
    Let $\Gamma=(H,\iota,\sigma)$ be a Brauer graph and $H'$ be a subset of $H$ stable under $\iota$. 

    \smallskip

    \begin{itemize}[label=\textbullet, font=\tiny]
    \item We say that $(h,r)\in H\times \Z_{\ge 0}$ is a \textit{sector} in $\Gamma$ of elements in $H'$ if $r+1$ is the smallest integer $r'\ge0$ such that $\sigma^{r'}h\notin H'$.
    
    \item We say that $(h,r)\in H\times \Z_{\ge 0}$ is a \textit{maximal sector} in $\Gamma$ of elements in $H'$ if $(h,r)$ is a sector such that $\sigma^{-1}h\notin H'$.
    \end{itemize}

    \noindent We denote respectively $\mathrm{sect}(H',\sigma)$ and $\mathrm{Sect}(H',\sigma)$ the set of sectors and maximal sectors in $\Gamma$ of elements in $H'$.
\end{defn}

\medskip

Let $\Gamma=(H,\iota,\sigma)$ be a Brauer graph and $H'$ be a subset of $H$ stable under $\iota$. Thanks to the previous definition, we can define two permutations of $H$

\begin{equation*}
    \tau^{\Gamma}_{\mathrm{cut}}=\prod_{(h,r)\in\mathrm{Sect}(H',\sigma)}(h \ \ \ \sigma^{r+1}h) \qquad \mbox{and} \qquad \tau^{\Gamma}_{\mathrm{paste}}=\prod_{(h,r)\in\mathrm{Sect}(H',\sigma)}(\sigma^{r}h \ \ \ \iota\sigma^{r+1}h)
\end{equation*}

\noindent Note that these two products are well-defined since the transpositions have disjoint support. Using the cutting and pasting actions induced by the left and right actions $L$ and $R$ of $\mathfrak{S}_{H}$, we define a new Brauer graph 

\[\mu^{+}_{H'}(\Gamma)=R_{\tau^{\Gamma}_{\mathrm{paste}}}L_{\tau^{\Gamma}_{\mathrm{cut}}}(\Gamma)=(H,\iota,\tau^{\Gamma}_{\mathrm{cut}}\sigma\tau^{\Gamma}_{\mathrm{paste}})\]

\noindent that is locally obtained from $\Gamma$ as follows

\smallskip

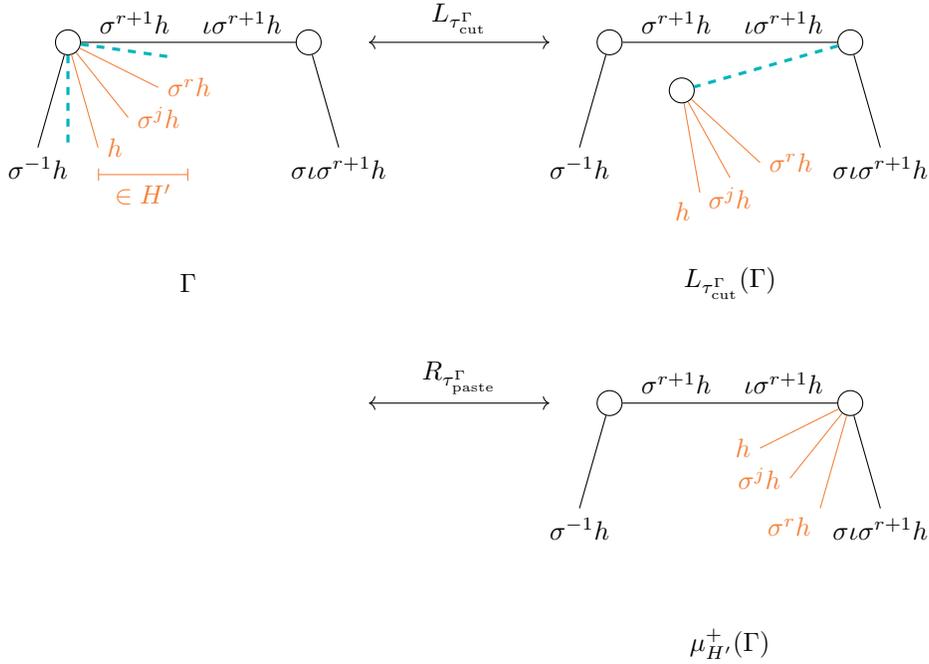
\begin{figure}[H]
\centering
        \begin{tikzpicture}[scale=0.8]
        \tikzstyle{vertex}=[draw, circle, minimum size=0.2cm]
        \begin{scope}[xshift=-3cm]
        \node[vertex] (1) at (-2,0) {};
        \node[vertex] (2) at (2,0) {};
        \draw (1)--(2) node[near start, above]{$\sigma^{r+1}h$} node[near end, above]{$\iota\sigma^{r+1}h$};
        \draw (1)--(-2.5,-1.75) node[below]{$\sigma^{-1}h$};
        \draw[Orange] (1)--(-1.5,-1.75) node[below, right]{$h$};
        \draw[Orange] (1)--(-1,-1.25) node[below, right]{$\sigma^{j}h$};
        \draw[Orange] (1)--(-0.5,-0.75) node[below, right]{$\sigma^{r}h$};
        \draw (2)--(2.5,-1.75) node[below]{$\sigma\iota\sigma^{r+1}h$};
        \draw[dashed,very thick,Aquamarine] (1) edge (-2,-1.75) edge (-0.25,-0.25);
        \draw (0,-4) node{$\Gamma$};
        \draw[|-|, Orange] (-1.5,-2.2)--(0,-2.2) node[below, midway]{$\in H'$};
        \end{scope} 
        
        \draw[<->] (0,0)--(3,0) node[above, midway]{$L_{\tau_{\mathrm{cut}}^{\Gamma}}$};
        
        \begin{scope}[xshift=6cm]
        \node[vertex] (1) at (-2,0) {};
        \node[vertex] (2) at (2,0) {};
        \node[vertex] (3) at (-0.8,-0.8) {};
        \draw (1)--(2) node[near start, above]{$\sigma^{r+1}h$} node[near end, above]{$\iota\sigma^{r+1}h$};
        \draw (1)--(-2.5,-1.75) node[below]{$\sigma^{-1}h$};
        \draw[Orange] (3)--(-0.5,-2.5) node[below left]{$h$};
        \draw[Orange] (3)--(0,-2.25) node[below]{$\sigma^{j}h$};
        \draw[Orange] (3)--(0.5,-2) node[below, right]{$\sigma^{r}h$};
        \draw (2)--(2.5,-1.75) node[below]{$\sigma\iota\sigma^{r+1}h$};
        \draw[dashed,very thick,Aquamarine] (2)--(3);
        \draw (0,-4) node{$L_{\tau^{\Gamma}_{\mathrm{cut}}}(\Gamma)$};
        \end{scope}

        \draw[<->] (0,-6)--(3,-6) node[above, midway]{$R_{\tau_{\mathrm{paste}}^{\Gamma}}$};

        \begin{scope}[xshift=6cm, yshift=-6cm]
        \node[vertex] (1) at (-2,0) {};
        \node[vertex] (2) at (2,0) {};
        \draw (1)--(2) node[near start, above]{$\sigma^{r+1}h$} node[near end, above]{$\iota\sigma^{r+1}h$};
        \draw (1)--(-2.5,-1.75) node[below]{$\sigma^{-1}h$};
        \draw[Orange] (2)--(1.5,-1.75) node[below left]{$\sigma^{r}h$};
        \draw[Orange] (2)--(1,-1.25) node[below, left]{$\sigma^{j}h$};
        \draw[Orange] (2)--(0.5,-0.75) node[below, left]{$h$};
        \draw (2)--(2.5,-1.75) node[below]{$\sigma\iota\sigma^{r+1}h$};
        \draw (0,-4) node{$\mu^{+}_{H'}(\Gamma)$};
        \end{scope}
        \end{tikzpicture}
\caption{Generalized Kauer move of $H'$}
\label{Generalized Kauer move}
\end{figure}

\medskip

\begin{defn} \label{def:generalized Kauer move}
    The \textit{generalized Kauer move} of $H'$ in $\Gamma$ is the local move defined previously, applied to the half-edges of $H'$ to obtain $\mu_{H'}^{+}(\Gamma)$ from $\Gamma$.
\end{defn}

\medskip

Using Figure \ref{Generalized Kauer move}, it is not hard to see that the following proposition holds.

\medskip

\begin{prop} \label{prop:usual Kauer move}
Let $\Gamma=(H,\iota,\sigma)$ be a Brauer graph. If $H'=\{h,\iota h\}$ for some $h\in H$, then the generalized Kauer move of $H'$ in $\Gamma$ is exactly the standard Kauer move at the edge associated to $h$ defined previously in Figure \ref{Kauer moves}.
\end{prop}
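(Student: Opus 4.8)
The plan is to specialize the whole construction of $\mu^{+}_{H'}$ to the two-element set $H' = \{h, \iota h\}$ and then read off the resulting local picture from Figure~\ref{Generalized Kauer move}, matching it against the three configurations of Figure~\ref{Kauer moves}. The first step is to compute the set of maximal sectors $\mathrm{Sect}(H', \sigma)$. Since $H'$ has only the two elements $h$ and $\iota h$, a maximal sector $(g,r)$ must start at some $g \in \{h, \iota h\}$ with $\sigma^{-1}g \notin H'$, and its length $r+1$ simply counts how many consecutive $\sigma$-iterates of $g$ stay in $H'$. The two elements can belong to a common run only when they are cyclically adjacent in a shared $\sigma$-orbit; otherwise each of $h$ and $\iota h$ spans a run of length one. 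Once the sectors are known, I would substitute them into the definitions of $\tau^{\Gamma}_{\mathrm{cut}}$ and $\tau^{\Gamma}_{\mathrm{paste}}$, form $\sigma' = \tau^{\Gamma}_{\mathrm{cut}}\sigma\tau^{\Gamma}_{\mathrm{paste}}$, and track the images of the finitely many half-edges involved.

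This bookkeeping organizes itself along the trichotomy of Figure~\ref{Kauer moves}. In case $(i)$, where $h$ and $\iota h$ lie in two distinct $\sigma$-orbits, each of degree at least two, one checks that $\sigma^{\pm 1}h,\ \sigma^{\pm 1}\iota h \notin H'$, so that $\mathrm{Sect}(H',\sigma) = \{(h,0),(\iota h,0)\}$ and hence
\[
\tau^{\Gamma}_{\mathrm{cut}} = (h \ \ \sigma h)(\iota h \ \ \sigma \iota h), \qquad \tau^{\Gamma}_{\mathrm{paste}} = (h \ \ \iota \sigma h)(\iota h \ \ \iota \sigma \iota h).
\]
Feeding these into $\sigma'$ and following the images of $h,\iota h,\sigma h,\sigma\iota h$ recovers exactly the reattachment of Figure~\ref{Kauer moves}$(i)$. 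Case $(iii)$, a loop, is the situation where $h$ and $\iota h$ share a $\sigma$-orbit: when their half-edges are cyclically adjacent the two runs merge into a single maximal sector of length two, and I would verify that the corresponding single pair of transpositions reproduces Figure~\ref{Kauer moves}$(iii)$.

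Case $(ii)$ is where I expect the main subtlety, and it will be the technical crux. If one endpoint of the edge—say the vertex carrying $\iota h$—is incident to a single edge, then $\iota h$ is a fixed point of $\sigma$ and its whole $\sigma$-orbit $\{\iota h\}$ is contained in $H'$. The run starting at $\iota h$ then never exits $H'$, so it produces no sector, and the maximality condition $\sigma^{-1}\iota h \notin H'$ also fails; thus $\mathrm{Sect}(H',\sigma)$ collapses to the single sector $\{(h,0)\}$, giving $\tau^{\Gamma}_{\mathrm{cut}} = (h \ \ \sigma h)$ and $\tau^{\Gamma}_{\mathrm{paste}} = (h \ \ \iota \sigma h)$. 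The plan is to confirm that discarding the degenerate half-edge in this way is precisely what turns the two-sided picture of case $(i)$ into the one-sided picture of Figure~\ref{Kauer moves}$(ii)$, and to check the analogous degeneration for a loop sitting at a univalent vertex.

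Finally, I would confirm that $\sigma'$ agrees with $\sigma$ on every half-edge outside the supports of $\tau^{\Gamma}_{\mathrm{cut}}$ and $\tau^{\Gamma}_{\mathrm{paste}}$, so that the move is genuinely local, and that $\iota$ is left unchanged, so that $\mu^{+}_{H'}(\Gamma)$ and the standard Kauer move yield the \emph{same} Brauer graph $(H,\iota,\sigma')$ rather than merely isomorphic ones. The bulk of the argument is the routine tracking of these transposition products; the only conceptual care needed lies in the degenerate sectors arising at univalent vertices and at loops, which is exactly where the three cases of Figure~\ref{Kauer moves} genuinely differ.
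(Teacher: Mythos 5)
Your proposal is correct and takes essentially the same route as the paper, which offers no written proof at all but simply asserts that the proposition follows from the local picture of Figure~\ref{Generalized Kauer move}: specializing the sector machinery to $H'=\{h,\iota h\}$ and matching the resulting transposition products against the three configurations of Figure~\ref{Kauer moves} is exactly the verification the paper leaves to the reader. Your explicit treatment of the degenerate runs --- the univalent vertex in case $(ii)$, where $\sigma(\iota h)=\iota h$ forces $\mathrm{Sect}(H',\sigma)=\{(h,0)\}$, and the merging of the two runs into the single maximal sector $(h,1)$ when $\sigma h=\iota h$ at a loop --- correctly fills in the only points where the figure alone glosses over the bookkeeping.
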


\medskip

Note that the generalized Kauer move of $H'$ is not necessarily the succession of the standard Kauer moves at each edge in $H'/\iota$ as pictured in the following example.

\medskip

\begin{ex} \label{ex:succession of standard Kauer moves 1} Let us consider the following Brauer graph $\Gamma=(H,\iota,\sigma)$

\smallskip

\begin{figure}[H]
    \centering
    \begin{tikzpicture}
        \tikzstyle{vertex}=[draw, circle, minimum size=0.2cm]
        \node[vertex] (1) at (-2,0) {};
        \node[vertex] (2) at (2,0) {};
        \draw[Red] (1)--(2) node[near start, below]{$1^{+}$} node[near end, above]{$1^{-}$};
        \draw[VioletRed] (1) to[bend left=45] node[midway, below]{$2^{+}$} (0,0.1);
        \draw[VioletRed] (0.1,-0.1) to[bend right=45] node[midway, above]{$2^{-}$} (2);
        \draw[Orange] (1) to[bend left=60] node[near start, above]{$3^{+}$} node[near end, above]{$3^{-}$} (2);
        \draw[Goldenrod] (1) to[bend right=60] node[near start, below]{$4^{+}$} node[near end, below]{$4^{-}$} (2);
        \end{tikzpicture}
    \label{Example 1}
\end{figure}

\smallskip

\noindent where $j^{-}$ denotes $\iota j^{+}$ and $\sigma=(1^{+} \ 2^{+} \ 3^{+} \ 4^{+})(1^{-} \ 2^{-} \ 4^{-} \ 3^{-})$. We set $H'=\{3^{+},3^{-}, 4^{+}, 4^{-}\}$. In this case, the maximal sectors of elements in $H'$ are $(3^{+},1)$ and $(4^{-},1)$. Thus, the Brauer graph obtained from $\Gamma$ by a generalized Kauer move of $H'$ is given by $\mu^{+}_{H'}(\Gamma)=(H,\iota,\sigma_{H'})$ where 

\[\sigma_{H'}=(3^{+} \ 1^{+})(4^{-} \ 1^{-})\sigma(4^{+} \ 1^{-})(3^{-} \ 1^{+})=(3^{+} \ 4^{+} \ 2^{-} \ 1^{-})(4^{-} \ 3^{-} \ 2^{+} \ 1^{+}) \ \]

\noindent Moreover, computing first the standard Kauer move at 3 and then at 4, we obtain the following Brauer graph $(H,\iota, \sigma_{4,3})$ where

\begin{equation*}
    \begin{aligned}
        \sigma_{4,3}
        &=(4^{+} \ 1^{+} )(4^{-} \ 3^{+})\left[(3^{+} \ 4^{+})(3^{-} \ 1^{-})\sigma(3^{+} \ 4^{-})(3^{-} \ 1^{+})\right](4^{+} \ 1^{-})(4^{-} \ 3^{-}) \\
        &=(4^{+} \ 2^{-} \ 3^{+} \ 1^{-})(4^{-} \ 2^{+} \ 1^{+} \ 3^{-})
    \end{aligned}
\end{equation*}

\noindent Similarly, computing first the standard Kauer move at 4 and then at 3, we obtain the following Brauer graph $(H,\iota, \sigma_{3,4})$ where

\[\sigma_{3,4}=(3^{+} \ 2^{-} \ 1^{-} \ 4^{+})(4^{-} \ 1^{+} \ 3^{-} \ 2^{+})\]

\noindent In particular, $\sigma_{H'}\neq \sigma_{3,4}$ and $\sigma_{H'}\neq\sigma_{4,3}$.

\end{ex}

\medskip

More generally, a generalized Kauer move of $H'$ is not necessarily a succession of standard Kauer moves as pictured in the following example.

\medskip

\begin{ex} \label{ex:succession of standard Kauer moves 2} Let us consider the following Brauer graph $\Gamma=(H,\iota, \sigma)$

\smallskip

\begin{figure}[H]
    \centering
    \begin{tikzpicture}
        \tikzstyle{vertex}=[draw, circle, minimum size=0.2cm]
        \node[vertex] (1) at (-2,0) {};
        \node[vertex] (2) at (2,0) {};
        \draw[Red] (1) to[bend right=25] node[near start, above]{$1^{+}$} node[near end, above]{$1^{-}$} (2);
        \draw[VioletRed] (1) to[bend left=25] node[near start, above]{$2^{+}$} node[near end, above]{$2^{-}$} (2);
        \draw[Orange] (1) to[bend left=60] node[near start, above]{$3^{+}$} node[near end, above]{$3^{-}$} (2);
        \draw[Goldenrod] (1) to[bend right=60] node[near start, below]{$4^{+}$} node[near end, below]{$4^{-}$} (2);
        \end{tikzpicture}
    \label{Example 2}
\end{figure}

\smallskip

\noindent where $j^{-}$ denotes $\iota j^{+}$ and $\sigma=(1^{+} \ 2^{+} \ 3^{+} \ 4^{+})(1^{-} \ 4^{-} \ 3^{-} \ 2^{-})$. We set $H'=\{1^{+}, 1^{-}, 2^{+}, 2^{-}\}$. In this case, the maximal sectors of elements in $H'$ are $(1^{+},1)$ and $(2^{-},1)$. Thus, the Brauer graph obtained from $\Gamma$ by a generalized Kauer move of $H'$ is given by $\mu^{+}_{H'}(\Gamma)=(H,\iota,\sigma_{H'})$ where

\[\sigma_{H'}=(1^{+} \ 3^{+})(2^{-} \ 4^{-})\sigma(2^{+} \ 3^{-})(1^{-} 4^{+})=(1^{+} \ 2^{+} \ 4^{-} \ 3^{-})(2^{-} \ 1^{-} \ 3^{+} \ 4^{+})\]

\noindent Notice that any standard Kauer move in $\Gamma$ does not change the edge $j$ but interchanges the half-edges $j^{+}$ and $j^{-}$. In particular, since the arcs 2 and 4 are not direct successors in any vertex of $\Gamma$, we cannot obtain $\mu^{+}_{H'}(\Gamma)$ by a succession of standard Kauer moves.

\end{ex}

\medskip

Using the derived equivalence classification of Brauer graph algebras in \cite{OZ}, we can prove that the generalized Kauer moves also yield derived equivalences between Brauer graph algebras.

\medskip

\begin{prop} \label{prop:derived equivalent Brauer graph algebras}
    Let $\Gamma=(H,\iota,\sigma)$ be a Brauer graph and $H'$ be a subset of $H$ stable under $\iota$. Then, the Brauer graph algebras associated to $\Gamma$ and $\mu^{+}_{H'}(\Gamma)$ are derived equivalent.

\end{prop}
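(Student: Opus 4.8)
The plan is to prove the statement by invoking the derived equivalence classification of Brauer graph algebras of Opper and Zvonareva \cite{OZ}. Recall that, for algebras of multiplicity one, this classification asserts that the derived equivalence class of $B_{\Gamma}$ is governed by a short list of invariants of the underlying ribbon graph: the number of edges and the homeomorphism type of the associated ribbon surface, that is, its genus together with the number of its boundary components and their perimeters (plus, in the finitely many exceptional configurations isolated in \cite{OZ}, one further $\mathbb{Z}/2$-valued invariant). Since $\mu^{+}_{H'}$ leaves the set of half-edges $H$ and the pairing $\iota$ unchanged and only replaces the orientation $\sigma$ by $\sigma'=\tau^{\Gamma}_{\mathrm{cut}}\sigma\tau^{\Gamma}_{\mathrm{paste}}$, the whole problem reduces to showing that $\Gamma$ and $\mu^{+}_{H'}(\Gamma)$ carry the same ribbon surface. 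Using that vertices, edges and faces are indexed by $H/\sigma$, $H/\iota$ and $H/\sigma\iota$ respectively, and that the perimeter of a face is the length of the corresponding $\sigma\iota$-orbit, it suffices to establish two combinatorial identities: that $|H/\sigma'|=|H/\sigma|$, and that $\sigma'\iota$ and $\sigma\iota$ have the same cycle type. The genus is then forced by the Euler characteristic of the ribbon surface from the three cardinalities $|H/\sigma|$, $|H/\iota|$ and $|H/\sigma\iota|$.

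For the first identity I would show that cutting and pasting change the number of vertices by opposite amounts. As the transpositions making up $\tau^{\Gamma}_{\mathrm{cut}}$ and $\tau^{\Gamma}_{\mathrm{paste}}$ have pairwise disjoint supports (noted after Definition \ref{def:sector}), the effect can be read off sector by sector. For a maximal sector $(h,r)\in\mathrm{Sect}(H',\sigma)$ the cut transposition $(h\ \ \sigma^{r+1}h)$ has both entries in a single $\sigma$-orbit, so left multiplication splits off the ``sector vertex'' $(h,\sigma h,\dots,\sigma^{r}h)$; since distinct sectors cut out non-crossing arcs of the orbits, the total vertex count rises by $|\mathrm{Sect}(H',\sigma)|$. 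Dually, the paste transposition $(\sigma^{r}h\ \ \iota\sigma^{r+1}h)$ has its entries in two different orbits of $\tau^{\Gamma}_{\mathrm{cut}}\sigma$ — the sector vertex and the vertex meeting the edge $\{\sigma^{r+1}h,\iota\sigma^{r+1}h\}$ (note $\iota\sigma^{r+1}h\notin H'$, because $H'$ is $\iota$-stable and $\sigma^{r+1}h\notin H'$) — so right multiplication merges them and the count drops again by $|\mathrm{Sect}(H',\sigma)|$, exactly as in the local picture of Figure \ref{Generalized Kauer move}. For the second identity I would run the same localization on the boundary walks of $\sigma\iota$, checking that transporting each sector reroutes the walks without altering the number of $\sigma\iota$-orbits or their lengths; the explicit computations in Examples \ref{ex:succession of standard Kauer moves 1} and \ref{ex:succession of standard Kauer moves 2} confirm that both the face count and the perimeters are preserved.

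The main obstacle is the face computation. Unlike the vertex count, whose cut and paste contributions cancel termwise, one must verify that rerouting the boundary walks through the pasted sectors neither creates nor destroys $\sigma\iota$-orbits and keeps each of their lengths, and one must dispose of the degenerate configurations — when $\{\sigma^{r+1}h,\iota\sigma^{r+1}h\}$ is a loop, when a sector fills an entire vertex, or when half-edges attached to distinct sectors become adjacent after pasting — in which the naive bookkeeping can fail. A further point requiring care, already present for the vertex count, is that a single $\sigma$-orbit may contain several maximal sectors, so that $\tau^{\Gamma}_{\mathrm{cut}}$ and $\tau^{\Gamma}_{\mathrm{paste}}$ act on it through a product of transpositions, and one must argue that these still split, respectively merge, as expected. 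No shortcut through the classical theory is available: by Proposition \ref{prop:usual Kauer move} the move $\mu^{+}_{H'}$ coincides with the Kauer move of \cite{Kauer} only for $H'=\{h,\iota h\}$, and Example \ref{ex:succession of standard Kauer moves 2} exhibits an $H'$ for which $\mu^{+}_{H'}$ is not a composite of standard Kauer moves, so the result cannot be obtained by iterating Kauer's theorem. Once both identities are established, $\Gamma$ and $\mu^{+}_{H'}(\Gamma)$ have homeomorphic ribbon surfaces and equally many edges; the exceptional $\mathbb{Z}/2$-invariant of \cite{OZ}, being a function of these same data, then agrees as well, and \cite{OZ} delivers the desired derived equivalence.
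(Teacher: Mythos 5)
Your overall strategy coincides with the paper's: both proofs invoke the Opper--Zvonareva classification (Theorem 7.12 in \cite{OZ}) and reduce the statement to checking that the numbers of vertices and edges, the faces with their perimeters, and bipartiteness are preserved by $\mu^{+}_{H'}$; your sector-by-sector cancellation argument for the vertex count is essentially the paper's argument for condition (i). However, on the two remaining conditions your proposal has genuine gaps. For the faces, you correctly identify the computation as ``the main obstacle'' and even enumerate the degenerate configurations where naive bookkeeping fails, but you never actually carry it out: ``I would run the same localization on the boundary walks, checking that transporting each sector reroutes the walks without altering the number of $\sigma\iota$-orbits or their lengths'' is a statement of intent, not a proof, and the appeal to Examples \ref{ex:succession of standard Kauer moves 1} and \ref{ex:succession of standard Kauer moves 2} carries no weight --- those examples concern non-composability with standard Kauer moves and say nothing about faces. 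The paper closes exactly this gap with one clean device: it exhibits an explicit element conjugating $\sigma\iota$ to $\sigma_{H'}\iota$ in $\mathfrak{S}_{H}$, namely $t\tau^{\Gamma}_{\mathrm{cut}}$ where $t$ equals $(\tau^{\Gamma}_{\mathrm{cut}}\sigma\iota)^{-1}$ on $H'$ and the identity on $H\backslash H'$. Conjugacy gives the equality of the number of faces and of the multisets of perimeters in all cases simultaneously, with no case analysis of loops, full vertices, or adjacent sectors; this is the idea your proposal is missing.

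The second gap is worse because it is not merely an omission but an incorrect claim: you assert that the exceptional $\mathbb{Z}/2$-valued invariant, ``being a function of these same data, then agrees as well.'' In the classification of \cite{OZ} bipartiteness is an \emph{independent} invariant --- that is precisely why it appears as a separate condition (iii) in Theorem 7.12 --- and it is not determined by the numbers of vertices, edges and faces and the perimeters. The paper verifies it directly: given a $2$-coloring $C:H\rightarrow\{0,1\}$ with $C\sigma=C$ and $C\iota=1-C$, it defines $C'$ by flipping the value of $C$ on every half-edge of $H'$ and keeping it elsewhere, and checks $C'\sigma_{H'}=C'$ and $C'\iota=1-C'$. (This works because each sector is re-glued at the opposite endpoint of the edge $[\sigma^{r+1}h]$, which carries the opposite color in any bipartition; the converse direction follows since $\Gamma$ is recovered from $\mu^{+}_{H'}(\Gamma)$ by the same kind of cut-and-paste moves.) Without this step, or something equivalent, your argument does not meet the hypotheses of the classification theorem and the derived equivalence does not follow.
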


\medskip

\begin{proof} By Theorem 7.12 in \cite{OZ}, we have to check the following conditions

\smallskip

\begin{enumerate}[label=(\roman*)]
\item $\Gamma$ and $\mu^{+}_{H'}(\Gamma)$ have the same number of vertices and edges,
\item $\Gamma$ and $\mu^{+}_{H'}(\Gamma)$ have the same number of faces and their multi-sets of perimeters of the faces coincide,
\item Either both or none of $\Gamma$ and $\mu^{+}_{H'}(\Gamma)$ are bipartite.
\end{enumerate}

\smallskip

\noindent Recall that a graph $(V,E)$ is bipartite if its set of vertices $V$ admits a partition $V=V_{1}\coprod V_{2}$ such that each edge connects a vertex in $V_{1}$ with a vertex in $V_{2}$. Using Definition \ref{def:Brauer graph H} of a Brauer graph, these items may be reformulated as

\smallskip

\begin{enumerate}[label=(\roman*)]
\item \label{i} $\mathrm{card}(H/\sigma)=\mathrm{card}(H/\sigma_{H'})$ and $\mathrm{card}(H/\iota)=\mathrm{card}(H/\iota)$,
\item \label{ii} $\sigma\iota$ and $\sigma_{H'}\iota$ are conjugated in $\mathfrak{S}_{H}$,
\item \label{iii} $\exists C:H\rightarrow\{0,1\}$ such that $C\sigma=C \mbox{ and } C\iota=1-C \Leftrightarrow \exists C':H\rightarrow\{0,1\}$ such that $C'\sigma_{H'}=C' \mbox{ and } C'\iota=1-C'$.
\end{enumerate}

\smallskip

\noindent where $\sigma_{H'}=\tau^{\Gamma}_{\mathrm{cut}}\sigma\tau^{\Gamma}_{\mathrm{paste}}$ is the orientation of $\mu^{+}_{H'}(\Gamma)$. The item \ref{i} is clear : each transposition in $\tau_{\mathrm{cut}}^{\Gamma}$ corresponding to a maximal sector $(h,r)$,  creates a new vertex and the transposition in $\tau_{\mathrm{paste}}^{\Gamma}$ corresponding to $(h,r)$ withdraws this new vertex as pictured in Figure \ref{Generalized Kauer move}. Moreover, one can check that $t\tau^{\Gamma}_{\mathrm{cut}}$ is an element in $\mathfrak{S}_{H}$ conjugating $\sigma\iota$ and $\sigma_{H'}\iota$ in \ref{ii}, where $t\in\mathfrak{S}_{H}$ is defined by 

\[t_{\vert_{H'}}=(\tau^{\Gamma}_{\mathrm{cut}}\sigma\iota)^{-1}_{\vert_{H'}} \qquad \mbox{and} \qquad t_{\vert_{H\backslash H'}}=\id{H\backslash H'}\]

\noindent Finally, for item \ref{iii}, notice that $\Gamma$ can be constructed from $\mu^{+}_{H'}(\Gamma)$ using previous cutting and pasting actions. Thus, we only have to construct $C':H\rightarrow \{0,1\}$ from $C:H\rightarrow\{0,1\}$. Let us define $C'$ as follows

\[C'(h)=C(h) \ \mbox{if $h\notin H'$} \qquad \mbox{and} \qquad C'(h)=1-C(h) \ \mbox{if $h\in H$}\]

\noindent One can check that $C'\sigma_{H'}=C'$ and $C'\iota=1-C'$.

\end{proof}

\medskip

\begin{rem}
    In Example 4.7 in \cite{Antipov}, Antipov constructs the two following Brauer graphs $\Gamma_{1}=(H,\iota,\sigma_{1})$ and $\Gamma_{2}=(H,\iota,\sigma_{2})$

    \smallskip
    
    \begin{figure}[H]
        \centering
           \begin{tikzpicture}
        \tikzstyle{vertex}=[draw, circle, minimum size=0.2cm]
        \begin{scope}
        \node[vertex] (1) at (-2,0) {};
        \node[vertex] (2) at (2,0) {};
        \draw[Red] (1)--(2) node[near start, below]{$2^{+}$} node[very near end, above]{$2^{-}$};
        \draw[VioletRed] (1) to[bend right=60] node[midway, below]{$1^{+}$} (0,-0.1);
        \draw[VioletRed] (0.1,0.1) to[bend left=60] node[midway, above]{$1^{-}$} (2);
        \draw[Fuchsia] (1) to[bend left=45] node[midway, above right]{$3^{+}$} (-0.5,0.1);
        \draw[Fuchsia] (-0.35,-0.1) -- (-0.2,-0.2);
        \draw[Fuchsia] (0,-0.35) to[bend right=25] node[near start, above]{$3^{-}$} (2);
        \draw[Goldenrod] (1) to[bend right=90] node[near start, below]{$5^{+}$} node[near end, below]{$5^{-}$} (2);
        \draw[Orange] (1) to[bend left=60] node[midway, above]{$4^{+}$} (0.5,0.6);
        \draw[Orange] (0.65,0.4) to[bend left=5] (0.9,0.1);
        \draw[Orange] (1,-0.1) to[bend right=5] (1.2,-0.3);
        \draw[Orange] (1.4,-0.45) to[bend right] node[below left]{$4^{-}$}(2);
        \draw (0,-2) node{$\Gamma_{1}$};
        \end{scope}
        
        \begin{scope}[xshift=6cm]
        \node[vertex] (1) at (-2,0) {};
        \node[vertex] (2) at (2,0) {};
        \draw[Fuchsia] (1) to[bend left=60] node[near start, below]{$3^{+}$} node[near end, above right]{$3^{-}$} (2);
        \draw[VioletRed] (1) to[bend right=45] node[near end, below]{$1^{+}$} (0,0);
        \draw[VioletRed] (0,0) to[bend left=45] node[midway, above]{$1^{-}$} (2);
        \draw[Red] (1) to[bend left=30] node[midway, below]{$2^{+}$} (-0.1,0.1);
        \draw[Red] (0.1,-0.1) to[bend right=60] node[very near end,below right]{$2^{-}$} (2);
        \draw[Goldenrod] (1) to[bend right=45] node[near start,below]{$5^{+}$} node[very near end, below]{$5^{-}$}(1,-0.7);
        \draw[Goldenrod] (1.2,-0.45) to[bend left=5] (2);
        \draw[Orange] (0.6,0.25) to[bend right=5] node[midway, below]{$4^{-}$}(2);
        \draw[Orange] (0.3,0.4) to[bend left=15] (-0.4,1);
        \draw[Orange] (-0.5,1.2) to[bend right=70] node[midway, above]{$4^{+}$} (1);
        \draw (0,-2) node{$\Gamma_{2}$};        
        \end{scope}
        \end{tikzpicture}
        \label{Antipov example}
    \end{figure}

    \smallskip
    
    \noindent with $\sigma_{1}=(1^{+} \ 2^{+} \ 3^{+} \ 4^{+} \ 5^{+})(1^{-} \ 2^{-} \ 3^{-} \ 4^{-} \ 5^{-})$ and $\sigma_{2}=(1^{+} \ 2^{+} \ 3^{+} \ 4^{+} \ 5^{+})(1^{-} \ 4^{-} \ 5^{-} \ 2^{-} \ 3^{-})$, where $j^{-}$ denotes $\iota j^{+}$. Using the derived equivalence classification of Brauer graph algebras in \cite{OZ}, we can prove that the Brauer graph algebras associated to $\Gamma_{1}$ and $\Gamma_{2}$ are derived equivalent. However, $\Gamma_{2}$ cannot be obtained from $\Gamma_{1}$ by a succession of generalized Kauer moves. Indeed, note that in $\Gamma_{1}$, $\iota$ and $\sigma_{1}$ commute. One can check that this property is preserved under generalized Kauer moves since in this case, $(h,r)$ is a maximal sector if and only if $(\iota h, r)$ is a maximal sector. Since $\iota$ and $\sigma_{2}$ do not commute in $\Gamma_{2}$, we conclude that $\Gamma_{2}$ cannot be obtained from $\Gamma_{1}$ by a succession of generalized Kauer moves. Thus, the generalized Kauer moves are not sufficient to obtain all Brauer graph algebras derived equivalent to a given one.
    
\end{rem}

\medskip

\section{Link with gentle algebras}

\medskip

We have seen in Theorem \ref{thm:Schroll} that gentle algebras are in correspondence with Brauer graph algebras equipped with an admissible cut. Moreover, gentle algebras are also in correspondence with marked surfaces equipped with an admissible dissection \cite{OPS}. This last correspondence is based on the marked ribbon graph of a gentle algebra $\Lambda$, which is in fact the Brauer graph of its trivial extension $\mathrm{Triv}(\Lambda)$ equipped with the corresponding admissible cut \cite{Schroll}. The goal of this section is to define moves of admissible dissections in marked surfaces using generalized Kauer moves on Brauer graphs under previous correspondences. 

\medskip

\subsection{Admissible cuts and graded generalized Kauer moves}

\medskip

We want to understand what is an admissible cut using the notations of Definition \ref{def:Brauer graph H}. The choice of an admissible cut $C$ of a Brauer graph algebra $B=kQ/I$ gives a $\Z$-grading on its associated quiver $Q$ : the arrows in $C$ are of degree 1 and the others of degree 0. Since the relations \ref{item1:Brauer graph algebra}, \ref{item2:Brauer graph algebra} and \ref{item3:Brauer graph algebra} defining $I$ are homogeneous with respect to the $\Z$-grading on $Q$, this induces a $\Z$-grading on $B$. Moreover, the gentle algebra $\Lambda=B_{C}$ may be seen as the degree 0 subalgebra of $B$. Considering the source of the arrows, we can construct a bijection between the arrows of a Brauer graph algebra and the half-edges of its associated Brauer graph. Thus, an admissible cut $\Delta$ of a Brauer graph algebra $B$ is the data of a $\Z$-grading $d:H\rightarrow \{0,1\}$ on its associated Brauer graph $(H,\iota,\sigma)$ such that for all $v\in H/\sigma$ there exists a unique $h_{v}\in H$ whose source vertex is $v$ satisfying $d(h_{v})=1$. In fact, the admissible cuts belong to a particular class of $\Z$-gradings on a Brauer graph, called the $1$-homogeneous $\Z$-gradings. 

\medskip

\begin{defn}\label{def:1-homogeneous degree}
    Let $\Gamma=(H,\iota,\sigma)$ be a Brauer graph. 

    \smallskip
    
    \begin{itemize}[label=\textbullet, font=\tiny]
        \item A $\Z$-grading $d:H\rightarrow \Z$ is \textit{1-homogeneous} if for all $v\in H/\sigma$
    
    \[\sum_{h\in H, s(h)=v}d(h)=1\]

    \noindent where $s:H\rightarrow H/\sigma$ is the source map of $\Gamma$.
        \item We say that $(\Gamma,d)$ is a \textit{$\Z$-graded Brauer graph} if $d:H\rightarrow \Z$ is a 1-homogeneous $\Z$-grading on $\Gamma$.
    \end{itemize}
\end{defn}

\medskip

In particular, an admissible cut $d:H\rightarrow\Z$ of a Brauer graph $\Gamma=(H,\iota,\sigma)$ is a 1-homogeneous $\Z$-grading on $\Gamma$ whose values are contained in $\{0,1\}$. Let us now define a $\Z$-graded version of the generalized Kauer moves for $\Z$-graded Brauer graphs.

\medskip

\begin{defn}\label{def:graded generalized Kauer move}
    Let $(\Gamma,d)$ be a $\Z$-graded Brauer graph with $\Gamma=(H,\iota,\sigma)$. Let $H'$ be a subset of $H$ stable under $\iota$. The \textit{$\Z$-graded generalized Kauer move} of a sector $(h,r)\in\mathrm{sect}(H',\sigma)$ in $(\Gamma,d)$ gives the $\Z$-graded Brauer graph $\mu_{(h,r)}^{+}(\Gamma,d)=(H,\iota,\sigma_{(h,r)},d_{(h,r)})$ where

\[\sigma_{(h,r)}=(h \ \sigma^{r+1}h)\sigma(\sigma^{r}h \ \iota\sigma^{r+1}h)\]

\noindent where $d_{(h,r)}:H\rightarrow \Z$ is defined by

\begin{equation*}
    \begin{aligned}
        d_{(h,r)}: \ &\iota\sigma^{r+1}h &&\mapsto &&-\sum_{i=0}^{r}d(\sigma^{i}h) \\
        &\sigma^{r}h &&\mapsto &&\left\{\begin{aligned}
            &d(\iota\sigma^{r+1}h)+d(\sigma^{r}h) &&\mbox{if $\iota\sigma^{r+1}h\neq\sigma^{-1}h$} \\
            &\sum_{i=-1}^{r}d(\sigma^{i}h)+d(\sigma^{r}h) &&\mbox{else}
        \end{aligned}\right. \\
        &\sigma^{-1}h &&\mapsto &&\left\{\begin{aligned}
            &\sum_{i=-1}^{r}d(\sigma^{i}h) &&\mbox{if $\iota\sigma^{r+1}h\neq\sigma^{-1}h$} \\
            &-\sum_{i=0}^{r}d(\sigma^{i}h) &&\mbox{else}   
        \end{aligned}\right. \\
        &h' &&\mapsto &&d(h') \qquad \mbox{for $h'\neq\iota\sigma^{r+1}h,\sigma^{r}h,\sigma^{-1}h$}
    \end{aligned}
\end{equation*}

\smallskip

\begin{figure}[H]
    \centering
              
    \begin{tikzpicture}[scale=0.9]
        \tikzstyle{vertex}=[draw, circle, minimum size=0.2cm]
        \begin{scope}[xshift=-3cm]
        \node[vertex] (1) at (-2,0) {};
        \node[vertex] (2) at (2,0) {};
        \draw (1)--(2) node[near start, above]{$\sigma^{r+1}h$} node[near end, above]{$\iota\sigma^{r+1}h$};
        \draw (1)--(-2.5,-1.75) node[below]{$\sigma^{-1}h$};
        \draw[Orange] (1)--(-1.5,-1.75) node[below, right]{$h$};
        \draw[Orange] (1)--(-1,-1.25) node[below, right]{$\sigma^{j}h$};
        \draw[Orange] (1)--(-0.5,-0.75) node[below, right]{$\sigma^{r}h$};
        \draw (2)--(2.5,-1.75) node[below]{$\sigma\iota\sigma^{r+1}h$};
        \draw[|-|, Orange] (-1.5,-2.2)--(0,-2.2) node[below, midway]{$(h,r)$};
        \end{scope} 
        
        \draw[->] (0,0)--(3,0);

        \begin{scope}[xshift=6cm]
        \node[vertex] (1) at (-2,0) {};
        \node[vertex] (2) at (2,0) {};
        \draw (1)--(2) node[near start, above]{$\sigma^{r+1}h$} node[near end, above]{$\iota\sigma^{r+1}h$};
        \draw (1)--(-2.5,-1.75) node[below]{$\sigma^{-1}h$};
        \draw[Orange] (2)--(1.5,-1.75) node[below left]{$\sigma^{r}h$};
        \draw[Orange] (2)--(1,-1.25) node[below, left]{$\sigma^{j}h$};
        \draw[Orange] (2)--(0.5,-0.75) node[below, left]{$h$};
        \draw (2)--(2.5,-1.75) node[below]{$\sigma\iota\sigma^{r+1}h$};
        \end{scope}
        \end{tikzpicture}
    \caption{Generalized Kauer move of a sector (h,r)}
    \label{Generalized Kauer move of a sector}
\end{figure}
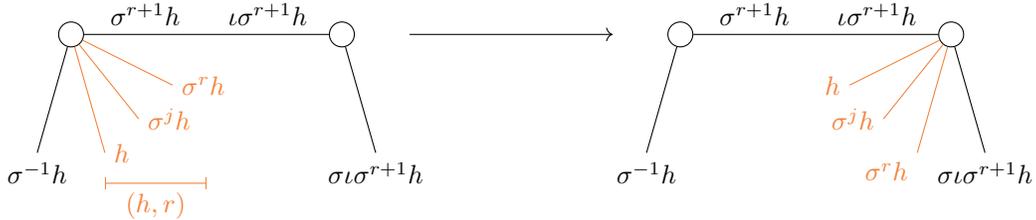
\end{defn}

\medskip

Let us check that $d_{(h,r)}:H\rightarrow \Z$ is indeed 1-homogeneous. Using the description of the cutting and pasting actions in Figure \ref{Left and right action}, there exists a bijection $\phi:H/\sigma\rightarrow H/\sigma_{(h,r)}$ satisfying 

\begin{equation*}
    \begin{aligned}
        &s_{(h,r)}(\sigma^{i}h)=\phi(s(\iota \sigma^{r+1}h))  &&\mbox{for $i=0,\ldots,r$} \\
        &s_{(h,r)}(h')=\phi(s(h'))  &&\mbox{for $h'\neq \sigma^{i}h$, $i=0,\ldots,r$}
    \end{aligned}
\end{equation*}

\noindent where $s:H\rightarrow H/\sigma$ and $s_{(h,r)}:H\rightarrow H/\sigma_{(h,r)}$ are the source map of $(H,\iota,\sigma)$ and $(H,\iota,\sigma_{(h,r)})$ respectively. Thus, it is easy to check that for all $v\in H/\sigma_{(h,r)}$

\[\sum_{h'\in H, s_{(h,r)}(h')=v}d_{(h,r)}(h')=\sum_{h'\in H, s(h')=\phi^{-1}(v)}d(h')=1\]

\medskip

\begin{rem} We have seen in Proposition \ref{prop:derived equivalent Brauer graph algebras} that the generalized Kauer moves over $H'$ yield derived equivalences between Brauer graph algebras. However, the generalized Kauer moves of a sector of elements in $H'$, defined in the previous definition, does not yield derived equivalences between Brauer graph algebras in general.
\end{rem}

\medskip

\begin{lem}\label{lem:successive sectors}
Let $(h_{1},r_{1}),(h_{2},r_{2})\in\mathrm{sect}(H',\sigma)$ be sectors in a Brauer graph $\Gamma=(H,\iota,\sigma)$ of elements in $H'\subset H$ stable under $\iota$, such that $\sigma^{i}h_{1}\neq \sigma^{j}h_{2}$ for $i=0,\ldots, r_{1}$, $j=0,\ldots, r_{2}$. Then, $(h_{1},r_{1})\in \mathrm{sect}(H',\sigma_{(h_{2},r_{2})})$.
\end{lem}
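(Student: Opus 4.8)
The plan is to show that the mutated orientation $\sigma_{(h_2,r_2)}$ agrees with $\sigma$ along the whole orbit segment $h_1,\sigma h_1,\ldots,\sigma^{r_1+1}h_1$ swept out by the sector $(h_1,r_1)$ together with its immediate successor. Once this is done, the two defining conditions of a sector for $\sigma_{(h_2,r_2)}$---that $\sigma_{(h_2,r_2)}^{i}h_1\in H'$ for $0\le i\le r_1$ and $\sigma_{(h_2,r_2)}^{r_1+1}h_1\notin H'$---transfer at once from the corresponding conditions for $\sigma$.

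First I would locate precisely where $\sigma_{(h_2,r_2)}=(h_2\ \sigma^{r_2+1}h_2)\,\sigma\,(\sigma^{r_2}h_2\ \iota\sigma^{r_2+1}h_2)$ can differ from $\sigma$. Denoting the left transposition by $a$ and the right one by $b$, we have $\sigma_{(h_2,r_2)}(x)=a\bigl(\sigma(b(x))\bigr)$. A short case check shows $\sigma_{(h_2,r_2)}(x)=\sigma(x)$ for every $x\notin\{\sigma^{r_2}h_2,\ \iota\sigma^{r_2+1}h_2,\ \sigma^{-1}h_2\}$: the transposition $b$ fixes $x$ unless $x\in\{\sigma^{r_2}h_2,\iota\sigma^{r_2+1}h_2\}$, and $a$ fixes $\sigma(x)$ unless $\sigma(x)\in\{h_2,\sigma^{r_2+1}h_2\}$, that is unless $x\in\{\sigma^{-1}h_2,\sigma^{r_2}h_2\}$.

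The core of the argument is then to check that the segment $\{h_1,\ldots,\sigma^{r_1}h_1\}$ meets none of these three half-edges. That $\sigma^{r_2}h_2$ is avoided is immediate from the disjointness hypothesis with $j=r_2$. That $\iota\sigma^{r_2+1}h_2$ is avoided follows from the $\iota$-stability of $H'$: since $\sigma^{r_2+1}h_2\notin H'$ we get $\iota\sigma^{r_2+1}h_2\notin H'$, whereas every $\sigma^i h_1$ lies in $H'$. The delicate point---which I expect to be the main obstacle, as it is not directly governed by the hypothesis---is $\sigma^{-1}h_2$. Here I would argue by contradiction: if $\sigma^{-1}h_2=\sigma^i h_1$ with $0\le i\le r_1$, then $h_2=\sigma^{i+1}h_1$; if $i+1\le r_1$ this contradicts the disjointness hypothesis (with $j=0$), and if $i+1=r_1+1$ it forces $h_2=\sigma^{r_1+1}h_1\notin H'$, contradicting $h_2\in H'$.

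With all three exceptional half-edges avoided, a one-line induction finishes the proof: since each $\sigma^i h_1$ with $0\le i\le r_1$ is not an exception, $\sigma_{(h_2,r_2)}$ acts on it exactly as $\sigma$, so $\sigma_{(h_2,r_2)}^{i}h_1=\sigma^i h_1$ for all $0\le i\le r_1+1$. Evaluating at $i=0,\ldots,r_1$ yields $\sigma_{(h_2,r_2)}^{i}h_1=\sigma^i h_1\in H'$, and at $i=r_1+1$ yields $\sigma_{(h_2,r_2)}^{r_1+1}h_1=\sigma^{r_1+1}h_1\notin H'$; hence $(h_1,r_1)\in\mathrm{sect}(H',\sigma_{(h_2,r_2)})$, as required.
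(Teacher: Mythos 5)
Your proposal is correct and takes essentially the same route as the paper: the paper's proof consists precisely of the one-line computation $\sigma_{(h_{2},r_{2})}\sigma^{i}h_{1}=(h_{2}\ \sigma^{r_{2}+1}h_{2})\sigma(\sigma^{r_{2}}h_{2}\ \iota\sigma^{r_{2}+1}h_{2})\sigma^{i}h_{1}=\sigma^{i+1}h_{1}$ for $i=0,\ldots,r_{1}$, with the verification that the two transpositions act trivially left implicit. Your identification of the three exceptional half-edges $\sigma^{r_{2}}h_{2}$, $\iota\sigma^{r_{2}+1}h_{2}$, $\sigma^{-1}h_{2}$, and the checks via disjointness, $\iota$-stability, and the contradiction argument for $\sigma^{-1}h_{2}$, simply make explicit exactly the details the paper omits.
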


\medskip

\begin{proof}
By definition of $\sigma_{(h_{2},r_{2})}$, we have 

\[\sigma_{(h_{2},r_{2})}\sigma^{i}h_{1}=(h_{2} \ \sigma^{r_{2}+1}h_{2})\sigma(\sigma^{r_{2}}h_{2} \ \iota\sigma^{r_{2}+1}h_{2})\sigma^{i}h_{1}=\sigma^{i+1} h_{1}\]

\noindent for $i=0,\ldots, r_{1}$. In particular, $\sigma_{(h_{2},r_{2})}^{i}h_{1}=\sigma^{i}h_{1}$ for $i=0,\ldots,r_{1}+1$. 

\end{proof}

\medskip

For instance, this assumption holds if $(h_{1},r_{1})$ and $(h_{2},r_{2})$ are distinct maximal sectors in $\Gamma$. However, if $(h_{1},r_{1})$ and $(h_{2},r_{2})$ are distinct maximal sectors in $\Gamma$, $(h_{1},r_{1})$ is not necessarily a maximal sector in the Brauer graph $(H,\iota,\sigma_{(h_{2},r_{2})})$. Under the assumption of Lemma \ref{lem:successive sectors}, we can consider successive $\Z$-graded generalized Kauer moves. Let us denote 

\[\mu^{+}_{(h_{1},r_{1})(h_{2},r_{2})}(\Gamma,d)=(H,\iota,\sigma_{(h_{1},r_{1})(h_{2},r_{2})},d_{(h_{1},r_{1})(h_{2},r_{2})})\]

\noindent the $\Z$-graded Brauer graph defined by $\mu^{+}_{(h_{1},r_{1})}(\mu^{+}_{(h_{2},r_{2})}(\Gamma,d))$.

\medskip

\begin{prop}\label{prop:commutativity of graded generalized Kauer moves}
Let $(h_{1},r_{1}),(h_{2},r_{2})\in \mathrm{Sect}(H',\sigma)$ be distinct maximal sectors in a Brauer graph $\Gamma=(H,\iota,\sigma)$ of elements in $H'\subset H$ stable under $\iota$. Then, for any 1-homogeneous $\Z$-grading $d:H\rightarrow \Z$ of $\Gamma$

\[\mu^{+}_{(h_{1},r_{1})(h_{2},r_{2})}(\Gamma,d)=\mu^{+}_{(h_{2},r_{2})(h_{1},r_{1})}(\Gamma,d)\]
\end{prop}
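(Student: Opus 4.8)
The plan is to prove the equality of $\Z$-graded Brauer graphs by treating the orientation and the grading separately, i.e. by showing $\sigma_{(h_1,r_1)(h_2,r_2)}=\sigma_{(h_2,r_2)(h_1,r_1)}$ and $d_{(h_1,r_1)(h_2,r_2)}=d_{(h_2,r_2)(h_1,r_1)}$. As $(h_1,r_1)$ and $(h_2,r_2)$ are distinct maximal sectors, the hypothesis of Lemma \ref{lem:successive sectors} is satisfied in both orders, so $(h_1,r_1)$ is a sector of $(H,\iota,\sigma_{(h_2,r_2)})$ with $\sigma_{(h_2,r_2)}^{i}h_1=\sigma^{i}h_1$ for $i=0,\ldots,r_1+1$, and symmetrically. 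Feeding this into Definition \ref{def:graded generalized Kauer move} expands each composite orientation as a product of four transpositions conjugating $\sigma$:
\[\sigma_{(h_1,r_1)(h_2,r_2)}=(h_1 \ \sigma^{r_1+1}h_1)(h_2 \ \sigma^{r_2+1}h_2)\,\sigma\,(\sigma^{r_2}h_2 \ \iota\sigma^{r_2+1}h_2)(\sigma^{r_1}h_1 \ \iota\sigma^{r_1+1}h_1),\]
and the same formula with the indices $1$ and $2$ interchanged computes $\sigma_{(h_2,r_2)(h_1,r_1)}$.

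Thus the orientations coincide once the two outer transpositions commute and the two inner ones commute, which amounts to the disjointnesses $\{h_1,\sigma^{r_1+1}h_1\}\cap\{h_2,\sigma^{r_2+1}h_2\}=\varnothing$ and $\{\sigma^{r_1}h_1,\iota\sigma^{r_1+1}h_1\}\cap\{\sigma^{r_2}h_2,\iota\sigma^{r_2+1}h_2\}=\varnothing$. I would derive these from two observations: the interior half-edges $\sigma^{i}h_1$ $(0\le i\le r_1)$ and $\sigma^{j}h_2$ $(0\le j\le r_2)$ lie in $H'$, whereas $\sigma^{r_1+1}h_1$, $\sigma^{r_2+1}h_2$ and their $\iota$-images lie in $H\backslash H'$ (here maximality gives $\sigma^{-1}h_i\notin H'$ and $\iota$-stability of $H'$ handles the images); and two distinct maximal sectors lying in a common $\sigma$-orbit have distinct endpoints, forcing $\sigma^{r_1}h_1\neq\sigma^{r_2}h_2$ and hence $\sigma^{r_1+1}h_1\neq\sigma^{r_2+1}h_2$. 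Every one of the required pairwise inequalities follows from these.

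For the gradings, I would first observe that a single move of $(h_i,r_i)$ changes $d$ only at the three half-edges $\iota\sigma^{r_i+1}h_i$, $\sigma^{r_i}h_i$, $\sigma^{-1}h_i$. Consequently both composites agree with $d$ outside the finite set $S=\{\iota\sigma^{r_i+1}h_i,\ \sigma^{r_i}h_i,\ \sigma^{-1}h_i:i=1,2\}$ (using, for the second move, that $\sigma_{(h_2,r_2)}^{-1}h_1\in\{\sigma^{-1}h_1,\sigma^{r_2}h_2\}\subset S$, a value read off from the transposition expansion of $\sigma_{(h_2,r_2)}^{-1}$, and symmetrically). It then remains to compare the two composite values on $S$, which I would do by substituting the already-modified values $d_{(h_2,r_2)}$ (resp. $d_{(h_1,r_1)}$) into the formula of the second move.

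The main obstacle I anticipate is exactly this final comparison on $S$. Its difficulty is the case analysis forced by the \emph{adjacency} configurations $\sigma^{-1}h_1=\iota\sigma^{r_2+1}h_2$ and $\sigma^{-1}h_2=\iota\sigma^{r_1+1}h_1$ --- the only ways a special half-edge of one move can coincide with one of the other, all remaining coincidences being excluded by the $H'$-membership argument above --- combined with the self-adjacency conditions $\iota\sigma^{r_i+1}h_i=\sigma^{-1}h_i$ that select between the two branches of the degree formula in Definition \ref{def:graded generalized Kauer move}. In the adjacent cases the special half-edges overlap and the branches are chosen differently in the two orders, so one must verify that the telescoping sums $\sum_i d(\sigma^i h)$ recombine to the same values after the swap. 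The bookkeeping is routine but delicate; the $\iota$-stability of $H'$ and the maximality of the sectors are precisely what make every branch and every partial sum symmetric in the two sectors.
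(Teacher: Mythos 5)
Your proposal follows essentially the same route as the paper's proof: expand both composite orientations via Lemma \ref{lem:successive sectors} into the same product of four disjointly-supported transpositions conjugating $\sigma$, then verify the gradings agree by locating the finitely many half-edges whose degree is modified (including $\sigma_{(h_j,r_j)}^{-1}h_i\in\{\sigma^{-1}h_i,\sigma^{r_j}h_j\}$) and analyzing the adjacency cases $\iota\sigma^{r_j+1}h_j=\sigma^{-1}h_i$ together with the self-adjacency $\iota\sigma^{r_i+1}h_i=\sigma^{-1}h_i$ that selects the branch of the degree formula. The only difference is one of completeness rather than method: where you declare the final comparison on $S$ routine bookkeeping, the paper actually executes the representative computation (for $d_{(h_1,r_1)(h_2,r_2)}(\sigma^{r_1}h_1)$ in each branch), while conversely your explicit disjointness justifications via $H'$-membership and distinctness of maximal-sector endpoints, which the paper leaves implicit when commuting the transpositions, are correct.
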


\medskip

\begin{proof}
By definition, we have

\begin{equation*}
    \begin{aligned}
        \sigma_{(h_{1},r_{1})(h_{2},r_{2})}
        &=(h_{1} \ \sigma_{(h_{2},r_{2})}^{r_{1}+1}h_{1})\sigma_{(h_{2},r_{2})}(\sigma^{r_{1}}_{(h_{2},r_{2})}h_{1} \ \iota\sigma_{(h_{2},r_{2})}^{r_{1}+1}h_{1}) \\
        &=(h_{1} \ \sigma^{r_{1}+1}h_{1})(h_{2} \ \sigma^{r_{2}+1}h_{2})\sigma(\sigma^{r_{2}}h_{2} \ \iota\sigma^{r_{2}+1}h_{2})(\sigma^{r_{1}}h_{1} \ \iota\sigma^{r_{1}+1}h_{1}) \\
        &=(h_{2} \ \sigma^{r_{2}+1}h_{2})(h_{1} \ \sigma^{r_{1}+1}h_{1})\sigma(\sigma^{r_{1}}h_{1} \ \iota\sigma^{r_{1}+1}h_{1})(\sigma^{r_{2}}h_{2} \ \iota\sigma^{r_{2}+1}h_{2}) \\
        &=(h_{2} \ \sigma_{(h_{1},r_{1})}^{r_{2}+1}h_{2})\sigma_{(h_{1},r_{1})}(\sigma^{r_{2}}_{(h_{1},r_{1})}h_{2} \ \iota\sigma_{(h_{1},r_{1})}^{r_{2}+1}h_{2}) \\
        &=\sigma_{(h_{2},r_{2})(h_{1},r_{1})}
    \end{aligned}
\end{equation*}

\noindent It remains to check the commutativity of the degrees. By definition, the only half-edges whose degree is modified by the successive $\Z$-graded generalized Kauer moves are

\smallskip

\begin{itemize}[label=\textbullet, font=\tiny]
\item $\iota\sigma^{r_{i}+1}h_{i}$, $\sigma^{r_{i}}h_{i}$ and $\sigma^{-1}h_{i}$, for $i=1,2$;
\item $\iota\sigma_{(h_{j},r_{j})}^{r_{i}+1}h_{i}=\iota\sigma^{r_{i}+1}h_{i}$, for $\{i,j\}=\{1,2\}$;
\item $\sigma_{(h_{j},r_{j})}^{r_{i}}h_{i}=\sigma^{r_{i}}h_{i}$, for $\{i,j\}=\{1,2\}$;
\item $\sigma_{(h_{j},r_{j})}^{-1}h_{i}=\left\{\begin{aligned}
    &\sigma^{-1}h_{i} &&\mbox{if $\iota\sigma^{r_{j}+1}h_{j}\neq\sigma^{-1}h_{i}$} \\
    &\sigma^{r_{j}}h_{j} &&\mbox{else}
\end{aligned}\right.$, for $\{i,j\}=\{1,2\}$.
\end{itemize}

\smallskip

\noindent Since $h_{1}$ and $h_{2}$ play symmetric roles, it suffices to check the commutativity of the degrees for half-edges in the first line for $i=1$. We will only detail the computations for $\sigma^{r_{1}}h_{1}$, the other cases being similar.

\smallskip

\begin{itemize}[label=$\ast$]
    \item Assume that $\iota\sigma_{(h_{2},r_{2})}^{r_{1}+1}h_{1}\neq \sigma_{(h_{2},r_{2})}^{-1}h_{1}$. In this case $\iota\sigma^{r_{1}+1}h_{1}\neq\sigma^{-1}h_{1}$ and

\begin{equation*}
    \begin{aligned}
        d_{(h_{1},r_{1})(h_{2},r_{2})}(\sigma^{r_{1}}h_{1})
        &=d_{(h_{1},r_{1})(h_{2},r_{2})}(\sigma_{(h_{2},r_{2})}^{r_{1}}h_{1}) \\
        &=d_{(h_{2},r_{2})}(\iota\sigma_{(h_{2},r_{2})}^{r_{1}+1}h_{1})+d_{(h_{2},r_{2})}(\sigma^{r_{1}}_{(h_{2},r_{2})}h_{1}) \\
        &=d_{(h_{2},r_{2})}(\iota\sigma^{r_{1}+1}h_{1})+d_{(h_{2},r_{2})}(\sigma^{r_{1}}h_{1}) \\
        &=\left\{\begin{aligned}
            &d(\iota\sigma^{r_{1}+1}h_{1})+d(\sigma^{r_{1}}h_{1}) &&\mbox{if $\iota\sigma^{r_{1}+1}h_{1}\neq\sigma^{-1}h_{2}$} \\
            &\sum_{i=-1}^{r_{2}}d(\sigma^{i}h_{2})+d(\sigma^{r_{1}}h_{1}) &&\mbox{else}
        \end{aligned} \right.
    \end{aligned}
\end{equation*}

\noindent On the other hand, if $\iota\sigma^{r_{1}+1}h_{1}\neq\sigma^{-1}h_{2}$, 

\[d_{(h_{2},r_{2})(h_{1},r_{1})}(\sigma^{r_{1}}h_{1})=d_{(h_{1},r_{1})}(\sigma^{r_{1}}h_{1})=d(\iota\sigma^{r_{1}+1}h_{1})+d(\sigma^{r_{1}}h_{1})\]

\noindent Similarly, if $\iota\sigma^{r_{1}+1}h_{1}=\sigma^{-1}h_{2}$, then $\iota\sigma_{(h_{1},r_{1})}^{r_{1}+1}h_{2}\neq\sigma^{-1}_{(h_{1},r_{1})}h_{2}$ and

\begin{equation*}
    \begin{aligned}
        d_{(h_{2},r_{2})(h_{1},r_{1})}(\sigma^{r_{1}}h_{1})
        &=d_{(h_{2},r_{2})(h_{1},r_{1})}(\sigma_{(h_{1},r_{1})}^{-1}h_{2})    \\        &=\sum_{i=-1}^{r_{2}}d_{(h_{1},r_{1})}(\sigma^{i}_{(h_{1},r_{1})}h_{2}) \\
        &=\sum_{i=0}^{r_{2}}d_{(h_{1},r_{1})}(\sigma^{i}h_{2}) +d_{(h_{1},r_{1})}(\sigma^{r_{1}}h_{1}) \\
        &=\sum_{i=-1}^{r_{2}}d(\sigma^{i}h_{2})+d(\sigma^{r_{1}}h_{1}) 
    \end{aligned}
\end{equation*}

\item Assume that $\iota\sigma^{r_{1}+1}_{(h_{2},r_{2})}h_{1}=\sigma^{-1}_{(h_{2},r_{2})}h_{1}$. In this case $\iota\sigma^{r_{1}+1}h_{1}=\sigma^{-1}h_{1}$ and

\begin{equation*}
    \begin{aligned}
        d_{(h_{1},r_{1})(h_{2},r_{2})}(\sigma^{r_{1}}h_{1})
        &=d_{(h_{1},r_{1})(h_{2},r_{2})}(\sigma_{(h_{2},r_{2})}^{r_{1}}h_{1}) \\
        &=\sum_{i=-1}^{r_{1}}d_{(h_{2},r_{2})}(\sigma_{(h_{2},r_{2})}^{i}h_{1})+d_{(h_{2},r_{2})}(\sigma^{r_{1}}_{(h_{2},r_{2})}h_{1}) \\
        &=\sum_{i=-1}^{r_{1}}d(\sigma^{i}h_{1})+d(\sigma^{r_{1}}h_{1}) \\
        &=d_{(h_{1},r_{1})}(\sigma^{r_{1}}h_{1}) \\
        &=d_{(h_{2},r_{2})(h_{1},r_{1})}(\sigma^{r_{1}}h_{1})
    \end{aligned}
\end{equation*}
\end{itemize} 
\end{proof}

\medskip

Note that $\mu^{+}_{H'}(\Gamma)$ is the Brauer graph obtained by the succession of $\Z$-graded generalized Kauer moves of every maximal sectors in $\Gamma$. A priori, the 1-homogeneous $\Z$-grading $d_{(h,r)}:H\rightarrow\Z$ defined by a $\Z$-graded generalized Kauer move in a Brauer graph $\Gamma=(H,\iota,\sigma)$ equipped with an admissible cut is not necessarily an admissible cut of $(H,\iota,\sigma_{(h,r)})$. However, under some assumptions, we may ensure such a property.

\medskip

\begin{prop}\label{prop:admissible cut}
    Let $\Gamma=(H,\iota,\sigma)$ be a Brauer graph equipped with an admissible cut $d:H\rightarrow \Z$ and $H'\subset H$ stable under $\iota$. For any $(h,r)\in\mathrm{sect}(H',\sigma)$ such that $d(\sigma^{i}h)=0$ for $i=0,\ldots,r$, the 1-homogeneous $\Z$-grading $d_{(h,r)}:H\rightarrow \Z$ is an admissible cut of $(H,\iota,\sigma_{(h,r)})$.
\end{prop}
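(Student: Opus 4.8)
The plan is to reduce the proposition to a single numerical check. By definition an admissible cut of a Brauer graph is precisely a $1$-homogeneous $\Z$-grading whose values lie in $\{0,1\}$, and the $1$-homogeneity of $d_{(h,r)}$ with respect to the new orientation $\sigma_{(h,r)}$ has already been established in the computation following Definition \ref{def:graded generalized Kauer move} (for an arbitrary $1$-homogeneous $d$). Hence it remains only to prove that, under the hypothesis $d(\sigma^i h)=0$ for $i=0,\dots,r$, the grading $d_{(h,r)}$ takes values in $\{0,1\}$.

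First I would clarify which of the three distinguished half-edges $\iota\sigma^{r+1}h$, $\sigma^r h$ and $\sigma^{-1}h$ can coincide. Since $\sigma^r h\in H'$ and $H'$ is stable under $\iota$, one cannot have $\sigma^r h=\iota\sigma^{r+1}h$, for this would give $\sigma^{r+1}h=\iota\sigma^r h\in H'$, contradicting that $(h,r)$ is a sector. Likewise $\sigma^r h=\sigma^{-1}h$ would force $\sigma^{r+1}h=h\in H'$, again impossible. Thus $\sigma^r h$ is always distinct from the other two, whereas $\iota\sigma^{r+1}h=\sigma^{-1}h$ may occur (the loop case); I would note that on this half-edge the first and third lines of the definition both give the value $-\sum_{i=0}^r d(\sigma^i h)$, so $d_{(h,r)}$ is well defined, and the two branches of the definition correspond exactly to these two configurations.

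The heart of the argument is then a direct substitution. The hypothesis yields $\sum_{i=0}^r d(\sigma^i h)=0$ and $\sum_{i=-1}^r d(\sigma^i h)=d(\sigma^{-1}h)$. Plugging these into the defining formulas gives $d_{(h,r)}(\iota\sigma^{r+1}h)=0$; in the non-loop case $d_{(h,r)}(\sigma^r h)=d(\iota\sigma^{r+1}h)$ and $d_{(h,r)}(\sigma^{-1}h)=d(\sigma^{-1}h)$; and in the loop case $d_{(h,r)}(\sigma^r h)=d(\sigma^{-1}h)$. Each of these values is $d$ evaluated at a single half-edge and therefore lies in $\{0,1\}$, while every remaining half-edge keeps its original degree $d(h')\in\{0,1\}$. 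This establishes that $d_{(h,r)}$ is $\{0,1\}$-valued, and combined with its $1$-homogeneity it is an admissible cut of $(H,\iota,\sigma_{(h,r)})$.

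I expect the main obstacle to be conceptual rather than computational: one must recognize that the hypothesis $d(\sigma^i h)=0$ is exactly what keeps the degrees inside $\{0,1\}$ — without it the term $d(\iota\sigma^{r+1}h)+d(\sigma^r h)$ could equal $2$ and the sum $-\sum_{i=0}^r d(\sigma^i h)$ could be strictly negative — and one must track carefully which special half-edges coincide so that the branch-based definition of $d_{(h,r)}$ is applied to the correct configuration.
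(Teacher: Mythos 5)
Your proof is correct and follows essentially the same route as the paper: the paper likewise takes the $1$-homogeneity of $d_{(h,r)}$ as already established after Definition~\ref{def:graded generalized Kauer move} and then substitutes the hypothesis $d(\sigma^{i}h)=0$ into the defining formulas to get $d_{(h,r)}(\iota\sigma^{r+1}h)=0$, $d_{(h,r)}(\sigma^{r}h)=d(\iota\sigma^{r+1}h)$, and $d_{(h,r)}(\sigma^{-1}h)\in\{d(\sigma^{-1}h),0\}$, concluding that the values lie in $\{0,1\}$. Your additional verification that $\sigma^{r}h$ is distinct from $\iota\sigma^{r+1}h$ and $\sigma^{-1}h$, and that the two branches agree when $\iota\sigma^{r+1}h=\sigma^{-1}h$, is a careful supplement the paper leaves implicit, but it does not change the argument.
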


\medskip

\begin{proof}
    With our hypothesis, we have

    \begin{equation*}
        \begin{aligned}
            &d_{(h,r)}(\iota\sigma^{r+1}h)=0 \\
            &d_{(h,r)}(\sigma^{r}h)=d(\iota\sigma^{r+1}h) \\
            &d_{(h,r)}(\sigma^{-1}h)=\left\{\begin{aligned}
                &d(\sigma^{-1}h) &&\mbox{if $\iota\sigma^{r+1}h\neq\sigma^{-1}h$} \\
                &0 &&\mbox{else}
            \end{aligned}\right.
        \end{aligned}
    \end{equation*}

    \noindent In particular, the values of $d_{(h,r)}$ are contained in \{0,1\}. 
\end{proof}

\medskip

\subsection{Moves of admissible dissections}

\medskip

Let $\Gamma=(H,\iota,\sigma)$ be a Brauer graph equipped with an admissible cut $d:H\rightarrow \Z$. This data corresponds to a marked surface $(S,M)$ equipped with an admissible dissection $\Delta$. The surface $S$ is the ribbon surface of $\Gamma$ and the admissible cut $d$ of $\Gamma$ determines in which boundary component lie the marked points. Moreover, the admissible dissection $\Delta$ is the embedding of $\Gamma$ in $(S,M)$. We refer to \cite[Section 2]{OPS} for precise definitions. Under the assumptions of Proposition \ref{prop:admissible cut}, we want to describe the admissible dissection coming from a $\Z$-graded generalized Kauer move. Let us first recall some notions on marked surfaces from \cite{OPS} and \cite{APS}.

\medskip

\begin{defn}\label{def:marked surface}
A \textit{marked surface} is a pair $(S,M)$ where

\smallskip

\begin{itemize}[label=\textbullet, font=\tiny]
    \item $S$ is an oriented surface with non-empty boundary $\partial S$;
    \item $M=M_{\textcolor{green}{\circ}}\cup M_{\textcolor{red}{\bullet}}\cup P_{\textcolor{red}{\bullet}}$ is a finite set of marked points, where elements in $M_{\textcolor{green}{\circ}}\cup M_{\textcolor{red}{\bullet}}$ are in the boundary $\partial S$ of $S$ and elements in $P_{\textcolor{red}{\bullet}}$, called \textit{punctures}, are in the interior of $S$. Each boundary component of $\partial S$ is required to contain at least one marked point in $M_{\textcolor{green}{\circ}}\cup M_{\textcolor{red}{\bullet}}$ and the $\textcolor{green}{\circ}$ and $\textcolor{red}{\bullet}$-points are alternating on a given boundary component.
    \end{itemize}
\end{defn}

\medskip

On the surface, all curves are considered up to homotopy : we say that two curves intersect if any choice of homotopic representatives intersect.

\medskip

\begin{defn}\label{def:admissible dissection}

Let $(S,M)$ be a marked surface.

\smallskip

    \begin{itemize}[label=\textbullet, font=\tiny]
        \item A \textit{$\textcolor{green}{\circ}$-arc} is a non-contractible curve with endpoints in $M_{\textcolor{green}{\circ}}$.
        \item A collection of $\textcolor{green}{\circ}$-arcs is said to be  \textit{admissible} if the only possible intersections of two of these arcs are at the endpoints and there is at least one $\textcolor{red}{\bullet}$-point in $M_{\textcolor{red}{\bullet}}\cup P_{\textcolor{red}{\bullet}}$ in each subsurface enclosed by these arcs.
        \item An \textit{admissible dissection} is a maximal admissible collection of $\textcolor{green}{\circ}$-arcs.
    \end{itemize}
    
\end{defn}

\medskip

\begin{ex} \label{ex:marked surface}
Let $\Gamma=(H,\iota,\sigma)$ be the Brauer graph defined as follows

\smallskip

    \begin{figure}[H]
        \centering
        
        \begin{tikzpicture}[scale=0.8]
            \tikzstyle{vertex}=[draw, circle, minimum size=0.2cm]
            \node[vertex] (1) at (-2,0) {};
            \node[vertex] (2) at (2,0) {};
            \node[vertex] (3) at (0,2.7) {};
            \draw[Orange] (1) to node[near start, above left]{$1^{+}$} node[near end, above left]{$1^{-}$} (3);
            \draw[VioletRed] (3) to node[near start, above right]{$2^{+}$} node[near end, above right]{$2^{-}$} (2);
            \draw[Fuchsia] (2) to node[near start, below]{$3^{+}$} node[near end, below]{$3^{-}$} (1) ;
            \node[draw, circle, Goldenrod, fill=Goldenrod, scale=0.4] at (0,3.2) {}; 
            \node[draw, circle, Goldenrod, fill=Goldenrod, scale=0.4] at (-2.5,0) {};
            \node[draw, circle, Goldenrod, fill=Goldenrod, scale=0.4] at (1.2,0.4) {};
        \end{tikzpicture}
    \end{figure}

    \smallskip

    \noindent where $j^{-}$ denotes $\iota j^{+}$ and $\sigma=(1^{-} \ 2^{+})(2^{-} \ 3^{+})(3^{-} \ 1^{+})$. We equip $\Gamma$ with an admissible cut $C$ which is represented in the previous figure by the yellow dots near each vertex. Let us denote $\Lambda$ the gentle algebra associated to $(\Gamma,C)$. In this case, the ribbon surface $S$ of $\Gamma$ is an annulus represented in the following figure

    \smallskip
    
    \begin{figure}[H]
        \centering
        
        \begin{tikzpicture}[scale=0.8]
            \tikzstyle{vertex}=[draw, circle, minimum size=0.2cm]
            \node[vertex] (1) at (-2,0) {};
            \node[vertex] (2) at (2,0) {};
            \node[vertex] (3) at (0,2.7) {};
            \draw[Orange] (1) -- (3);
            \draw[VioletRed] (3) -- (2);
            \draw[Fuchsia] (2) -- (1) ;
            \draw[thick, rounded corners] (-3,-0.5)--(0,3.5)--(3,-0.5)--cycle;
            \draw [thick, rounded corners] (-1, 0.5)--(0,1.9)--(1,0.5)--cycle;
            \draw[very thick, dashed, Aquamarine] (3.north)--(0,3.4);
            \draw[very thick, dashed, Aquamarine] (1)--(-2.9,-0.4);
            \draw[very thick, dashed, Aquamarine] (2)--(1,0.5);
        \end{tikzpicture}
    \end{figure}

    \smallskip

    \noindent In the previous figure, the blue dashed lines represent how the marked points in $S$ are attached to a boundary component thanks to the data of the admissible cut $C$. Thus, the marked surface with an admissible dissection corresponding to $\Lambda$ is 

    \smallskip
    
    \begin{figure}[H]
        \centering
        \begin{tikzpicture}[scale=0.9]
            \tikzstyle{vertex}=[draw,circle,minimum size=0.2cm, fill=white]
            \draw (0,0) circle (2);
            \draw (0,0) circle (0.8);
            \node[vertex] (1) at (0,2) {};
            \node[vertex] (2) at (0.8,0) {};
            \node[vertex] (3) at (-1.5,-1.35) {};
            \draw[Orange] (1) to[bend right=45] (3);
            \draw[VioletRed] (1) to[bend left=35] (2.65);
            \draw[Fuchsia] (2.-65) to[bend left=60] (3);
        \end{tikzpicture}
    \end{figure}
\end{ex}

\medskip

Throughout the rest of the paper, we will adopt the convention in the last example to represent an admissible cut in a Brauer graph.

\medskip

\begin{nota}
For $h\in H$, we denote by $[h]\in H/\iota$ the edge associated to $h$ in $\Gamma$. Orienting the half-edges of $\Gamma$ towards their source vertex, we may consider $\overrightarrow{h}$, the oriented \textcolor{green}{$\circ$}-arc in $S$ associated to $[h]$ under the embedding of $\Gamma$ in $(S,M)$. Note that $\overrightarrow{\iota h}=(\overrightarrow{h})^{-1}$. Moreover, for $\gamma$ an oriented curve in $S$, $|\gamma|$ denotes the unoriented curve corresponding to $\gamma$. 
\end{nota}

\medskip

With the previous notations, the admissible dissection $\Delta$ of $(S,M)$ associated to the Brauer graph $\Gamma=(H,\iota,\sigma)$ equipped with an admissible cut $d:H\rightarrow\Z$ is given by

\[\Delta=\{|\overrightarrow{h}|\, , \, [h]\in H/\iota\}\]

Let $(h_{0},r_{0})\in\mathrm{sect}(H',\sigma)$ be a sector in $\Gamma$. We say that $h\in H$ \textit{belongs to} $(h_{0},r_{0})$ if there exists $j=0,\ldots,r_{0}$ such that $h=\sigma^{j}h_{0}$. Moreover, we define for all $h\in H$ the following $\textcolor{green}{\circ}$-arc 

\begin{equation*}
    \overline{h}:=\left\{ \begin{aligned}
        &|\overrightarrow{h}| &&\mbox{if $h$ and $\iota h$ do not belong to $(h_{0},r_{0})$} \\
        &|\overrightarrow{\iota\sigma^{r_{0}+1}h_{0}} \ \cdot \ \overrightarrow{h}| &&\mbox{if $h$ belongs to $(h_{0},r_{0})$ but $\iota h$ does not} \\
        &|\overrightarrow{\iota \sigma^{r_{0}+1}h_{0}} \ \cdot \ \overrightarrow{h} \ \cdot \  \overrightarrow{\sigma^{r_{0}+1}h_{0}}| &&\mbox{if $h$ and $\iota h$ belong to $(h_{0},r_{0})$}
    \end{aligned} \right.
\end{equation*}

\noindent where $\cdot$ is the concatenation defined before Proposition 1.20 in \cite{APS}.

\medskip

\begin{lem}\label{lem:move of admissible dissection}
    Let $(H,\iota,\sigma,d)$ be a Brauer graph equipped with an admissible cut $d:H\rightarrow \Z$ and $(S,M,\Delta)$ be the corresponding  marked surface equipped with an admissible dissection. Let $H'$ be a subset of $H$ stable under $\iota$ and $(h_{0},r_{0})\in\mathrm{sect}(H',\sigma)$ be a sector in $\Gamma=(H,\iota,\sigma)$ such that $d(\sigma^{i}h)=0$ for $i=0,\ldots,r_{0}$. Then,

    \smallskip
    
    \begin{enumerate}[label=(\arabic*)]
    \item \label{item:admissible dissection 1}The collection of arcs $\Delta'=\{\overline{h} \, , \, [h]\in H/\iota\}$ is an admissible dissection of $(S,M)$, 
    
    \item \label{item:admissible dissection 2}  The marked surface equipped with an admissible dissection corresponding to $\mu^{+}_{(h_{0},r_{0})}(\Gamma,d)$ is $(S,M,\Delta')$.
    \end{enumerate}
\end{lem}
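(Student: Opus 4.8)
The plan is to read off both statements from the correspondence of \cite{OPS}, exploiting that the arcs $\overline{h}$ are \emph{by construction} curves in the fixed marked surface $(S,M)$: each $\overline{h}$ is a concatenation of arcs of $\Delta$, hence a curve in $S$ with endpoints in $M_{\textcolor{green}{\circ}}$. Thus I do not need to produce the surface of the moved graph beforehand; instead I will first prove \ref{item:admissible dissection 1}, that $\Delta'$ is an admissible dissection of $(S,M)$, and then obtain \ref{item:admissible dissection 2} by recovering the Brauer graph with admissible cut associated to $(S,M,\Delta')$ and checking that it equals $(\mu^{+}_{(h_{0},r_{0})}(\Gamma),d_{(h_{0},r_{0})})$. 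The hypothesis $d(\sigma^{i}h)=0$ for $i=0,\dots,r_{0}$ enters through Proposition \ref{prop:admissible cut}, which guarantees that $d_{(h_{0},r_{0})}$ takes values in $\{0,1\}$ and so is a genuine admissible cut.

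For \ref{item:admissible dissection 1} there are three points. First, maximality is free: $\Delta'$ is indexed by $H/\iota$, so $\mathrm{card}(\Delta')=\mathrm{card}(\Delta)$ equals the (topologically fixed) number of arcs of any admissible dissection of $(S,M)$. Second, I must show the $\overline{h}$ are pairwise non-crossing. For $h$ with neither $h$ nor $\iota h$ in the sector one has $\overline{h}=|\overrightarrow{h}|$, an arc of $\Delta$; for $h$ belonging to $(h_{0},r_{0})$ the arc $\overline{h}$ is obtained from $\overrightarrow{h}$ by prolonging its endpoint at the $\textcolor{green}{\circ}$-point $s(h_{0})$ along the edge $[\sigma^{r_{0}+1}h_{0}]$ to the $\textcolor{green}{\circ}$-point $s(\iota\sigma^{r_{0}+1}h_{0})$. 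Because $h_{0},\sigma h_{0},\dots,\sigma^{r_{0}}h_{0}$ form a consecutive block in the cyclic order at $s(h_{0})$, these prolongations can be realized simultaneously with disjoint interiors, meeting only at the common new endpoint $s(\iota\sigma^{r_{0}+1}h_{0})$. Third, each subsurface enclosed by $\Delta'$ contains a $\textcolor{red}{\bullet}$-point: since $\Delta'$ differs from $\Delta$ only in a neighbourhood of the sector, this is checked locally there, the hypothesis $d(\sigma^{i}h_{0})=0$ ensuring that the $\textcolor{red}{\bullet}$-point attached to $s(h_{0})$ lies outside the block that is moved, so no $\textcolor{red}{\bullet}$-free region is created.

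For \ref{item:admissible dissection 2} I would recover the ribbon graph with cut (equivalently the Brauer graph with admissible cut, by Theorem \ref{thm:Schroll} and \cite{OPS}) from $(S,M,\Delta')$ and match it with $\mu^{+}_{(h_{0},r_{0})}(\Gamma,d)$. The $\textcolor{green}{\circ}$-points are the vertices and the arcs the edges, so it remains to check that the cyclic order of the arcs around each $\textcolor{green}{\circ}$-point reproduces $\sigma_{(h_{0},r_{0})}=(h_{0}\ \sigma^{r_{0}+1}h_{0})\sigma(\sigma^{r_{0}}h_{0}\ \iota\sigma^{r_{0}+1}h_{0})$ and that the placement of the $\textcolor{red}{\bullet}$-points reproduces $d_{(h_{0},r_{0})}$. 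Tracking endpoints confirms the description of $\overline{h}$: when only $h$ is moved the new source becomes $s(\iota\sigma^{r_{0}+1}h_{0})$, giving $\overline{h}=|\overrightarrow{\iota\sigma^{r_{0}+1}h_{0}}\cdot\overrightarrow{h}|$, and when $[h]$ is a loop with both half-edges in the sector both ends are reattached, giving $\overline{h}=|\overrightarrow{\iota\sigma^{r_{0}+1}h_{0}}\cdot\overrightarrow{h}\cdot\overrightarrow{\sigma^{r_{0}+1}h_{0}}|$, exactly as in Figure \ref{Generalized Kauer move of a sector}.

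The main obstacle is the rigorous non-crossing and identification step: turning the combinatorial modification of $\sigma$ into an honest isotopy of arcs that slides the consecutive block of sector arcs along $[\sigma^{r_{0}+1}h_{0}]$. I would carry this out in a disk neighbourhood of the two relevant $\textcolor{green}{\circ}$-points, reading the local cyclic orders off $\sigma$ and $\sigma_{(h_{0},r_{0})}$. Particular attention is needed in the degenerate case $\iota\sigma^{r_{0}+1}h_{0}=\sigma^{-1}h_{0}$ singled out in Definition \ref{def:graded generalized Kauer move}, where the edge along which one slides has its other half-edge immediately preceding the sector at $s(h_{0})$; there the endpoint bookkeeping changes and must be verified directly, in parallel with the corresponding branch of the formula for $d_{(h_{0},r_{0})}$.
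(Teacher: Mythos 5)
Your proposal matches the paper's proof in all essentials: both realize $\Delta'$ by sliding the consecutive block of sector arcs along $|\overrightarrow{\sigma^{r_{0}+1}h_{0}}|$ inside a neighbourhood containing no $\bullet$-point, establish part (1) by checking that the resulting arcs meet only at endpoints, that no $\bullet$-free region is created, and that maximality follows from the arc count, and then prove part (2) by matching the combinatorial data with $\mu^{+}_{(h_{0},r_{0})}(\Gamma,d)$ — the paper via its polygon correspondence under which the degree-$1$ arrows of $d$ and $d_{(h_{0},r_{0})}$ land in the same polygon, you by reading off cyclic orders at the $\circ$-points and $\bullet$-placements directly, which is the same verification, both resting on the values of $d_{(h_{0},r_{0})}$ computed in Proposition \ref{prop:admissible cut}. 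One cosmetic remark: the hypothesis $d(\sigma^{i}h_{0})=0$ is not actually needed for part (1), whose argument is purely topological; it enters only through Proposition \ref{prop:admissible cut}, ensuring $d_{(h_{0},r_{0})}$ is an admissible cut so that part (2) is well posed.
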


\medskip

\begin{proof}
    The admissible dissection $\Delta$ of $(S,M)$ locally looks like

    \smallskip
    
    \begin{figure}[H]
        \centering
        \begin{tikzpicture}
        \tikzstyle{vertex}=[draw, circle, minimum size=0.2cm]
        \node[vertex] (1) at (-2,0) {};
        \node[vertex] (2) at (2,0) {};
        \draw (1)--(2) node[near start, above]{$\sigma^{r_{0}+1}h_{0}$} node[near end, above]{$\iota\sigma^{r_{0}+1}h_{0}$};
        \draw (1)--(-2.5,-1.75) node[below, left]{$\sigma^{-1}h_{0}$};
        \draw[Orange] (1)--(-1.5,-1.75) node[below, right]{$h_{0}$};
        \draw[Orange] (1)--(-1,-1.25) node[below, right]{$\sigma^{j}h_{0}$};
        \draw[Orange] (1)--(-0.5,-0.75) node[below, right]{$\sigma^{r}h_{0}$};
        \draw (2)--(2.5,-1.75) node[below]{$\sigma\iota\sigma^{r_{0}+1}h_{0}$};
        \draw[|-|, Orange] (-1.75,-2.2)--(-0,-2.2) node[below, midway]{$(h_{0},r_{0})$};        
        \end{tikzpicture}
        \label{Admissible dissection}
    \end{figure}

    \smallskip

    \noindent Considering a neighborhood of $|\overrightarrow{\sigma^{r_{0}+1}h_{0}}|$ in the interior of $S$ not containing any $\textcolor{Red}{\bullet}$-point of $M_{\textcolor{red}{\bullet}}\cup P_{\textcolor{red}{\bullet}}$, let us define another collection of $\textcolor{green}{\circ}$-arcs in $S$ that is locally given by

    \smallskip
    
    \begin{figure}[H]
        \centering
        \begin{tikzpicture}
        \tikzstyle{vertex}=[draw, circle, minimum size=0.2cm]
        \node[vertex] (1) at (-2,0) {};
        \node[vertex] (2) at (2,0) {};
        \draw (1)--(2) node[near start, above]{$\sigma^{r_{0}+1}h_{0}$} node[near end, above]{$\iota\sigma^{r_{0}+1}h_{0}$};
        \draw[Aquamarine] (0,0) ellipse (2.5cm and 0.7cm);
        \draw (1)--(-2.5,-1.75) node[below, left]{$\sigma^{-1}h_{0}$};
        \draw[Orange, rounded corners] (-1.75,-1.75) node[below]{$h_{0}$} --(-1.9,-0.35) --(2.-160);
        \draw[Orange, rounded corners] (-1.25,-2) node[below, xshift=2mm]{$\sigma^{j}h_{0}$} --(-1.5,-0.5) --(2.-140);
        \draw (2)--(2.5,-1.75) node[below]{$\sigma\iota\sigma^{r_{0}+1}h_{0}$};
        \draw[Orange, rounded corners] (-0.75,-1.75) node[below, xshift=5mm]{$\sigma^{r_{0}}h_{0}$} --(-1, -0.55) --(2.-100);
        \end{tikzpicture}
        \label{Kauer move of an admissible dissection}
    \end{figure}

    \smallskip

    \noindent Note that this new collection of $\textcolor{green}{\circ}$-arcs is the collection $\Delta'$ defined in \ref{item:admissible dissection 1}. By construction of $\Delta'$, it is easy to see that these $\textcolor{green}{\circ}$-arcs only intersect at endpoints and do not enclose a subsurface that does not contain any $\textcolor{red}{\bullet}$-point of $M_{\textcolor{red}{\bullet}}\cup P_{\textcolor{red}{\bullet}}$. This means that $\Delta'$ is an admissible collection of $\textcolor{green}{\circ}$-arcs. Since it has the same number of arcs as $\Delta$, it is in particular an admissible dissection. 
        
    For \ref{item:admissible dissection 2}, notice that there is a correspondence between the polygons enclosed by the $\textcolor{green}{\circ}$-arcs of $\Delta$ and $\Delta'$ given as follows

    \begin{figure}[H]
        \centering
        \begin{tikzpicture}[scale=0.9]
        \tikzstyle{vertex}=[draw, circle, minimum size=0.2cm]
        \begin{scope}[xshift=-3cm]
        \node[vertex] (1) at (-2,0) {};
        \node[vertex] (2) at (2,0) {};
        \draw (1)--(2) node[near start, above]{$\sigma^{r_{0}+1}h_{0}$} node[near end, above]{$\iota\sigma^{r_{0}+1}h_{0}$};
        \draw (1)--(-2.5,-1.75) node[below]{$\sigma^{-1}h_{0}$};
        \draw (1)--(-1.5,-1.75) node[below]{$h_{0}$};
        \draw (1)--(-0.5,-1.75) node[below]{$\sigma^{j}h_{0}$};
        \draw (1)--(0.5,-1.75) node[below]{$\sigma^{r_{0}}h_{0}$};
        \draw (2)--(2.5,-1.75) node[below]{$\sigma\iota\sigma^{r_{0}+1}h_{0}$};     
        \fill[pattern=horizontal lines, pattern color=Orange] (-2.5,-1.75)--(1.-95) arc(-95:-85:0.2)--(-1.5,-1.75)--cycle;
        \fill[pattern=crosshatch dots, pattern color=VioletRed] (-1.5,-1.75)--(1.-70) arc(-70:-55:0.2)--(-0.5,-1.75)--cycle;
        \fill[pattern=vertical lines, pattern color=Goldenrod] (-0.5,-1.75)--(1.-45) arc(-45:-40:0.2)--(0.5,-1.75)--cycle;
        \fill[pattern=dots, pattern color=Red] (0.5,-1.75)--(1.-40) arc(-40:0:0.2) --(2.180) arc(-180:-80:0.2) --(2.5,-1.75)--cycle;
        \end{scope}
        
        \draw[<->] (0,0)--(2,0);
        
        \begin{scope}[xshift=5cm]
        \node[vertex] (1) at (-2,0) {};
        \node[vertex] (2) at (2,0) {};
        \draw (1)--(2) node[near start, above]{$\sigma^{r_{0}+1}h_{0}$} node[near end, above]{$\iota\sigma^{r_{0}+1}h_{0}$};
        \draw (1.-105)--(-2.5,-1.75) node[below]{$\sigma^{-1}h_{0}$};
        \draw[rounded corners] (-1.75,-1.75) node[below, xshift=1mm]{$h_{0}$} --(-1.9,-0.35) --(2.-160);
        \draw[rounded corners] (-1,-1.75) node[below, xshift=2mm]{$\sigma^{j}h_{0}$} --(-1.5,-0.5) --(2.-140);
        \draw (2)--(2.5,-1.75) node[below]{$\sigma\iota\sigma^{r_{0}+1}h_{0}$};
        \draw[rounded corners] (-0.25,-1.75) node[below, xshift=5mm]{$\sigma^{r_{0}}h_{0}$} --(-1, -0.55) --(2.-100); 
        \fill[pattern=horizontal lines, pattern color=Orange] (-2.5,-1.75)--(1.-105) arc(-105:0:0.2)--(2.-180) arc(-180:-160:0.2) to[rounded corners](-1.9,-0.35)--(-1.75,-1.75)--cycle;
        \fill[pattern=crosshatch dots, pattern color=VioletRed, rounded corners] (-1.75,-1.75)--(-1.9,-0.35)--(2.-160) arc(-160:-140:0.2)--(-1.5,-0.5)--(-1,-1.75)--cycle;
        \fill[pattern=vertical lines, pattern color=Goldenrod, rounded corners] (-1,-1.75)--(-1.5,-0.5)--(2.-140) arc(-140:-100:0.2)--(-1,-0.55)--(-0.25,-1.75)--cycle;
        \fill[pattern=dots, pattern color=Red, rounded corners] (-0.25,-1.75)--(-1,-0.55)--(2.-100) arc(-100:-80:0.2)--(2.5,-1.75)--cycle;
        \end{scope}
        \end{tikzpicture}
        \label{Correspondence of polygones}
    \end{figure}

    \noindent Since the $\textcolor{green}{\circ}$-arcs of $\Delta'$ correspond to the embedding of $(H,\iota,\sigma_{(h_{0},r_{0})})$ in $(S,M)$, it suffices to check that the admissible cut $d_{(h_{0},r_{0})}:H\rightarrow \Z$ is compatible with $d:H\rightarrow \Z$ under the previous polygon correspondence, meaning that for each vertex that is not incident to a single edge, the degree 1 arrow in a special cycle associated to $d$ and $d_{(h_{0},r_{0})}$ belongs to the same polygon under this correspondence. This is clear thanks to the expression of $d_{(h_{0},r_{0})}$ computed in the proof of Proposition \ref{prop:admissible cut}. 
\end{proof}

\medskip

Note that we can iterate the last result for sectors in $\Gamma$ satisfying the assumptions of Lemma \ref{lem:successive sectors} to compute the admissible dissection of $(S,M)$ corresponding to the succession of the $\Z$-graded generalized Kauer moves of these sectors.

\medskip

\begin{rem}
    Iterating Lemma \ref{lem:move of admissible dissection}, we can prove that 

    \[\mu^{+}_{(h_{0},r_{0})}(\Gamma,d)=\mu^{+}_{(h_{0},0)}\circ\ldots\circ\mu^{+}_{(\sigma^{r_{0}}h_{0},0)}(\Gamma,d)\]
\end{rem}

\medskip

\section{Compatibility with silting mutations}

\medskip

Standard Kauer moves may be understood in terms of silting mutations in the sense of \cite{AI}. The goal of this section is to interpret generalized Kauer moves in terms of silting mutations. 

\medskip

\subsection{Silting mutations}

\medskip

In this part, let us recall some definitions and results on silting mutations from \cite{AI}. Let us denote by $A$ a finite dimensional $k$-algebra. For any object $M$ in $\per{A}$, let $\mathrm{add}(M)$ be the full subcategory of $\per{A}$ consisting of direct sums of direct summands of $M$ and $\mathrm{thick}(M)$ be the smallest triangulated subcategory of $\per{A}$ containing $M$ that is closed under direct summands.

\medskip

\begin{defn}\label{def:silting and tilting category}
     Let $M$ be an object in $\per{A}$.

    \smallskip
    
     \begin{itemize}[label=\textbullet, font=\tiny]
        \item We say that $M$ is \textit{silting} if $\Hom[\per{A}]{M}{M[>0]}=0$ and $\mathrm{thick}(M)=\per{A}$.
        \item We say that $M$ is \textit{tilting} if $M$ is silting and $\Hom[\per{A}]{M}{M[<0]}=0$.
            
     \end{itemize}
\end{defn}

\medskip

\begin{defn}\label{def:left mutation}
   Let $M$ be an object in $\per{A}$.

   \smallskip

    \begin{itemize}[label=\textbullet, font=\tiny]
        \item We say that a morphism $f:X\rightarrow Y$ is a \textit{left $\mathrm{add}(M)$-approximation} of $X$ if $Y\in\mathrm{add}(M)$ and $\Hom[\per{A}]{f}{M}$ is surjective.
        \smallskip 
        
        \begin{center}
            \begin{tikzcd}[row sep=1cm, column sep=1cm]
                X \arrow[cramped]{r}[above]{f} \arrow[cramped]{d}[left]{\forall} &Y \arrow[cramped, dashed]{dl}{\exists} \\
                M &
            \end{tikzcd}
        \end{center}

        \smallskip
        \item Let $M_{0}\in\mathrm{add}(M)$. Let us consider the following triangle in $\per{A}$

        \smallskip
        \begin{center}
        \begin{tikzcd}
            M/M_{0} \arrow[cramped]{r}[above]{f} &M_{0}' \arrow[cramped]{r} &C_{M/M_{0}} \arrow[cramped]{r} &M/M_{0}[1] 
        \end{tikzcd}
        \end{center}

        \smallskip

        \noindent where $f:M/M_{0} \rightarrow M_{0}'$ is a left $\mathrm{add}(M_{0})$-approximation of $M/M_{0}$. Thus, the \textit{left mutation} of $M$ over $M_{0}$ is the following object of $\per{A}$

        \[\mu^{+}(M;M_{0}):=M_{0}\,\oplus\, C_{M/M_{0}}\]

        \end{itemize}
\end{defn}

\medskip

\begin{thm}[Aihara-Iyama {\cite[Theorem 2.32]{AI}}] \label{thm:AI}

\phantom{} 
\begin{itemize}[label=\textbullet, font=\tiny]

\item Any left mutation of a silting object of $\per{A}$ is a silting object of $\per{A}$.

\item Let $M$ be a tilting object of $\per{A}$ and $M_{0}\in\mathrm{add}(M)$. Then, $\mu^{+}(M;M_{0})$ is tilting if and only if $M/M_{0}$ has a left $\mathrm{add}(M_{0})$-approximation $f$ such that $\Hom[\per{A}]{M_{0}}{f}$ is injective.
\end{itemize}

\end{thm}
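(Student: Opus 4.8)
The plan is to reduce every defining condition for the object $N:=\mu^{+}(M;M_{0})=M_{0}\oplus C$, where I write $C:=C_{M/M_{0}}$, to vanishing statements about the graded Hom-spaces between its two summands $M_{0}$ and $C$, and then to read off those vanishings from the long exact sequences attached to the defining triangle
\[M/M_{0}\xrightarrow{f}M_{0}'\xrightarrow{g}C\xrightarrow{h}(M/M_{0})[1]\]
by applying the functors $\Hom[\per{A}]{N}{-}$ and $\Hom[\per{A}]{-}{N}$. I would first dispose of the generation condition needed in item $(1)$: since $M_{0}'\in\mathrm{add}(M_{0})$ and the third vertex of a triangle always lies in the thick subcategory generated by the other two, the triangle shows $M/M_{0}\in\mathrm{thick}(N)$; hence $M=M_{0}\oplus(M/M_{0})\in\mathrm{thick}(N)$ and $\per{A}=\mathrm{thick}(M)\subseteq\mathrm{thick}(N)\subseteq\per{A}$, so $\mathrm{thick}(N)=\per{A}$.

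Next I would establish the positive-degree rigidity $\Hom[\per{A}]{N}{N[i]}=0$ for $i>0$, checking the four pairs of summands. The term $\Hom[\per{A}]{M_{0}}{M_{0}[i]}$ vanishes because $M_{0}$ is a summand of the silting object $M$. Feeding the triangle into the appropriate variance of $\Hom$, all contributing terms are of the shape $\Hom[\per{A}]{-}{-[j]}$ with both arguments in $\mathrm{add}(M)$ and $j>0$, hence vanish by silting-ness of $M$ — with a single borderline exception: the long exact sequence identifies $\Hom[\per{A}]{C}{M_{0}[1]}$ with the cokernel of $\Hom[\per{A}]{f}{M_{0}}$, which is surjective exactly because $f$ is a left $\mathrm{add}(M_{0})$-approximation, so this cokernel vanishes. (The self-extension term $\Hom[\per{A}]{C}{C[i]}$ needs, in addition, the auxiliary vanishing $\Hom[\per{A}]{M/M_{0}}{C[j]}=0$ in the relevant degrees, which I would obtain from one further long exact sequence.) Together with the generation, this proves $N$ is silting.

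For item $(2)$, item $(1)$ already shows $N$ is silting, so $N$ is tilting if and only if $\Hom[\per{A}]{N}{N[i]}=0$ for all $i<0$. Running the same four computations, now exploiting that $M$ is tilting (so its graded self-extensions vanish in negative degrees as well), every negative-degree term vanishes automatically except $\Hom[\per{A}]{M_{0}}{C[-1]}$; the long exact sequence from $\Hom[\per{A}]{M_{0}}{-}$ identifies this space with $\ker\bigl(\Hom[\per{A}]{M_{0}}{f}\bigr)$. Therefore $N$ is tilting if and only if $\Hom[\per{A}]{M_{0}}{f}$ is injective, which is precisely the asserted criterion.

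The routine part of this is the dévissage through the long exact sequences; the step I expect to be the main obstacle is the careful treatment of the two borderline degrees, $i=1$ in item $(1)$ and $i=-1$ in item $(2)$, where vanishing genuinely fails in general and is controlled exactly by the surjectivity of $\Hom[\per{A}]{f}{M_{0}}$ and the injectivity of $\Hom[\per{A}]{M_{0}}{f}$ respectively. A secondary subtlety I would keep track of is the dependence of $C$ on the chosen approximation $f$: to make the equivalence in item $(2)$ clean one works with a left minimal approximation, and I would record that the mutation is well defined up to cancellation of trivial summands.
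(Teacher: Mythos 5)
The paper does not prove this statement at all---it is imported verbatim from Aihara--Iyama \cite[Theorem 2.32]{AI}---so your attempt can only be measured against the argument there, whose overall strategy (generation via $\mathrm{thick}$, d\'evissage through the long exact sequences of the defining triangle, pivot terms controlled by the approximation property) you have correctly reproduced in outline: the generation step is right, the identification $\Hom[\per{A}]{C}{M_{0}[1]}\cong\coker{\Hom[\per{A}]{f}{M_{0}}}$ is right, and $\Hom[\per{A}]{M_{0}}{C[-1]}\cong\ker(\Hom[\per{A}]{M_{0}}{f})$ is exactly the pivot of item (2). However, your parenthetical treatment of the self-extension term $\Hom[\per{A}]{C}{C[1]}$ contains a genuine gap. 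The contravariant long exact sequence sandwiches $\Hom[\per{A}]{C}{C[i]}$ between $\Hom[\per{A}]{M/M_{0}}{C[i-1]}$ and $\Hom[\per{A}]{M_{0}'}{C[i]}$, so at the borderline degree $i=1$ your ``auxiliary vanishing'' would have to be $\Hom[\per{A}]{M/M_{0}}{C}=0$, and this is false in general: the same long exact sequence identifies $\Hom[\per{A}]{M/M_{0}}{C}$ with the cokernel of $\Hom[\per{A}]{M/M_{0}}{M/M_{0}}\rightarrow\Hom[\per{A}]{M/M_{0}}{M_{0}'}$, which has no reason to vanish (already enlarging an approximation to $\binom{f}{0}:M/M_{0}\rightarrow M_{0}'\oplus M_{0}''$ replaces $C$ by $C\oplus M_{0}''$ and makes this space nonzero). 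This is precisely the point where Aihara--Iyama's proof needs its one nontrivial move: one shows that the \emph{image} of the connecting map $\Hom[\per{A}]{M/M_{0}}{C}\rightarrow\Hom[\per{A}]{C}{C[1]}$, $u\mapsto u[1]\circ h$, is zero, because any $u$ factors as $gv$ with $v:M/M_{0}\rightarrow M_{0}'$ (as $\Hom[\per{A}]{M/M_{0}}{(M/M_{0})[1]}=0$), $v$ factors through the approximation $f$, and $f[1]\circ h=0$ since consecutive maps in a triangle compose to zero. Alternatively, you can repair the step inside your own framework by routing through the second argument: $\Hom[\per{A}]{C}{M_{0}'[1]}\rightarrow\Hom[\per{A}]{C}{C[1]}\rightarrow\Hom[\per{A}]{C}{(M/M_{0})[2]}$, where the left term vanishes by your cokernel computation applied to $M_{0}'\in\mathrm{add}(M_{0})$ and the right term by one more contravariant sequence.

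A milder inaccuracy of the same flavor occurs in item (2): it is not true that ``every negative-degree term vanishes automatically except $\Hom[\per{A}]{M_{0}}{C[-1]}$''. The term $\Hom[\per{A}]{C}{C[-1]}$ is likewise not automatic from $M$ being tilting; in the ``if'' direction it must itself be deduced from the injectivity hypothesis, e.g.\ via the contravariant sandwich $\Hom[\per{A}]{M/M_{0}}{C[-2]}\rightarrow\Hom[\per{A}]{C}{C[-1]}\rightarrow\Hom[\per{A}]{M_{0}'}{C[-1]}$, whose left term vanishes automatically and whose right term is $\ker(\Hom[\per{A}]{M_{0}'}{f})$, zero once $\Hom[\per{A}]{M_{0}}{f}$ is injective. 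So the correct statement is that all remaining terms \emph{reduce to} the pivot, not that they vanish outright. Your closing remark on minimal approximations and well-definedness up to cancellation of $\mathrm{add}(M_{0})$-summands is the right bookkeeping for the ``there exists an approximation $f$'' phrasing of the criterion. In summary: the architecture of your proof is the standard one and most computations are correct, but as written the crucial degrees $\Hom[\per{A}]{C}{C[1]}$ (and, conditionally, $\Hom[\per{A}]{C}{C[-1]}$) do not go through, and fixing them requires exactly the factorization-through-the-approximation argument that constitutes the heart of Aihara--Iyama's proof.
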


\medskip

\begin{rem}
    We can define dually a right approximation and a right mutation. Moreover, we have a dual version of Theorem \ref{thm:AI} for right mutations.
\end{rem}

\medskip

Theorem \ref{thm:AI} allows us to prove the following result on tilting mutations in symmetric algebras, which can be applied for Brauer graph algebras thanks to Theorem \ref{thm:Schroll}.

\medskip

\begin{prop} \label{prop:tilting mutation in symmetric algebra}
Let $A$ be a symmetric finite dimensional $k$-algebra and $e$ be an idempotent of $A$. Then, the left mutation $\mu^{+}(A;(1-e)A)$  is tilting in $\per{A}$.
\end{prop}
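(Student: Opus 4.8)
The plan is to apply the tilting criterion given by the second bullet of Theorem \ref{thm:AI}. Since $A$ is symmetric it is in particular self-injective, and the free module $A$, viewed as a stalk complex, is a tilting object of $\per{A}$: indeed $\Hom[\per{A}]{A}{A[n]}=0$ for $n\neq 0$ and $\mathrm{thick}(A)=\per{A}$. Writing the idempotent decomposition $A=eA\oplus(1-e)A$ and setting $M=A$, $M_{0}=(1-e)A$, we have $M/M_{0}=eA$. By Theorem \ref{thm:AI} it then suffices to produce a left $\mathrm{add}((1-e)A)$-approximation $f$ of $eA$ for which $\Hom[\per{A}]{(1-e)A}{f}$ is injective.

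To construct $f$ I would translate everything into elements of $A$ via the standard identifications $\Hom[A]{gA}{X}\cong Xg$ for an idempotent $g$. This gives $\Hom[A]{eA}{(1-e)A}\cong (1-e)Ae$ and $\Hom[A]{(1-e)A}{eA}\cong eA(1-e)$, and a direct check shows that the composition map induced by an element $p\in(1-e)Ae$ sends $c\in eA(1-e)$ to $pc\in(1-e)A(1-e)$. Choosing a finite generating set $p_{1},\dots,p_{m}$ of $(1-e)Ae$ as a left $(1-e)A(1-e)$-module and letting $f\colon eA\to (1-e)A^{m}$ be the corresponding map, the universal factorization property translates exactly into the statement that the $p_{i}$ generate $(1-e)Ae$ over $(1-e)A(1-e)$; hence $f$ is a left $\mathrm{add}((1-e)A)$-approximation of $eA$.

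It then remains to check that $\Hom[\per{A}]{(1-e)A}{f}$ is injective, i.e. that $c\in eA(1-e)$ with $p_{i}c=0$ for all $i$ must vanish. Since the $p_{i}$ generate $(1-e)Ae$, this forces $pc=0$ for every $p\in(1-e)Ae$. Here I would invoke the symmetric structure: $A$ carries a nondegenerate, associative, symmetric bilinear form $\langle -,-\rangle$, whose restriction yields a perfect pairing between $eA(1-e)$ and $(1-e)Ae$ (for $x\in eA(1-e)$ and $z\in A$ one has $\langle x,z\rangle=\langle x,(1-e)ze\rangle$ with $(1-e)ze\in(1-e)Ae$, so nondegeneracy is inherited). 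From $pc=0$ and symmetry/associativity one gets $\langle c,sp\rangle=\langle pc,s\rangle=0$ for all $p\in(1-e)Ae$ and $s\in(1-e)A(1-e)$; taking $s=1-e$ shows $c$ is orthogonal to all of $(1-e)Ae$, whence $c=0$. This yields the injectivity and hence, by Theorem \ref{thm:AI}, the tilting property of $\mu^{+}(A;(1-e)A)$.

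The main obstacle is precisely this last injectivity step: the approximation map $f$ is \emph{not} injective as a module homomorphism, because $eA$ is an injective module that does not lie in $\mathrm{add}((1-e)A)$, so one cannot simply invoke exactness of $\Hom[A]{(1-e)A}{-}$; the symmetry of $A$, through the nondegeneracy of the symmetrizing form pairing $eA(1-e)$ with $(1-e)Ae$, is exactly what controls the kernel. Alternatively, one could bypass the explicit approximation: the first bullet of Theorem \ref{thm:AI} already shows $\mu^{+}(A;(1-e)A)$ is silting, and for a symmetric algebra the Nakayama functor on $\per{A}$ is isomorphic to the identity, so Auslander--Reiten--Serre duality gives $\dim\Hom[\per{A}]{N}{N[n]}=\dim\Hom[\per{A}]{N}{N[-n]}$ for any perfect $N$; thus every silting object of $\per{A}$ is automatically tilting.
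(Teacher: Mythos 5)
Your proof is correct and carries out exactly the argument the paper intends: the paper states this proposition without a written proof, deriving it from Theorem \ref{thm:AI}, and your explicit left $\mathrm{add}((1-e)A)$-approximation $eA\rightarrow ((1-e)A)^{m}$ built from generators of $(1-e)Ae$ over $(1-e)A(1-e)$, together with the injectivity of $\Hom[\per{A}]{(1-e)A}{f}$ via the nondegenerate pairing between $eA(1-e)$ and $(1-e)Ae$ coming from the symmetrizing form, supplies precisely the verification the paper leaves to the reader. Your closing alternative (left mutation is silting by the first bullet of Theorem \ref{thm:AI}, and silting equals tilting over a symmetric algebra since the Nakayama functor is isomorphic to the identity on $\per{A}$) is also valid and is the standard shortcut.
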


\medskip

\subsection{Compatibility for gentle algebras}

\medskip

In this part, we want to describe the silting mutation of a gentle algebra $\Lambda$ thanks to moves of the admissible dissection defining $\Lambda$. Let $\Lambda$ be a gentle algebra corresponding to a Brauer graph $\Gamma=(H,\iota,\sigma)$ equipped with an admissible cut $d:H\rightarrow\Z$. Let $B$ be the Brauer graph algebra associated to $\Gamma$. Recall that $\Lambda$ may be seen as the degree 0 subalgebra of $B$. For any subset $H'$ of $H$ stable under $\iota$, let us denote 

\[e_{H'}B=\bigoplus_{[h]\in H'/\iota}e_{[h]}B\]

\noindent where $e_{[h]}B$ is the indecomposable projective $B$-module associated to the edge $[h]\in H/\iota$ in $\Gamma$. Moreover, we denote by

\[\alpha(h,H'):e_{[h]}B\longrightarrow e_{[\sigma^{r(h)+1}h]}B\]

\noindent the morphism induced by the path in $B$ from $h$ to  $\sigma^{r(h)+1}h$, where $r(h)+1=\min\{r\ge 0\,\vert\,\sigma^{r}h\notin H'\}$. If $\sigma^{r}h\in H'$ for all $r\ge 0$, by an abuse of notation, we set $e_{[\sigma^{r(h)+1}h]}B=0$. Notice that for $h\notin H'$, $\alpha(h,H')=\id{e_{[h]}B}$. Let us denote $d(\alpha(h,H'))$ the degree of this morphism : it is the sum of $d(\sigma^{i}h)$ for $i=0,\ldots,r(h)$.

\medskip

\begin{prop}
    Let $H'$ be a subset of $H$ stable under $\iota$. For all $h\in H$, the left $\mathrm{add}(e_{H\backslash H'}B)$-approximation of $e_{[h]}B$ in $\per{B}$ is given by

    \smallskip
    
    \begin{center}
        \begin{tikzcd}[column sep=2cm]
            e_{[h]}B \arrow{r}{\begin{pmatrix} \alpha(h,H') \\ \alpha(\iota h,H')\end{pmatrix}} &e_{[\sigma^{r(h)+1}h]}B\oplus e_{[\sigma^{r(\iota h)+1}\iota h]}B
        \end{tikzcd}
    \end{center}
\end{prop}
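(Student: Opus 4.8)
The plan is to reduce the statement to an explicit computation with paths of $B$. Since $e_{[h]}B$ and the two summands of $Y:=e_{[\sigma^{r(h)+1}h]}B\oplus e_{[\sigma^{r(\iota h)+1}\iota h]}B$ are projective right $B$-modules, regarded as stalk complexes in degree $0$, morphisms between them in $\per{B}$ coincide with homomorphisms of $B$-modules, and $\Hom[B]{e_{[i]}B}{e_{[j]}B}\cong e_{[j]}Be_{[i]}$ is identified with the span of the paths of $B$ starting at $[i]$ and ending at $[j]$; under this identification $\alpha(h,H')$ is the class of the path $[h]\to[\sigma h]\to\cdots\to[\sigma^{r(h)+1}h]$. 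First I would check that $Y\in\mathrm{add}(e_{H\backslash H'}B)$: by definition of $r(h)$ we have $\sigma^{r(h)+1}h\notin H'$, and since $H'$ is stable under $\iota$ the whole edge $[\sigma^{r(h)+1}h]$ lies in $(H\backslash H')/\iota$; likewise for $[\sigma^{r(\iota h)+1}\iota h]$. Thus $f$ is a genuine morphism into $\mathrm{add}(e_{H\backslash H'}B)$, and it remains to prove that $\Hom[\per{B}]{f}{e_{H\backslash H'}B}$ is surjective, i.e.\ that every morphism from $e_{[h]}B$ to a summand $e_{[g]}B$ of $e_{H\backslash H'}B$ factors through $f$.

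The key structural input is a description of the nonzero paths of $B$ issued from the vertex $[h]$. At $[h]$ there are exactly two outgoing arrows, towards $[\sigma h]$ and towards $[\sigma\iota h]$, which start the two special cycles through the edge $[h]$ (at the vertices $s(h)$ and $s(\iota h)$). By relation \ref{item3:Brauer graph algebra}, any composition that leaves a special cycle vanishes, so a nonzero path starting at $[h]$ must stay on one of these two cycles; hence it is, up to scalar, an initial segment $[h]\to[\sigma h]\to\cdots\to[\sigma^{k}h]$ or $[h]\to[\sigma\iota h]\to\cdots\to[\sigma^{k}\iota h]$. In this language $\alpha(h,H')$ and $\alpha(\iota h,H')$ are precisely these two segments, truncated at the first half-edge leaving $H'$. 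I would record here the two degenerate situations: if $h\notin H'$ then $r(h)=-1$, $\alpha(h,H')=\id{e_{[h]}B}$ and $f$ is a split monomorphism, so the approximation property is immediate; and if the whole $\sigma$-orbit of $h$ lies in $H'$ then that arm never reaches a vertex outside $H'$, consistently with the convention $e_{[\sigma^{r(h)+1}h]}B=0$.

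By linearity it then suffices to factor a single basis path $\phi\colon e_{[h]}B\to e_{[g]}B$ with $[g]\in(H\backslash H')/\iota$. Such a $\phi$ lies on one of the two arms, say the first, so $\sigma^{k}h\in\{g,\iota g\}$ for some $k$; since $[g]\in(H\backslash H')/\iota$ we have $\sigma^{k}h\notin H'$, whereas $\sigma^{i}h\in H'$ for $0\le i\le r(h)$, which forces $k\ge r(h)+1$. Consequently $\phi$ decomposes as $\phi=\phi'\,\alpha(h,H')$, where $\phi'$ is the subpath $[\sigma^{r(h)+1}h]\to\cdots\to[\sigma^{k}h]=[g]$, itself a nonzero path as it remains on the same special cycle. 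Taking $\psi=\begin{pmatrix}\phi'&0\end{pmatrix}\colon Y\to e_{[g]}B$ yields $\psi f=\phi$, and the symmetric choice $\psi=\begin{pmatrix}0&\phi''\end{pmatrix}$ deals with a path on the second arm through $\alpha(\iota h,H')$. This establishes surjectivity, hence that $f$ is a left $\mathrm{add}(e_{H\backslash H'}B)$-approximation.

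The part requiring the most care is the second paragraph: pinning down that the only nonzero paths out of $[h]$ are the two cyclic arms, so that $f$ genuinely captures all of $\Hom[B]{e_{[h]}B}{-}$ into $\mathrm{add}(e_{H\backslash H'}B)$, together with the bookkeeping of the degenerate cases and the verification that the truncation index satisfies $k\ge r(h)+1$ (and that no wrap-around around a full special cycle occurs, which is excluded because $[g]\neq[h]$ whenever $h\in H'$ and $g\notin H'$). Once the arm description and this inequality are in place, the factorization is purely formal.
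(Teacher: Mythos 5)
Your argument is correct; note that the paper states this proposition without any proof, treating it as a routine consequence of the structure of projectives over a Brauer graph algebra, so there is no written proof to compare against --- your proposal supplies exactly the verification the paper leaves implicit. The key step is the one you isolate: by relations \ref{item3:Brauer graph algebra} (and \ref{item1:Brauer graph algebra}, \ref{item2:Brauer graph algebra}) a nonzero path out of the quiver vertex $[h]$ is an initial segment of one of the two special cycles through $[h]$, so $e_{[h]}B$ is biserial with two ``arms'' indexed by $\sigma$-iterates of $h$ and of $\iota h$; your inequality $k\ge r(h)+1$, forced by $[g]\in(H\backslash H')/\iota$ together with the $\iota$-stability of $H'$, then makes every basis morphism factor through the stated map. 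You also correctly dispose of the boundary cases: the full-cycle (socle) path is excluded since it ends at $[h]\in H'/\iota$, the case $h\notin H'$ gives a split monomorphism via $\alpha(h,H')=\id{e_{[h]}B}$, and the convention $e_{[\sigma^{r(h)+1}h]}B=0$ is consistent because an arm whose whole $\sigma$-orbit stays in $H'$ carries no nonzero path to any $e_{[g]}B$ with $[g]\notin H'/\iota$. Two very minor points of care, neither of which damages the proof: at a valence-one endpoint one of the two arms may degenerate, so ``exactly two outgoing arrows'' should read ``at most two'', which only removes paths to be factored; and when $[g]$ is a loop in the $\sigma$-orbit of $h$ the edge $[g]$ is hit twice along one arm, but as you implicitly note each occurrence gives a separate basis path satisfying $k\ge r(h)+1$, so the factorization applies to each.
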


\medskip

If $d(\alpha(h,H'))=0$, we denote by $\alpha(h,H')_{0}:e_{[h]}\Lambda\rightarrow e_{[\sigma^{r(h)+1}h]}\Lambda$ the morphism induced by $\alpha(h,H')$ on the degree 0 parts. 

\medskip

\begin{prop}[\cite{OPS}]\label{prop:OPS}
Let $(S,M,\Delta)$ be the marked surface equipped with an admissible dissection corresponding to $\Lambda$. Let $H'$ be a subset of $H$ stable under $\iota$ and let $h\in H$ such that $d(\alpha(h,H'))=d(\alpha(\iota h,H'))=0$. Then,

\smallskip

\begin{enumerate}[label=(\arabic*)]
    \item The left $\mathrm{add}(e_{H\backslash H'}\Lambda)$-approximation of $e_{[h]}\Lambda$ in $\per{\Lambda}$ is given by

    \smallskip
    
    \begin{center}
        \begin{tikzcd}[column sep=2cm]
            e_{[h]}\Lambda \arrow{r}{\begin{pmatrix} \alpha(h,H')_{0} \\ \alpha(\iota h,H')_{0}\end{pmatrix}} &e_{[\sigma^{r(h)+1}h]}\Lambda\oplus e_{[\sigma^{r(\iota h)+1}\iota h]}\Lambda
        \end{tikzcd}
    \end{center}

    \smallskip

    \item The cone of the previous morphism is an indecomposable string object in $\per{\Lambda}$ given in $(S,M)$ by the \textcolor{green}{$\circ$}-arc $|\overrightarrow{\iota\sigma^{r(h)+1}h} \ \cdot \ \overrightarrow{h} \ \cdot \ \overrightarrow{\sigma^{r(\iota h)+1}\iota h}|$.

    \smallskip

    \begin{figure}[H]
        \centering
         \begin{tikzpicture}[scale=0.9]
        \tikzstyle{vertex}=[draw, circle, minimum size=0.2cm]
        \node[vertex] (1) at (-2,0) {};
        \node[vertex] (2) at (2,0) {};
        \node[vertex] (3) at (0,1.7) {};
        \node[vertex] (4) at (-0,-1.7) {};
        \draw (1)--(2) node[very near start, below]{$h$} node[very near end, above]{$\iota h$};
        \draw (1)--(3) node[near start, above left]{$\sigma^{r(h)+1}h$} node[near end, above left]{$\iota\sigma^{r(h)+1}h$};
        \node[draw, circle, fill=Goldenrod, Goldenrod, scale=0.4] at (-2,-0.5) {};
        \draw (2)--(4) node[near start, below right]{$\sigma^{r(\iota h)+1}\iota h$} node[near end, below right]{$\iota\sigma^{r(\iota h)+1}\iota h$};
        \node[draw, circle, fill=Goldenrod, Goldenrod, scale=0.4] at (2,0.5) {};
        \draw[rounded corners, thick, dashed, Aquamarine] (3.-100)--(-1.5,0.15)--(1.5,-0.15) node[midway, above]{cone} --(4.80);
        \end{tikzpicture}
        \caption{Cone of the approximation of $e_{[h]}\Lambda$}
        \label{Cone of the approximation}
    \end{figure}
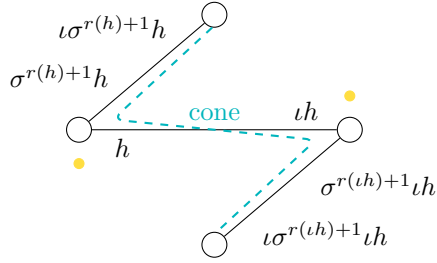

\end{enumerate}
\end{prop}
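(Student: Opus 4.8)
The plan is to prove the two statements by different routes: part (1) by restricting the $B$-level approximation of the preceding proposition to degree $0$, and part (2) by translating the mapping cone into the topological model of \cite{OPS}.

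For (1), recall from Theorem \ref{thm:Schroll} that $\Lambda$ is the degree $0$ subalgebra of $B$ for the $\Z$-grading induced by the admissible cut $d$. The previous proposition already identifies the left $\mathrm{add}(e_{H\backslash H'}B)$-approximation of $e_{[h]}B$ as the map with components $\alpha(h,H')$ and $\alpha(\iota h,H')$. First I would record that $[\sigma^{r(h)+1}h]\notin H'$ by the very definition of $r(h)$, so the target indeed lies in $\mathrm{add}(e_{H\backslash H'}B)$, and likewise for $\iota h$. The hypothesis $d(\alpha(h,H'))=d(\alpha(\iota h,H'))=0$ then forces both components to be homogeneous of degree $0$, hence to restrict to genuine morphisms $\alpha(h,H')_{0},\alpha(\iota h,H')_{0}$ of projective $\Lambda$-modules. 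To verify the approximation property directly in $\m{\Lambda}$, I would argue with string combinatorics: any nonzero $g\colon e_{[h]}\Lambda\to e_{[k]}\Lambda$ with $[k]\notin H'$ is represented by an element of $e_{[k]}\Lambda e_{[h]}$, i.e. a $k$-combination of paths from the vertex $[h]$. By the gentle relations such a path is forced along the $\sigma$-orbit in one of the two directions $h$ or $\iota h$, and since it must leave $H'$ to reach $[k]$, it factors through the first exit half-edge $\sigma^{r(h)+1}h$ (resp. $\sigma^{r(\iota h)+1}\iota h$); thus $g$ factors through $\alpha(h,H')_{0}$ (resp. $\alpha(\iota h,H')_{0}$). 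This shows that $\Hom[\per{\Lambda}]{f}{e_{H\backslash H'}\Lambda}$ is surjective, so $f$ is a left approximation.

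For (2), I would pass to the geometric model of \cite{OPS}, where the indecomposable projective $e_{[g]}\Lambda$ is the arc $|\overrightarrow{g}|$ of the dissection $\Delta$ and a homogeneous degree $0$ morphism of projectives corresponds to an oriented boundary path meeting the two arcs at a common $\textcolor{green}{\circ}$-endpoint. The two components of $f$ are supported at the two $\textcolor{green}{\circ}$-endpoints of $|\overrightarrow{h}|$, namely the source vertices of $h$ and of $\iota h$. The mapping cone of a morphism of this shape is computed in \cite{OPS} by smoothing (concatenating) the three arcs at these common endpoints; carrying out the smoothing produces the single $\textcolor{green}{\circ}$-arc obtained by concatenating $\overrightarrow{\iota\sigma^{r(h)+1}h}$, $\overrightarrow{h}$ and $\overrightarrow{\sigma^{r(\iota h)+1}\iota h}$, exactly as drawn in Figure \ref{Cone of the approximation}. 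Since the result is a single connected curve, the corresponding string object is indecomposable, which gives (2).

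The hardest point is the orientation bookkeeping in (2): one must match the \cite{OPS} smoothing conventions with the rule that half-edges are oriented towards their source vertices, so that the concatenation yields precisely $\overrightarrow{\iota\sigma^{r(h)+1}h}\cdot\overrightarrow{h}\cdot\overrightarrow{\sigma^{r(\iota h)+1}\iota h}$ rather than its inverse or a different regluing. The vanishing of the two degrees, which places both endpoints on the boundary segments prescribed by $d$, is exactly what makes this local smoothing available and keeps the cone an indecomposable string object rather than a band object.
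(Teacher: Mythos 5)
The paper does not actually prove this proposition: it is imported wholesale from \cite{OPS} (with the concatenation operation borrowed from \cite{APS}), so your argument is necessarily a different route, namely a direct reconstruction. Your reconstruction is essentially sound. For (1), since all objects involved are projective modules concentrated in degree $0$, morphisms in $\per{\Lambda}$ reduce to module morphisms, and your string-combinatorial factorization is the right argument: a nonzero path out of $[h]$ must follow the cyclic order around one of the two endpoint vertices, its first $r(h)$ (resp.\ $r(\iota h)$) steps stay on edges of $H'$ because $H'$ is $\iota$-stable, so it admits $\alpha(h,H')_{0}$ (resp.\ $\alpha(\iota h,H')_{0}$) as a prefix, which moreover survives in $\Lambda$ precisely because of the degree-$0$ hypothesis; this also dovetails with the unnamed $B$-level proposition stated just before. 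The one soft spot is in (2): the smoothing result you invoke computes the cone of a morphism realized by a \emph{single} oriented intersection, whereas here $f$ has two components into a direct sum, so a single application of that result does not literally apply. You can repair this either by iterating (cone off one component, then the induced map on that cone, via the octahedral axiom), or more directly by noting that the cone is by definition the two-term complex $e_{[h]}\Lambda\rightarrow e_{[\sigma^{r(h)+1}h]}\Lambda\oplus e_{[\sigma^{r(\iota h)+1}\iota h]}\Lambda$, which is visibly a string complex whose homotopy string is the two-letter walk given by the paths $\alpha(h,H')_{0}$ and $\alpha(\iota h,H')_{0}$ (reduced, since the two paths start with distinct arrows), and whose arc under the \cite{OPS} dictionary is exactly the stated concatenation; indecomposability is then automatic for string objects. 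Finally, your closing remark slightly misattributes the role of the hypothesis: $d(\alpha(h,H'))=d(\alpha(\iota h,H'))=0$ is what makes the two components exist in the degree-$0$ subalgebra $\Lambda$ in the first place, not what rules out band objects --- the concatenated curve has endpoints at $\textcolor{green}{\circ}$-points of the boundary, so it can never close up into a loop regardless of degrees.
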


\medskip

We recall that the yellow dots in the previous figure represent the admissible cut $d$ defining $\Lambda$ as in Example \ref{ex:marked surface}. We can now use this description of the cones of approximations in 
$\Lambda$ to understand the left mutation of $\Lambda$ in terms of moves of its admissible dissection.

\medskip

\begin{thm}\label{thm:mutation in gentle algebra}
    Let $\Lambda$ be a gentle algebra corresponding to a Brauer graph $\Gamma=(H,\iota,\sigma)$ equipped with an admissible cut $d:H\rightarrow\Z$. Let $H'$ be a subset of $H$ stable under $\iota$ such that for all $h\in H'$, $d(\alpha(h,H'))=0$. Then,

    \smallskip
    
    \begin{enumerate}[label=(\arabic*)]
        \item \label{item1:mutation in gentle algebra} The left mutation $\mu^{+}(\Lambda;e_{H\backslash H'}\Lambda)$ is tilting in $\per{\Lambda}$.
        \item \label{item2:mutation in gentle algebra} $\mu^{+}_{\pi}(\Gamma,d)$ is a Brauer graph with an admissible cut, where $\pi$ denotes the product of all maximal sectors $(h,r)\in\mathrm{Sect}(H',\sigma)$ in $\Gamma$. The gentle algebra associated to $\mu^{+}_{\pi}(\Gamma,d)$ is the endomorphism algebra of $\mu^{+}(\Lambda;e_{H\backslash H'}\Lambda)$.
    \end{enumerate}
\end{thm}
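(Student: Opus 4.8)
The plan is to deduce both statements from the surface model of $\per{\Lambda}$, reducing the tilting property in part \ref{item1:mutation in gentle algebra} to Theorem \ref{thm:AI} and the combinatorics in part \ref{item2:mutation in gentle algebra} to an iteration of Lemma \ref{lem:move of admissible dissection} and Proposition \ref{prop:admissible cut}. Throughout I regard $\Lambda$ as the free module, a tilting object of $\per{\Lambda}$, and I set $M_{0}=e_{H\backslash H'}\Lambda$, so that $\Lambda/M_{0}=e_{H'}\Lambda$. The hypothesis $d(\alpha(h,H'))=0$ for all $h\in H'$, together with the $\iota$-stability of $H'$, gives $d(\alpha(\iota h,H'))=0$ as well, so Proposition \ref{prop:OPS} applies to every $h\in H'$: the left $\mathrm{add}(M_{0})$-approximation $f$ of $e_{H'}\Lambda$ is the direct sum over $[h]\in H'/\iota$ of the maps described there, and each cone is the indecomposable string object carried by the explicit \textcolor{green}{$\circ$}-arc of Proposition \ref{prop:OPS}\,\ref{item:admissible dissection 2}.

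For part \ref{item2:mutation in gentle algebra} I would first note that the maximal sectors of elements of $H'$ are pairwise distinct and hence satisfy the hypotheses of Lemma \ref{lem:successive sectors}, so the successive $\Z$-graded generalized Kauer moves over all of them are defined and, by Proposition \ref{prop:commutativity of graded generalized Kauer moves}, their composite $\mu^{+}_{\pi}(\Gamma,d)$ is independent of the order. Since $d$ takes values in $\{0,1\}$ and $\sum_{i=0}^{r(h)}d(\sigma^{i}h)=d(\alpha(h,H'))=0$, each maximal sector $(h,r)$ satisfies $d(\sigma^{i}h)=0$ for $i=0,\ldots,r$, which is precisely the hypothesis of Proposition \ref{prop:admissible cut}. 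Iterating that proposition (the degrees on the untreated sectors are unchanged at each step, since a single move only alters half-edges lying outside $H'$ together with the last half-edge of the treated sector) shows that $d_{\pi}$ is again an admissible cut, so $\mu^{+}_{\pi}(\Gamma,d)$ is a Brauer graph with an admissible cut. Iterating Lemma \ref{lem:move of admissible dissection} then identifies the marked surface with admissible dissection attached to $\mu^{+}_{\pi}(\Gamma,d)$ as $(S,M,\Delta')$ with $\Delta'=\{\overline{h}\,,\,[h]\in H/\iota\}$; unwinding $\overline{h}$ and using $\sigma^{r(h)+1}h=\sigma^{r+1}h_{1}$ for the maximal sector $(h_{1},r)$ containing $h$, one checks that the arcs $\overline{h}$ with $[h]\in H'/\iota$ are exactly the arcs carrying the cones of Proposition \ref{prop:OPS}\,\ref{item:admissible dissection 2}, while $\overline{h}=|\overrightarrow{h}|$ for $[h]\notin H'/\iota$ recovers the unchanged summands $e_{[h]}\Lambda$.

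Combining these facts proves part \ref{item1:mutation in gentle algebra}. By the first bullet of Theorem \ref{thm:AI}, $\mu^{+}(\Lambda;M_{0})$ is silting, and by the previous paragraph its indecomposable summands are precisely the arcs of the admissible dissection $\Delta'$; equipped with the admissible cut $d_{\pi}$, this dissection corresponds under \cite{OPS} and Theorem \ref{thm:Schroll} to a gentle algebra, which forces $\Hom[\per{\Lambda}]{\mu^{+}(\Lambda;M_{0})}{\mu^{+}(\Lambda;M_{0})[<0]}=0$ and hence makes $\mu^{+}(\Lambda;M_{0})$ tilting. Equivalently, one may verify the criterion of the second bullet of Theorem \ref{thm:AI} directly, that $\Hom[\per{\Lambda}]{M_{0}}{f}$ is injective, by describing homomorphisms between the indecomposable projectives through paths in the gentle algebra, the admissibility of $\Delta'$ being exactly what guarantees this injectivity. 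Finally, the endomorphism algebra of the tilting object $\mu^{+}(\Lambda;M_{0})$ is read off from its summands: by the OPS correspondence it is the gentle algebra associated to $(S,M,\Delta')$, that is, to $\mu^{+}_{\pi}(\Gamma,d)$, which gives the remaining assertion of part \ref{item2:mutation in gentle algebra}.

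I expect the main obstacle to be the precise matching in part \ref{item2:mutation in gentle algebra} between the cones of Proposition \ref{prop:OPS}\,\ref{item:admissible dissection 2} and the arcs $\overline{h}$ obtained by iterating Lemma \ref{lem:move of admissible dissection}, together with the bookkeeping ensuring that the grading $d_{\pi}$ is the admissible cut that realizes the endomorphism algebra as the gentle algebra of $\mu^{+}_{\pi}(\Gamma,d)$ itself, rather than merely a derived-equivalent algebra. By contrast, once the summands are seen to form an admissible dissection carrying an admissible cut, the tilting property is comparatively soft.
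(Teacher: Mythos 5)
Your outline of part \ref{item2:mutation in gentle algebra} follows the paper's own route: Proposition \ref{prop:admissible cut} iterated (using Lemma \ref{lem:successive sectors} and the disjointness of distinct maximal sectors) to get the admissible cut, Proposition \ref{prop:commutativity of graded generalized Kauer moves} for order-independence, and iterated Lemma \ref{lem:move of admissible dissection} together with Proposition \ref{prop:OPS} and \cite{APS} to identify $(S,M,\Delta')$ with the model of $\mu^{+}_{\pi}(\Gamma,d)$; your remark that $d(\alpha(h,H'))=0$ forces $d(\sigma^{i}h)=0$ termwise, since $d$ takes values in $\{0,1\}$, is exactly how the hypothesis of Proposition \ref{prop:admissible cut} is met. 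But the arc-matching you flag as ``the main obstacle'' and leave undone is not routine bookkeeping: the paper carries it out by a case analysis, the nontrivial case being when $h$ and $\iota h$ lie in \emph{distinct} maximal sectors $(h_{1},r_{1})\neq(h_{2},r_{2})$, where one must compose two moves and track orientations (the edge $[h]$ first becomes $|\overrightarrow{\iota\sigma^{r_{1}+1}h_{1}}\cdot\overrightarrow{h}|$, whose inverse is then moved by the second sector to produce $\gamma_{h}$). So part (2) is incomplete at the step the paper actually proves.

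The genuine gap, however, is in part \ref{item1:mutation in gentle algebra}. Your primary argument --- silting by Theorem \ref{thm:AI}, summands supported on the admissible dissection $\Delta'$, which ``corresponds to a gentle algebra'' and hence forces $\Hom[\per{\Lambda}]{\mu^{+}(\Lambda;e_{H\backslash H'}\Lambda)}{\mu^{+}(\Lambda;e_{H\backslash H'}\Lambda)[<0]}=0$ --- does not work. The unoriented arcs of $\Delta'$ determine the summands only up to shift: the cones $Ce_{[h]}$ are specific \emph{graded} objects, and the degrees of the morphisms attached to intersections of arcs depend on these gradings, which the ungraded dissection (and the abstract existence of a gentle algebra attached to it via Theorem \ref{thm:Schroll}) cannot detect; replacing a summand by a shift leaves $\Delta'$ unchanged while changing $\Hom(M,M[i])$ for $i\neq 0$. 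Indeed, the hypothesis $d(\alpha(h,H'))=0$ is precisely what controls these gradings --- without it the left mutation is still silting by Theorem \ref{thm:AI} but need not be tilting --- and your argument never uses it beyond invoking Proposition \ref{prop:OPS}. Your fallback (``verify directly that $\Hom[\per{\Lambda}]{e_{H\backslash H'}\Lambda}{f}$ is injective'') is the paper's actual proof, but it is asserted rather than performed, and it is the technical heart of the theorem: the paper fixes $h\in H'$ and $h_{0}\in H\backslash H'$, distinguishes eight local configurations on the surface (according to whether $h$ and $\iota h$ share a $\sigma$-orbit, whether the arcs are loops, etc.), expands an arbitrary $f:e_{[h_{0}]}\Lambda\to e_{[h]}\Lambda$ in the basis of the morphism space given by oriented intersections ($f_{\beta}$, $f_{h}f_{\beta}$, possibly $f_{\beta}f_{h_{0}}$, $f_{h}f_{\beta}f_{h_{0}}$), and kills each coefficient using the gentle relations $\alpha\gamma=0$ and $\alpha'\beta=0$ together with the linear independence of the composites with the approximation maps. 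In particular the mechanism is not ``the admissibility of $\Delta'$'' as you claim, and your closing assessment that the tilting property is the soft part is exactly backwards relative to where the paper's work lies.
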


\medskip

\begin{rem}    
    If $H'=\{h,\iota h\}$ for some $h\in H$ and $\Gamma$ has at least 2 edges, notice that we can prove this result thanks to Corollary 3.7 in \cite{CS}. However, we have seen in Example \ref{ex:succession of standard Kauer moves 2} that generalized Kauer moves cannot be described in general as a succession of standard Kauer moves. Thus, we cannot iterate the result from \cite{CS} to prove Theorem \ref{thm:mutation in gentle algebra}.
\end{rem}

\medskip

\begin{proof}
\ref{item1:mutation in gentle algebra} We will use Theorem \ref{thm:AI} to prove that $\mu^{+}(\Lambda;e_{H\backslash H'}\Lambda)$ is tilting. Let $h\in H'$. By Proposition \ref{prop:OPS}, recall that the left $\mathrm{add}(e_{H\backslash H'}\Lambda)$-approximation of $e_{[h]}\Lambda$ in $\per{\Lambda}$ is 

\smallskip

\begin{center}
        \begin{tikzcd}[column sep=2cm]
            e_{[h]}\Lambda \arrow{r}{\begin{pmatrix} \alpha(h,H')_{0} \\ \alpha(\iota h,H')_{0}\end{pmatrix}} &e_{[\sigma^{r(h)+1}h]}\Lambda\oplus e_{[\sigma^{r(\iota h)+1}\iota h]}\Lambda
        \end{tikzcd}
    \end{center}

\smallskip

\noindent Let $h_{0}\in H\backslash H'$ such that $\Hom[\per{\Lambda}]{e_{[h_{0}]}\Lambda}{e_{[h]}\Lambda}\neq0$ and $f:e_{[h_{0}]}\Lambda\rightarrow e_{[h]}\Lambda$ a morphism in $\per{\Lambda}$ such that the following diagram commutes 

\smallskip

\begin{center}
    \begin{tikzcd}[column sep=2cm, row sep=1.5cm]
            e_{[h]}\Lambda \arrow{r}{\begin{pmatrix} \alpha(h,H')_{0} \\ \alpha(\iota h,H')_{0}\end{pmatrix}} &e_{[\sigma^{r(h)+1}h]}\Lambda\oplus e_{[\sigma^{r(\iota h)+1}\iota h]}\Lambda \\
            e_{[h_{0}]}\Lambda \arrow{u}[left]{f} \arrow{ur}[below]{0}            
    \end{tikzcd}
\end{center}

\smallskip

\noindent We want to prove that $f=0$. Let $(S,M,\Delta)$ be the marked surface with an admissible dissection associated to $\Lambda$. Since $\Hom[\per{\Lambda}]{e_{[h_{0}]}\Lambda}{e_{[h]}\Lambda}\neq0$, the \textcolor{green}{$\circ$}-arcs $|\overrightarrow{h}|$ and $|\overrightarrow{h_{0}}|$ have an oriented intersection, which is necessarily at endpoints since $\Delta$ is an admissible dissection. We may assume for instance that $h_{0}$ is in the $\sigma$-orbit of $h$ in $\Gamma$. In particular, since $d(\alpha(h,H'))=0$, $e_{[\sigma^{r(h)+1}h]}\Lambda\neq 0$. Denoting respectively $\alpha$ and $\alpha'$ the paths in the Brauer graph algebra $B$ of $\Gamma$ associated to the morphisms $\alpha(h,H')_{0}$ and $\alpha(\iota h,H')_{0}$, the admissible dissection defining $\Lambda$ is locally given by one of the following.

\smallskip

\begin{itemize}[label=\textbullet, font=\tiny]
    \item \textit{1rst case} : $h$ and $\iota h$ are not in the same $\sigma$-orbit.
\end{itemize}

\smallskip

\begin{figure}[H]
    \centering
        \begin{tikzpicture}[scale=0.8]
        \tikzstyle{vertex}=[draw, circle, minimum size=0.1cm]
        \begin{scope}
        \node[vertex] (1) at (-2,0) {};
        \node[vertex] (2) at (2,0) {};
        \draw (1)--(2) node[near start, above right, scale=0.8]{$h$} node[near end, above, scale=0.8]{$\iota h$};
        \draw (1)--(-1,1.7) node[above, scale=0.8]{$\sigma^{r(h)+1}h$};
        \draw[thick, Aquamarine] (1)--(-1,-1.7) node[below, scale=0.8]{$h_{0}$};
        \draw[thick, Aquamarine, <-] (-1.2,-0.1) arc(0:-50:0.8) node[midway, below right, scale=0.8]{$\beta$} ;
        \draw[->] (-1.2,0.1) arc(0:50:0.8) node[midway, above right, scale=0.8]{$\alpha$};
        \draw (0,-3) node{(\Romannum{1})};
        \node[Goldenrod, draw, circle, fill=Goldenrod, scale=0.4] at (-2.5,0) {};
        \node[Goldenrod, draw, circle, fill=Goldenrod, scale=0.4] at (2.5,0) {};
        \end{scope}

        \begin{scope}[xshift=6cm]
        \node[vertex] (1) at (-2,0) {};
        \node[vertex] (2) at (2,0) {};
        \draw (1)--(2) node[near start, above right, scale=0.8]{$h$} node[near end, above, scale=0.8]{$\iota h$};
        \draw (1)--(-1,1.7) node[above, scale=0.8]{$\sigma^{r(h)+1}h$};
        \draw (2)--(1,-1.7) node[below, scale=0.8]{$\sigma^{r(\iota h)+1}\iota h$};
        \draw[thick, Aquamarine] (1)--(-1,-1.7) node[below, scale=0.8]{$h_{0}$};
        \draw[thick, Aquamarine, <-] (-1.2,-0.1) arc(0:-50:0.8) node[midway, below right, scale=0.8]{$\beta$} ;
        \draw[->] (-1.2,0.1) arc(0:50:0.8) node[midway, above right, scale=0.8]{$\alpha$};
        \draw[->] (1.2,-0.1) arc(180:230:0.8) node[midway,below left, scale=0.8]{$\alpha'$};
        \draw (0,-3) node{(\Romannum{2})};
        \node[Goldenrod, draw, circle, fill=Goldenrod, scale=0.4] at (-2.5,0) {};
        \node[Goldenrod, draw, circle, fill=Goldenrod, scale=0.4] at (2.5,0) {};
        \end{scope}

        \begin{scope}[xshift=12cm]
        \node[vertex] (1) at (-2,0) {};
        \node[vertex] (2) at (2,0) {};
        \draw (1)--(2) node[near start, above right, scale=0.8]{$h$} node[near end, above left, scale=0.8]{$\iota h$};
        \draw (1)--(-1,1.7) node[above, scale=0.8]{$\sigma^{r(h)+1}h$};
        \draw (2)--(1,-1.7) node[below, scale=0.8]{$\sigma^{r(\iota h)+1}\iota h$};
        \draw[thick, Aquamarine] (1)--(-1,-1.7) node[below, scale=0.8]{$h_{0}$};
        \draw[thick, Aquamarine, <-] (-1.2,-0.1) arc(0:-50:0.8) node[midway, below right, scale=0.8]{$\beta$} ;
        \draw[->] (-1.2,0.1) arc(0:50:0.8) node[midway, above right, scale=0.8]{$\alpha$};
        \draw[->] (1.2,-0.1) arc(180:230:0.8) node[midway,below left, scale=0.8]{$\alpha'$};
        \draw[thick, Aquamarine] (2)--(1,1.7) node[above, scale=0.8]{$\iota h_{0}$};
        \draw[thick, Aquamarine, <-]  (1.2,0.1) arc(180:130:0.8) node[midway, above left, scale=0.8]{$\beta'$};
        \draw (0,-3) node{(\Romannum{3})};
        \node[Goldenrod, draw, circle, fill=Goldenrod, scale=0.4] at (-2.5,0) {};
        \node[Goldenrod, draw, circle, fill=Goldenrod, scale=0.4] at (2.5,0) {};
        \end{scope}
        \end{tikzpicture}
        \end{figure}

\smallskip

\begin{itemize}[label=\textbullet, font=\tiny]
    \item \textit{2nd case} : $h$ and $\iota h$ are in the same $\sigma$-orbit. In this case, the edge associated to $h$ is a loop in $\Gamma$. Moreover, we denote $\gamma$ the path in $B$ that induces a morphism $f_{h}:e_{[h]}\Lambda\rightarrow e_{[h]}\Lambda$ in $\per{\Lambda}$ such that the endomorphism algebra of $e_{[h]}\Lambda$ is generated by $\id{e_{[h]}\Lambda}$ and $f_{h}$. 
    \end{itemize}

\smallskip

        \begin{figure}[H]
        \centering
        \begin{tikzpicture}[scale=0.8]
        \tikzstyle{vertex}=[draw, circle, minimum size=0.1cm]
        \begin{scope}[yshift=-8cm]
        \node[vertex] (1) at (0,0) {};
        \draw (1.0) arc (-80:264:1.5) node[near start, right, scale=0.8]{$h$} node[near end, left, scale=0.8]{$\iota h$};
        \draw (1.90)--(0,2) node[above, scale=0.8]{$\sigma^{r(h)+1}h$};
        \draw[thick, Aquamarine] (1)--(1.5,-1.7) node[below, scale=0.8]{$h_{0}$};
        \draw[thick, Aquamarine, <-] (0.8,0.1) arc(5:-45:0.8) node[midway, right, scale=0.8]{$\beta$};
        \draw (1)--(-1.5,-1.7) node[below, scale=0.8]{$\sigma^{r(\iota h)+1}\iota h$};
        \draw[->] (0.75,0.3) arc(22:80:0.8) node[midway, above right, scale=0.8]{$\alpha$};
        \node[Goldenrod, draw, circle, fill=Goldenrod, scale=0.4] at (-0.5,0.5) {};
        \draw[->] (-0.8,0.1) arc(175:225:0.8) node[midway,left, scale=0.8]{$\alpha'$};
        \draw[->, Orange] (-1.3,0.35) arc(165:380:1.35) node[midway, below, scale=0.8]{$\gamma$};
        \draw (0,-3) node{(\Romannum{4})};
        \end{scope}

        \begin{scope}[yshift=-8cm, xshift=6cm]
        \node[vertex] (1) at (0,0) {};
        \draw (1.0) arc (-80:264:1.5) node[near start, right, scale=0.8]{$h$} node[near end, left, scale=0.8]{$\iota h$};
        \draw (1.90)--(0,2) node[above, scale=0.8]{$\sigma^{r(h)+1}h$};
        \draw[thick, Aquamarine] (1)--(1.5,-1.7) node[below, scale=0.8]{$h_{0}$};
        \draw[thick, Aquamarine, <-] (0.8,0.1) arc(5:-45:0.8) node[midway, right, scale=0.8]{$\beta$};
        \draw (1)--(-1.5,-1.7) node[below, scale=0.8]{$\sigma^{r(\iota h)+1}\iota h$};
        \draw[->] (0.75,0.3) arc(22:80:0.8) node[midway, above right, scale=0.8]{$\alpha$};
        \node[Goldenrod, draw, circle, fill=Goldenrod, scale=0.4] at (0,-0.5) {};
        \draw[->] (-0.8,0.1) arc(175:225:0.8) node[midway,left, scale=0.8]{$\alpha'$};
        \draw[<-, Orange] (-1.13,0.75) arc(147:35:1.35) node[near start, above left, scale=0.8]{$\gamma$};
        \draw (0,-3) node{(\Romannum{5})};
        \end{scope}

        \begin{scope}[yshift=-8cm, xshift=12cm]
        \node[vertex] (1) at (0,0) {};
        \draw (1.0) arc (-80:264:1.5) node[near start, right, scale=0.8]{$\iota h$} node[near end, left, scale=0.8]{$h$};
        \draw (1.90)--(0,2) node[above, scale=0.8]{$\sigma^{r(\iota h)+1}\iota h$};
        \draw[thick, Aquamarine] (1)--(-0.75,2) node[left, scale=0.8]{$h_{0}$};
        \draw[thick, Aquamarine, ->] (-0.4,0.7) arc(120:160:0.8) node[near start,yshift=1mm,left, scale=0.8]{$\beta$};
        \draw (1)--(-1.5,-1.7) node[below, scale=0.8]{$\sigma^{r(h)+1}h$};
        \draw[->] (-0.8,0.1) arc(175:225:0.8) node[midway,left, scale=0.8]{$\alpha$};
        \node[Goldenrod, draw, circle, fill=Goldenrod, scale=0.4] at (0.5,-0.5) {};
        \draw[->] (0.75,0.3) arc(22:80:0.8) node[near end,yshift=1mm, right,scale=0.8]{$\alpha'$};
        \draw[Orange, <-] (-1.13,0.75) arc(147:35:1.35) node[near end, above right, scale=0.8]{$\gamma$};
        \draw (0,-3) node{(\Romannum{6})};
        \end{scope}
        \end{tikzpicture}
        \end{figure}

        \smallskip

        \begin{figure}[H]
        \centering
        \begin{tikzpicture}[scale=0.8]
        \tikzstyle{vertex}=[draw, circle, minimum size=0.2cm]
        \begin{scope}[xshift=3cm, yshift=-16cm]
        \node[vertex] (1) at (0,0) {};
        \draw (1.0) arc (-80:264:1.5) node[near start, right, scale=0.8]{$\iota h$} node[near end, left, scale=0.8]{$h$};
        \draw (1.90)--(0,2) node[above, scale=0.8]{$\sigma^{r(\iota h)+1}\iota h$};
        \draw[thick, Aquamarine] (1)--(-0.75,2) node[left, scale=0.8]{$h_{0}$};
        \draw[thick, Aquamarine, ->] (-0.4,0.7) arc(120:160:0.8) node[near start,yshift=1mm,left, scale=0.8]{$\beta$};
        \draw (1)--(-1.5,-1.7) node[below, scale=0.8]{$\sigma^{r(h)+1}h$};
        \draw[->] (0.75,0.3) arc(22:80:0.8) node[near end,yshift=1mm, right,scale=0.8]{$\alpha'$};
        \node[Goldenrod, draw, circle, fill=Goldenrod, scale=0.4] at (-0.27,1.5) {};
        \draw[->] (-0.8,0.1) arc(175:225:0.8) node[midway,left, scale=0.8]{$\alpha$};
        \draw[->, Orange] (-1.3,0.35) arc(165:380:1.35) node[midway, below, scale=0.8]{$\gamma$};
        \draw (0,-3) node{(\Romannum{7})};
        \end{scope}

         \begin{scope}[yshift=-16cm, xshift=9cm]
        \node[vertex] (1) at (0,0) {};
        \draw (1.0) arc (-80:264:1.5) node[near start, right, scale=0.8]{$h$} node[near end, left, scale=0.8]{$\iota h$};
        \draw[thick, Aquamarine] (1)--(1.5,-1.7) node[below, scale=0.8]{$h_{0}$};
        \draw[thick, Aquamarine, <-] (0.8,0.1) arc(5:-45:0.8) node[midway, right, scale=0.8]{$\beta$};
        \draw (1)--(-1.5,-1.7) node[below, scale=0.8]{$\sigma^{r(h)+1}\iota h$};
        \draw[->] (0.75,0.3) arc(22:220:0.8) node[midway, above, scale=0.8]{$\alpha$};
        \node[Goldenrod, draw, circle, fill=Goldenrod, scale=0.4] at (0,-0.5) {};
        \draw[->] (-1.3,0.35) arc(165:220:1.35) node[midway, left, scale=0.8]{$\alpha'$} ;
        \draw[Orange, <-] (-1.13,0.75) arc(147:35:1.35) node[midway, above, scale=0.8]{$\gamma$};
        \draw (0,-3) node{(\Romannum{8})};
        \end{scope}
        \end{tikzpicture}
\end{figure}

\smallskip

\noindent 
In the previous figures, $\beta$ denotes the path in $B$ associated to $f_{\beta}:e_{[h_{0}]}\Lambda\rightarrow e_{[h]}\Lambda$ a non zero morphism in $\per{\Lambda}$ coming from an oriented intersection of the $\textcolor{green}{\circ}$-arcs $|\overrightarrow{h}|$ and $|\overrightarrow{h_{0}}|$. Note that the $\textcolor{green}{\circ}$-arc $|\overrightarrow{h_{0}}|$ could be a loop, except for case (\Romannum{3}), and would induce in this case a morphism $f_{h_{0}}:e_{[h_{0}]}\Lambda\rightarrow e_{[h_{0}]}\Lambda$ in $\per{\Lambda}$ independent from $\id{e_{[h_{0}]}\Lambda}$. We will only prove that $f=0$ in case (\Romannum{5}), other cases being similar. In this case, the admissible dissection defining $\Lambda$ is locally given by one of the following

\smallskip

\begin{figure}[H]
    \centering
            \begin{tikzpicture}[scale=0.8]
        \tikzstyle{vertex}=[draw, circle, minimum size=0.1cm]
        \begin{scope}
        \node[vertex] (1) at (0,0) {};
        \draw (1.0) arc (-80:264:1.5) node[near start, right, scale=0.8]{$h$} node[near end, left, scale=0.8]{$\iota h$};
        \draw (1.90)--(0,2) node[above, scale=0.8]{$\sigma^{r(h)+1}h$};
        \draw[thick, Aquamarine] (1)--(1.5,-1.7) node[below, scale=0.8]{$h_{0}$};
        \draw (1)--(-1.5,-1.7) node[below, scale=0.8]{$\sigma^{r(\iota h)+1}\iota h$};
        \node[Goldenrod, draw, circle, fill=Goldenrod, scale=0.4] at (0,-0.5) {};
        \draw (0,-3) node{(\romannum{1})};
        \draw[thick, Aquamarine, <-] (0.8,0.1) arc(5:-45:0.8) node[midway, right, scale=0.8]{$\beta$};
        \draw[->] (0.75,0.3) arc(22:80:0.8) node[midway, above right, scale=0.8]{$\alpha$};
        \draw[->] (-0.8,0.1) arc(175:225:0.8) node[midway,left, scale=0.8]{$\alpha'$};
        \draw[<-, Orange] (-1.13,0.75) arc(147:35:1.35) node[near start, above left, scale=0.8]{$\gamma$};
        \end{scope}

        \begin{scope}[xshift=8cm]
        \node[vertex] (1) at (0,0) {};
        \draw (1.0) arc (-80:264:1.5) node[near start, right, scale=0.8]{$h$} node[near end, left, scale=0.8]{$\iota h$};
        \draw (1.90)--(0,2) node[above, scale=0.8]{$\sigma^{r(h)+1}h$};
        \draw[thick, Aquamarine, rounded corners] (1.-110) to[bend right=15] (0.5,-2) node[left, scale=0.8]{$\iota h_{0}$} to[bend right=45]  (1.5,-1.5) node[right, scale=0.8]{$h_{0}$} to[bend right=15] (1.-20);
        \draw[thick, Aquamarine, <-] (0.8,0.1) arc(5:-25:0.8) node[midway, right, scale=0.8]{$\beta$};
        \draw (1)--(-1.5,-1.7) node[below, scale=0.8]{$\sigma^{r(\iota h)+1}\iota h$};
        \draw[->] (0.75,0.3) arc(22:80:0.8) node[midway, above right, scale=0.8]{$\alpha$};
        \node[Goldenrod, draw, circle, fill=Goldenrod, scale=0.4] at (-0.3,-0.75) {};
        \draw[->] (-0.8,0.1) arc(175:225:0.8) node[midway,left, scale=0.8]{$\alpha'$};
        \draw[<-, Orange] (-1.13,0.75) arc(147:35:1.35) node[near start, above left, scale=0.8]{$\gamma$};
        \draw[->, thick, Aquamarine] (0.05,-0.8) arc(-85:-35:0.8) node[midway, below, scale=0.8]{$\gamma_{0}$};
        \draw (0,-3) node{(\romannum{2})};
        \end{scope}

        \end{tikzpicture}
\end{figure}

\smallskip

\noindent where $\gamma_{0}$ is the path in $B$ associated to $f_{h_{0}}:e_{[h_{0}]}\Lambda\rightarrow e_{[h_{0}]}\Lambda$. Using the correspondence between oriented intersections and bases of morphism spaces from \cite{OPS}, we deduce that

\begin{equation*}
    \Hom[\per{\Lambda}]{e_{[h_{0}]}\Lambda}{e_{[h]}\Lambda}=\left\{
    \begin{aligned}
        &\mathrm{Span}_{k}(f_{\beta},f_{h}f_{\beta}) &&\mbox{in case (i)} \\
        &\mathrm{Span}_{k}(f_{\beta},f_{h}f_{\beta}, f_{\beta}f_{h_{0}}, f_{h}f_{\beta}f_{h_{0}}) &&\mbox{in case (ii)}
    \end{aligned}\right.
\end{equation*}

\noindent Then, we can write

\begin{equation*}
    f=\left\{\begin{aligned}
    &\lambda f_{\beta}+\mu f_{h}f_{\beta} &&\mbox{in case (i)} \\
    &\lambda f_{\beta}+\mu f_{h}f_{\beta}+\lambda' f_{\beta} f_{h_{0}}+\mu' f_{h}f_{\beta}f_{h_{0}} &&\mbox{in case (ii)}
    \end{aligned}\right.
\end{equation*}

\noindent for some $\lambda,\lambda',\mu,\mu'\in k$. Moreover, since $\alpha\gamma=0$ and $\alpha'\beta=0$, we have 

\begin{equation*}
    \begin{aligned}
        &0=\alpha(h,H')_{0}f=\left\{\begin{aligned}&\lambda\alpha(h,H')_{0}f_{\beta} &&\mbox{in case (i)} \\
        &\lambda\alpha(h,H')_{0}f_{\beta}+\lambda'\alpha(h,H')_{0}f_{\beta}f_{h_{0}} &&\mbox{in case (ii)}
        \end{aligned}\right. \\
        &0=\alpha(\iota h,H')_{0}f=\left\{\begin{aligned}&\mu\alpha(\iota h,H')_{0}f_{h}f_{\beta} &&\mbox{in case (i)} \\
        &\mu\alpha(\iota h,H')_{0}f_{h}f_{\beta}+\mu'\alpha(\iota h,H')_{0}f_{h}f_{\beta}f_{h_{0}} &&\mbox{in case (ii)}
        \end{aligned}\right.        
    \end{aligned}
\end{equation*}

\noindent Again, using the correspondence from \cite{OPS}, we know that 

\smallskip

\begin{itemize}[label=\textbullet, font=\tiny]
\item In case (i), $\alpha(h,H')_{0}f_{\beta},\alpha(\iota h,H')_{0}f_{h}f_{\beta}\neq 0$.
\item In case (ii), the morphisms $\alpha(h,H')_{0}f_{\beta}$ and $\alpha(h,H')_{0}f_{\beta}f_{h_{0}}$ are independent and it is also the case for $\alpha(\iota h,H')_{0}f_{h}f_{\beta}$ and $\alpha(\iota h, H')_{0}f_{h}f_{\beta}f_{h_{0}}$.
\end{itemize}

\smallskip

\noindent In both cases, we conclude that $f=0$.

\ref{item2:mutation in gentle algebra} The first part is clear by Proposition \ref{prop:admissible cut}. Moreover, by definition

\[\mu^{+}(\Lambda;e_{H\backslash H'}\Lambda)=e_{H\backslash H'}\Lambda \ \bigoplus \underset{[h]\in H'/\iota}{\bigoplus}Ce_{[h]}\]

\noindent where $Ce_{[h]}$ is the cone of the left $\mathrm{add}(e_{H\backslash H'}\Lambda)$-approximation of $e_{[h]}\Lambda$ in $\per{\Lambda}$. Moreover, by Proposition \ref{prop:OPS} and \cite{APS}, its endomorphism algebra is the gentle algebra corresponding to the marked surface $(S,M)$ equipped with the following admissible dissection

\begin{equation*}
        \Delta'=\{|\overrightarrow{\iota \sigma^{r(h)+1}h}\ \cdot \ \overrightarrow{h} \ \cdot \ \overrightarrow{\sigma^{r(\iota h)+1}\iota h}|\, ,\,  [h]\in H'/\iota\} \ \cup \ \{|\overrightarrow{h'}|\, ,\,  [h']\in (H\backslash H')/\iota\}
\end{equation*}

\noindent It remains to prove that $(S,M,\Delta')$ is the marked surface equipped with an admissible dissection corresponding to $\mu^{+}_{\pi}(\Gamma,d)$. Note first that using Proposition \ref{prop:commutativity of graded generalized Kauer moves}, the successive $\Z$-graded generalized Kauer moves defining $\mu^{+}_{\pi}(\Gamma,d)$ may be done in any order. Iterating Lemma \ref{lem:move of admissible dissection}, we know that $(S,M)$ is the marked surface associated to $\mu^{+}_{\pi}(\Gamma,d)$. Moreover, it suffices to prove that for any $h\in H'$, 

\[\gamma_{h}:=|\overrightarrow{\iota \sigma^{r(h)+1}h}\ \cdot \ \overrightarrow{h} \ \cdot \ \overrightarrow{\sigma^{r(\iota h)+1}\iota h}|\]

\noindent is the \textcolor{green}{$\circ$}-arc in $S$ corresponding to the edge $[h]$ in $\mu^{+}_{\pi}(\Gamma,d)$ under the embedding of the Brauer graph of $\mu^{+}_{\pi}(\Gamma,d)$ in $(S,M)$. There are three cases to consider according to the existence of $r,r'\ge 0$ such that $\sigma^{r}h\notin H'$ and $\sigma^{r'}\iota h \notin H'$. We will only detail the case where such $r,r'\ge 0$ exist, other cases being similar. In this case, there exist $(h_{1},r_{1}),(h_{2},r_{2})\in\mathrm{Sect}(H',\sigma)$ such that $h=\sigma^{j}h_{1}$ and $\iota h=\sigma^{j'}h_{2}$ for some $j=0,\ldots,r_{1}$ and $j'=0,\ldots,r_{2}$. 

\smallskip

\begin{itemize}[label=\textbullet, font=\tiny]
\item If $(h_{1},r_{1})=(h_{2},r_{2})$, by Lemma \ref{lem:move of admissible dissection}, the edge $[h]$ in $\mu^{+}_{(h_{1},r_{1})}(\Gamma,d)$ embeds in $(S,M)$ into the $\textcolor{green}{\circ}$-arc $|\overrightarrow{\iota\sigma^{r_{1}+1}h_{1}}\ \cdot \ \overrightarrow{h} \ \cdot \ \overrightarrow{\sigma^{r_{1}+1}\iota h_{1}}|$ which is exactly $\gamma_{h}$. Moreover, since $[h]\neq[\sigma^{j}h_{0}]$, $j=0,\ldots, r_{0}$ for all $(h_{0},r_{0})\in\mathrm{Sect}(H',\sigma)$ distinct from $(h_{1},r_{1})$, the half edges $h$ and $\iota h$ do not move from $\mu^{+}_{(h_{1},r_{1})}(\Gamma,d)$ to $\mu^{+}_{\pi}(\Gamma,d)$. Thus, the edge $[h]$ in $\mu^{+}_{\pi}(\Gamma,d)$ embeds in $(S,M)$ into the same $\textcolor{green}{\circ}$-arc as the edge $[h]$ in $\mu^{+}_{(h_{1},r_{1})}(\Gamma,d)$ i.e. into $\gamma_{h}$.

\item If $(h_{1},r_{1})\neq (h_{2},r_{2})$, by Lemma \ref{lem:move of admissible dissection}, the oriented edge $(\iota h)^{-1}h$ in $\mu^{+}_{(h_{1},r_{1})}(\Gamma,d)$ embeds in $(S,M)$ into the oriented \textcolor{green}{$\circ$}-arc $(\overrightarrow{\iota\sigma^{r_{1}+1}h_{1}} \ \cdot \ \overrightarrow{h})^{-1}$ i.e. $\overrightarrow{\iota h} \ \cdot \ \overrightarrow{\sigma^{r_{1}+1}h_{1}}$. Thus, using again Lemma \ref{lem:move of admissible dissection}, the edge $[\iota h]$ in $\mu^{+}_{(h_{2},r_{2})(h_{1},r_{1})}(\Gamma,d)$ embeds in $(S,M)$ into the $\textcolor{green}{\circ}$-arc $|\overrightarrow{\iota\sigma^{r_{2}+1}h_{2}}\ \cdot \ \overrightarrow{\iota h}\ \cdot \ \overrightarrow{\sigma^{r_{1}+1}h_{1}}|$ which happens to be $\gamma_{h}$. We conclude as in the previous case.
\end{itemize}

\end{proof}

\medskip

\subsection{Compatibility for Brauer graph algebras}

\medskip

The goal of this part is to prove the following theorem that interprets the generalized Kauer moves in terms of silting mutations.

\medskip

\begin{thm}\label{thm:generalized Kauer move}
    Let $\Gamma=(H,\iota,\sigma)$ be a Brauer graph and $H'$ be a subset of $H$ stable under $\iota$. Denoting $B_{\Gamma}$ and $B_{\mu^{+}_{H'}(\Gamma)}$ the Brauer graph algebras associated to $\Gamma$ and $\mu^{+}_{H'}(\Gamma)$ respectively, 

    \[\mathrm{End}_{\per{B_{\Gamma}}}(\mu^{+}(B_{\Gamma};e_{H\backslash H'}B_{\Gamma}))\simeq B_{\mu^{+}_{H'}(\Gamma)}\]
\end{thm}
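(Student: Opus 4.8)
The plan is to descend to the gentle algebra attached to a well-chosen admissible cut of $\Gamma$, to apply Theorem \ref{thm:mutation in gentle algebra} there, and then to lift the resulting identification of endomorphism algebras back to the Brauer graph algebras through the trivial extension. First I would produce a suitable admissible cut. Recall that for $h\in H'$ the degree of $\alpha(h,H')$ equals $\sum_{i=0}^{r(h)}d(\sigma^{i}h)$, so the hypothesis of Theorem \ref{thm:mutation in gentle algebra} amounts to asking that $d$ vanish on every half-edge of $H'$ belonging to a sector. Such a $d$ exists: a half-edge $h\in H'$ belongs to a sector precisely when its $\sigma$-orbit leaves $H'$, that is, when the vertex $s(h)$ carries a half-edge outside $H'$; choosing at each such vertex its distinguished degree-$1$ half-edge outside $H'$ (and an arbitrary one at the remaining vertices) yields an admissible cut $d$ with $d(\alpha(h,H'))=0$ for all $h\in H'$. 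Write $\Lambda=(B_{\Gamma})_{d}$ for the associated gentle algebra; by Theorem \ref{thm:Schroll} we have $B_{\Gamma}=\mathrm{Triv}(\Lambda)$, and $\Lambda$ is the degree-$0$ subalgebra of $B_{\Gamma}$.

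By Theorem \ref{thm:mutation in gentle algebra}, the left mutation $T:=\mu^{+}(\Lambda;e_{H\backslash H'}\Lambda)$ is a tilting object of $\per{\Lambda}$ whose endomorphism algebra is the gentle algebra $\Lambda'$ associated with $\mu^{+}_{\pi}(\Gamma,d)$, where $\pi$ is the product of all maximal sectors of $H'$. The underlying ungraded Brauer graph of $\mu^{+}_{\pi}(\Gamma,d)$ is exactly $\mu^{+}_{H'}(\Gamma)$, and its grading is an admissible cut by iterating Proposition \ref{prop:admissible cut}; hence Theorem \ref{thm:Schroll} gives $\mathrm{Triv}(\Lambda')=B_{\mu^{+}_{H'}(\Gamma)}$. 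It therefore suffices to identify the silting mutation over $B_{\Gamma}$ with the image of $T$ under the trivial extension.

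Next I would compare the two approximations. Because $d(\alpha(h,H'))=0$ for $h\in H'$, each path $\alpha(h,H')$ has degree $0$ and so lies in $\Lambda\subseteq B_{\Gamma}$, and $e_{[h]}\Lambda\otimes_{\Lambda}B_{\Gamma}\cong e_{[h]}B_{\Gamma}$. The left $\mathrm{add}(e_{H\backslash H'}B_{\Gamma})$-approximation of $e_{[h]}B_{\Gamma}$ given by $\big(\begin{smallmatrix}\alpha(h,H')\\ \alpha(\iota h,H')\end{smallmatrix}\big)$ is thus obtained from the left $\mathrm{add}(e_{H\backslash H'}\Lambda)$-approximation of Proposition \ref{prop:OPS} by applying the induction functor $-\otimes_{\Lambda}B_{\Gamma}\colon\per{\Lambda}\rightarrow\per{B_{\Gamma}}$. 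This functor is triangulated and sends finitely generated projectives to finitely generated projectives, so it carries cones to cones; consequently $\mu^{+}(B_{\Gamma};e_{H\backslash H'}B_{\Gamma})\cong T\otimes_{\Lambda}B_{\Gamma}$ in $\per{B_{\Gamma}}$.

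Finally I would invoke the compatibility of the trivial extension with tilting. Viewing $T$ as a two-sided tilting complex of $\Lambda'$-$\Lambda$-bimodules realizing the derived equivalence of Theorem \ref{thm:mutation in gentle algebra}, Rickard's construction \cite{Rickard} shows that $-\otimes_{\Lambda}\mathrm{Triv}(\Lambda)$ sends $T$ to a tilting complex over $\mathrm{Triv}(\Lambda)=B_{\Gamma}$ whose endomorphism algebra is $\mathrm{Triv}(\mathrm{End}_{\per{\Lambda}}(T))=\mathrm{Triv}(\Lambda')$. Combining this with the identification of the previous paragraph yields
\[\mathrm{End}_{\per{B_{\Gamma}}}\!\big(\mu^{+}(B_{\Gamma};e_{H\backslash H'}B_{\Gamma})\big)\cong\mathrm{Triv}(\Lambda')=B_{\mu^{+}_{H'}(\Gamma)}.\]
The delicate point is precisely this last step: one must check that inducing along $\Lambda\hookrightarrow\mathrm{Triv}(\Lambda)$ actually computes the trivial extension of the endomorphism algebra, and not merely that $B_{\Gamma}$ and $B_{\mu^{+}_{H'}(\Gamma)}$ are derived equivalent (which is already Proposition \ref{prop:derived equivalent Brauer graph algebras}). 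The remaining steps are either combinatorial bookkeeping (existence of the cut, the underlying graph and grading of $\mu^{+}_{\pi}$) or formal consequences of the triangulated structure of $-\otimes_{\Lambda}B_{\Gamma}$.
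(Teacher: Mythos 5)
Your proposal is correct, and its skeleton coincides with the paper's proof: construct an admissible cut $d$ with $d(\alpha(h,H'))=0$ for all $h\in H'$ (your "any degree-$1$ half-edge outside $H'$ at each mixed vertex" is a mild generalization of the paper's specific choice $d(\sigma^{-1}h)=1$ for a maximal sector $(h,r)$, and both satisfy the hypothesis of Theorem \ref{thm:mutation in gentle algebra}), apply that theorem to get the tilting object $T=\mu^{+}(\Lambda;e_{H\backslash H'}\Lambda)$ with endomorphism algebra the gentle algebra of $\mu^{+}_{\pi}(\Gamma,d)$, induce along $\Lambda\hookrightarrow\mathrm{Triv}(\Lambda)=B_{\Gamma}$, invoke Rickard's Theorem \ref{thm:Rickard}, and identify the underlying graph via Theorem \ref{thm:Schroll}. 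The one genuinely different ingredient is how you establish $\mu^{+}(B_{\Gamma};e_{H\backslash H'}B_{\Gamma})\cong T\lotimes{\Lambda}{B_{\Gamma}}$. The paper proves the abstract Lemma \ref{lem:tilting mutations}, valid for an arbitrary tilting object $T$ and summand $T_{0}$, whose proof requires showing the induced morphism is still a left approximation via the vanishing of $\Hom[\per{\Lambda}]{C_{T_{0}}}{(T/T_{0})[1]}\oplus D\Hom[\per{\Lambda}]{T/T_{0}}{C_{T_{0}}[-1]}$, using the decomposition of $\mathrm{Hom}$-spaces over the trivial extension and the tilting property of the mutation. You instead observe that the explicit left $\mathrm{add}(e_{H\backslash H'}B_{\Gamma})$-approximation of $e_{[h]}B_{\Gamma}$ (the unnumbered proposition in Section 3.2) is literally the image under the triangulated induction functor of the approximation of Proposition \ref{prop:OPS}, since the degree-$0$ paths $\alpha(h,H')$ live in $\Lambda$; cones then match for free. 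This is more elementary and avoids the $\mathrm{Hom}$-vanishing computation entirely, at the cost of being tied to the specific combinatorial approximations (it only treats mutations of the free module, whereas the paper's lemma is a reusable general statement) and of leaning on the unproved-in-text approximation proposition over $B_{\Gamma}$. One small point you compress into "combinatorial bookkeeping": the equality of the orientation $\sigma_{\pi}$ of $\mu^{+}_{\pi}(\Gamma,d)$ with the orientation $\sigma_{H'}$ of $\mu^{+}_{H'}(\Gamma)$ is not automatic, since the single-sector moves must be composable in some (hence, by Proposition \ref{prop:commutativity of graded generalized Kauer moves}, any) order; the paper cites that proposition explicitly here, and your write-up should too.
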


\medskip

The idea of the proof is to use a result proved by Rickard in \cite{Rickard} that gives a link between derived equivalences of algebras and derived equivalences of their trivial extensions.

\medskip

\begin{thm}[Rickard {\cite[Theorem 3.1]{Rickard}}]\label{thm:Rickard}
    Let $\Lambda$ be a finite dimensional $k$-algebra and $T\in\per{\Lambda}$ be a tilting object. We denote by $B_{\Lambda}$ the trivial extension of $\Lambda$. Then, $T\lotimes{\Lambda}{B_{\Lambda}}$ is a tilting object in $\per{B_{\Lambda}}$. Moreover, 

    \[\mathrm{End}_{\per{B_{\Lambda}}}(T\lotimes{\Lambda}{B_{\Lambda}})\simeq \mathrm{Triv}(\mathrm{End}_{\per{\Lambda}}(T)) \]

\end{thm}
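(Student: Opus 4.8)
The plan is to realize the Brauer-graph-algebra mutation $\mu^+(B_\Gamma; e_{H\setminus H'}B_\Gamma)$ as the image, under the derived base-change functor $- \lotimes{\Lambda}{B_\Gamma}$, of the gentle-algebra mutation produced by Theorem \ref{thm:mutation in gentle algebra}, and then to read off its endomorphism algebra using Rickard's Theorem \ref{thm:Rickard} together with the converse part of Schroll's Theorem \ref{thm:Schroll}. So the whole argument rests on a single compatibility statement and two black boxes.

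First I would fix a convenient admissible cut. Choose an admissible cut $d:H\to\Z$ of $\Gamma$ whose unique degree-$1$ half-edge at each vertex lies outside every maximal sector of $H'$. Such a cut exists: a maximal sector $(h_0,r_0)$ always has its successor $\sigma^{r_0+1}h_0$ in the same $\sigma$-orbit but outside $H'$, so the half-edges belonging to maximal sectors never exhaust an orbit, and at each vertex there remains a half-edge on which to place the degree $1$. With this choice $d(\sigma^i h)=0$ for every half-edge in a maximal sector, hence $d(\alpha(h,H'))=0$ for all $h\in H'$, and the hypotheses of Theorem \ref{thm:mutation in gentle algebra} are met. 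Set $\Lambda=(B_\Gamma)_d$; by Theorem \ref{thm:Schroll} this is a gentle algebra with $\mathrm{Triv}(\Lambda)=B_\Gamma$, realized as the degree-$0$ subalgebra of $B_\Gamma$.

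The crux is the identification
\[
\mu^+(B_\Gamma; e_{H\setminus H'}B_\Gamma)\;\cong\;\mu^+(\Lambda; e_{H\setminus H'}\Lambda)\lotimes{\Lambda}{B_\Gamma}.
\]
Since $- \lotimes{\Lambda}{B_\Gamma}$ is triangulated and sends $e_{[h]}\Lambda$ to $e_{[h]}B_\Gamma$, I would apply it to the approximation triangle of $e_{[h]}\Lambda$ from Proposition \ref{prop:OPS}. Because $d(\alpha(h,H'))=d(\alpha(\iota h,H'))=0$, the degree-$0$ maps $\alpha(h,H')_0$ and $\alpha(\iota h,H')_0$ are the restrictions of $\alpha(h,H')$ and $\alpha(\iota h,H')$, so their base-change recovers precisely the map $e_{[h]}B_\Gamma\to e_{[\sigma^{r(h)+1}h]}B_\Gamma\oplus e_{[\sigma^{r(\iota h)+1}\iota h]}B_\Gamma$ which the unlabelled Proposition preceding Proposition \ref{prop:OPS} identifies as the left $\mathrm{add}(e_{H\setminus H'}B_\Gamma)$-approximation of $e_{[h]}B_\Gamma$. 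Hence the base-change of each cone is the corresponding cone over $B_\Gamma$, and summing over $[h]\in H'/\iota$ together with $e_{H\setminus H'}\Lambda\lotimes{\Lambda}{B_\Gamma}=e_{H\setminus H'}B_\Gamma$ yields the displayed isomorphism.

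With this in hand the conclusion is formal. Writing $T=\mu^+(\Lambda; e_{H\setminus H'}\Lambda)$, Theorem \ref{thm:mutation in gentle algebra}\ref{item1:mutation in gentle algebra} gives that $T$ is tilting in $\per{\Lambda}$ and part \ref{item2:mutation in gentle algebra} identifies $\mathrm{End}_{\per{\Lambda}}(T)$ with the gentle algebra of the cut Brauer graph $\mu^+_{\pi}(\Gamma,d)=(\mu^+_{H'}(\Gamma),d')$. Rickard's Theorem \ref{thm:Rickard} then gives
\[
\mathrm{End}_{\per{B_\Gamma}}\!\big(T\lotimes{\Lambda}{B_\Gamma}\big)\;\simeq\;\mathrm{Triv}\!\big(\mathrm{End}_{\per{\Lambda}}(T)\big),
\]
and by the converse part of Theorem \ref{thm:Schroll} the right-hand side is the Brauer graph algebra $B_{\mu^+_{H'}(\Gamma)}$, the trivial extension forgetting the cut $d'$. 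Combined with the crux isomorphism this proves $\mathrm{End}_{\per{B_\Gamma}}(\mu^+(B_\Gamma; e_{H\setminus H'}B_\Gamma))\simeq B_{\mu^+_{H'}(\Gamma)}$. The main obstacle is the crux step: one must verify that derived base change carries the mutation triangle of the gentle algebra to that of its trivial extension, for which the degree-$0$ hypothesis on the approximations is exactly what is needed. The only other point requiring care is the degenerate case where an edge of $H'$ has both endpoints filled by $H'$, where the approximation target vanishes; there one checks directly that the resulting shifted projective summand contributes no morphisms in either direction and so leaves the endomorphism algebra unchanged, consistently with the fact that such a full orbit produces no sector and hence is untouched by $\mu^+_{H'}$.
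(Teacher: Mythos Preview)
Your proposal does not address the stated theorem. Theorem~\ref{thm:Rickard} is Rickard's result that for a tilting object $T\in\per{\Lambda}$ the base change $T\lotimes{\Lambda}{B_\Lambda}$ is tilting in $\per{B_\Lambda}$ with endomorphism algebra $\mathrm{Triv}(\mathrm{End}_{\per{\Lambda}}(T))$; the paper quotes this from \cite{Rickard} without proof and uses it as a black box. What you have written is instead a proof of Theorem~\ref{thm:generalized Kauer move}: you explicitly invoke Theorem~\ref{thm:Rickard} as one of your ``two black boxes'', and your final conclusion is $\mathrm{End}_{\per{B_\Gamma}}(\mu^+(B_\Gamma; e_{H\setminus H'}B_\Gamma))\simeq B_{\mu^+_{H'}(\Gamma)}$, which is the statement of Theorem~\ref{thm:generalized Kauer move}, not of Theorem~\ref{thm:Rickard}.

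Read as a proof of Theorem~\ref{thm:generalized Kauer move}, your argument follows the paper's closely: choose an admissible cut avoiding the sectors, pass to the associated gentle algebra $\Lambda$, apply Theorem~\ref{thm:mutation in gentle algebra}, transport along $-\lotimes{\Lambda}{B_\Gamma}$, and conclude via Theorems~\ref{thm:Rickard} and~\ref{thm:Schroll}. The one methodological difference is your ``crux'' step. You establish $\mu^+(B_\Gamma; e_{H\setminus H'}B_\Gamma)\cong\mu^+(\Lambda; e_{H\setminus H'}\Lambda)\lotimes{\Lambda}{B_\Gamma}$ by a direct comparison: base-changing the explicit approximation of Proposition~\ref{prop:OPS} and recognizing the result as the explicit $B_\Gamma$-approximation described just before it. The paper instead isolates this into the abstract Lemma~\ref{lem:tilting mutations}, which proves in general that $\mu^+(T;T/T_0)\lotimes{\Lambda}{B_\Lambda}=\mu^+(T\lotimes{\Lambda}{B_\Lambda};(T/T_0)\lotimes{\Lambda}{B_\Lambda})$ whenever $\mu^+(T;T/T_0)$ is tilting, via a vanishing computation for $\Hom$ in the symmetric algebra $B_\Lambda$. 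Your concrete check works here because both approximations are known explicitly, but the paper's lemma is cleaner, does not depend on those formulas, and is reusable. Your closing remark on the degenerate case (a $\sigma$-orbit entirely contained in $H'$) is loosely phrased and would need tightening; the paper's abstract lemma sidesteps this case-analysis entirely.
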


\medskip

We can summarize the previous theorem with the following commutative diagram

\smallskip

    \begin{center}
        \begin{tikzcd}[column sep=2.8cm, row sep=2.2cm, every label/.append style = {font = \small}]
            \per{\Lambda'} \arrow[cramped]{d}[left]{-\lotimes{\Lambda'}{B_{\Lambda'}}} \arrow[cramped]{r}[above]{-\lotimes{\Lambda'}{T}} &\per{\Lambda} \arrow[cramped]{d}[right]{-\lotimes{\Lambda}{B_{\Lambda}}} \\
            \per{B_{\Lambda'}} 
            \arrow[cramped]{r}[below, yshift=-1mm]{-\lotimes{B_{\Lambda'}}(T\lotimes{\Lambda}B_{\Lambda})} &\per{B_{\Lambda}} 
        \end{tikzcd}
    \end{center}

\smallskip

\noindent where $\Lambda'$ denotes the endomorphism algebra of $T$ and $B_{\Lambda'}$ denotes the trivial extension of $\Lambda'$.

    To prove Theorem \ref{thm:generalized Kauer move}, we will need the following lemma, which links tilting mutations in an algebra and tilting mutations in its trivial extension as in the previous theorem.

    \medskip

    \begin{lem}\label{lem:tilting mutations}
    Let $\Lambda$ be a finite dimensional $k$-algebra and $T\in\per{\Lambda}$ be a tilting object. We denote by $B_{\Lambda}$ the trivial extension of $\Lambda$. Let $T_{0}\in\mathrm{add}(T)$ be such that $\mu^{+}(T;T/T_{0})$ is tilting in $\per{\Lambda}$. Then,

    \[\mu^{+}(T\lotimes{\Lambda}{B_{\Lambda}};(T\lotimes{\Lambda}{B_{\Lambda}})/(T_{0}\lotimes{\Lambda}{B_{\Lambda}}))=\mu^{+}(T;T/T_{0})\lotimes{\Lambda}{B_{\Lambda}}\]
        
    \end{lem}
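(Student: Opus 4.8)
The plan is to show that the exact (triangle) functor $F := (-)\lotimes{\Lambda}{B_{\Lambda}}\colon\per{\Lambda}\to\per{B_{\Lambda}}$ carries the triangle defining $\mu^{+}(T;T/T_{0})$ to a triangle defining the left mutation on the right-hand side. Write $T\cong T_{0}\oplus(T/T_{0})$ and let $f\colon T_{0}\to (T/T_{0})'$ be the left $\mathrm{add}(T/T_{0})$-approximation used to define $\mu^{+}(T;T/T_{0})$, sitting in the triangle $T_{0}\xrightarrow{f}(T/T_{0})'\to C\to T_{0}[1]$, so that $\mu^{+}(T;T/T_{0})=(T/T_{0})\oplus C$. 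Since $F$ is additive and triangulated, applying it yields a triangle $FT_{0}\xrightarrow{Ff}F(T/T_{0})'\to FC\to FT_{0}[1]$ with $F(T/T_{0})'\in\mathrm{add}(F(T/T_{0}))$, and $F(T/T_{0})=(T\lotimes{\Lambda}{B_{\Lambda}})/(T_{0}\lotimes{\Lambda}{B_{\Lambda}})$. Hence the whole statement reduces to the single claim that $Ff$ is a left $\mathrm{add}(F(T/T_{0}))$-approximation of $FT_{0}$: once this is known, $FC$ is (up to the standard well-definedness of left mutation) the cone used to build $\mu^{+}\big(FT;\,FT/FT_{0}\big)$, and $F\big(\mu^{+}(T;T/T_{0})\big)=F(T/T_{0})\oplus FC$ is the desired object.

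To verify the approximation property I would use that $F$ is left adjoint to the restriction functor $\mathrm{res}\colon\per{B_{\Lambda}}\to\per{\Lambda}$. For perfect $X,Y$ this gives a natural isomorphism $\Hom[\per{B_{\Lambda}}]{FX}{FY}\cong\Hom[\per{\Lambda}]{X}{\mathrm{res}\,FY}$, and since $B_{\Lambda}=\Lambda\oplus D\Lambda$ as a $\Lambda$-bimodule we have $\mathrm{res}\,FY\cong Y\oplus (Y\lotimes{\Lambda}{D\Lambda})$. By naturality, precomposition with $Ff$ corresponds to precomposition with $f$ and respects this direct sum decomposition. Therefore $\Hom[\per{B_{\Lambda}}]{Ff}{F(T/T_{0})}$ is surjective if and only if both of the maps induced by $f$, namely $\Hom[\per{\Lambda}]{(T/T_{0})'}{T/T_{0}}\to\Hom[\per{\Lambda}]{T_{0}}{T/T_{0}}$ and $\Hom[\per{\Lambda}]{(T/T_{0})'}{(T/T_{0})\lotimes{\Lambda}{D\Lambda}}\to\Hom[\per{\Lambda}]{T_{0}}{(T/T_{0})\lotimes{\Lambda}{D\Lambda}}$, are surjective.

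The first of these is immediate, as it is exactly the surjectivity expressing that $f$ is a left $\mathrm{add}(T/T_{0})$-approximation. The second is the crux of the argument, and the hard part I expect. Here I would invoke relative Serre duality for perfect complexes: since $T/T_{0}$ is perfect, there is a natural isomorphism $\Hom[\per{\Lambda}]{Z}{(T/T_{0})\lotimes{\Lambda}{D\Lambda}}\cong D\Hom[\per{\Lambda}]{T/T_{0}}{Z}$. Under it, precomposition with $f$ on the left corresponds to the $k$-dual of postcomposition with $f$ on the right, that is to $D\big(\Hom[\per{\Lambda}]{T/T_{0}}{f}\big)$. Thus the second map is surjective if and only if $\Hom[\per{\Lambda}]{T/T_{0}}{f}\colon\Hom[\per{\Lambda}]{T/T_{0}}{T_{0}}\to\Hom[\per{\Lambda}]{T/T_{0}}{(T/T_{0})'}$ is injective. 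But this injectivity is precisely the criterion in Theorem \ref{thm:AI} characterizing when $\mu^{+}(T;T/T_{0})$ is tilting, which holds by hypothesis.

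Combining the two surjectivities shows that $Ff$ is a left $\mathrm{add}(F(T/T_{0}))$-approximation, completing the reduction from the first paragraph and proving the lemma. The only points requiring care are the compatibility of the adjunction with the bimodule splitting $B_{\Lambda}=\Lambda\oplus D\Lambda$ (to identify $\mathrm{res}\,FY$ and to see that the twisted summand is governed by $(-)\lotimes{\Lambda}{D\Lambda}$), and the fact that the left mutation depends only up to isomorphism on the chosen approximation, so that exhibiting $Ff$ as one valid approximation suffices.
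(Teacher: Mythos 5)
Your proposal is correct and follows essentially the same route as the paper: apply the triangulated functor $-\lotimes{\Lambda}{B_{\Lambda}}$ to the approximation triangle, reduce everything to showing that $f\lotimes{\Lambda}{B_{\Lambda}}$ remains a left approximation, and verify this using the bimodule splitting $B_{\Lambda}=\Lambda\oplus D\Lambda$ together with the Serre duality $\Hom[\per{\Lambda}]{Z}{(T/T_{0})\lotimes{\Lambda}{D\Lambda}}\simeq D\Hom[\per{\Lambda}]{T/T_{0}}{Z}$. The only cosmetic difference is the final step: the paper deduces surjectivity from the vanishing of $\Hom[\per{B_{\Lambda}}]{C_{T_{0}}\lotimes{\Lambda}{B_{\Lambda}}[-1]}{(T/T_{0})\lotimes{\Lambda}{B_{\Lambda}}}$, computed via the same decomposition and the fact that $T/T_{0}\oplus C_{T_{0}}$ is tilting, whereas you check surjectivity of the two components directly, citing the approximation property of $f$ and the injectivity criterion of Theorem \ref{thm:AI} (for which your appeal to the chosen $f$, rather than just some approximation, is justified by the standard Krull--Schmidt comparison of approximations).
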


    \medskip

    In particular, $\mu^{+}(T\lotimes{\Lambda}{B_{\Lambda}};(T\lotimes{\Lambda}{B_{\Lambda}})/(T_{0}\lotimes{\Lambda}{B_{\Lambda}}))\in\per{B_{\Lambda}}$ is tilting using Theorem \ref{thm:Rickard}.

    \medskip

    \begin{proof}
        Let us consider the following triangle in $\per{\Lambda}$

        \smallskip

        \begin{center}
            \begin{tikzcd}
                T_{0} \arrow[cramped]{r}[above]{f} &T' \arrow[cramped]{r} &C_{T_{0}} \arrow[cramped]{r}&T_{0}[1]
            \end{tikzcd}
        \end{center}

        \smallskip

        \noindent where $f:T_{0}\rightarrow T'$ is a left $\mathrm{add}(T/T_{0})$-approximation of $T_{0}$ in $\per{\Lambda}$. Since the functor $-\lotimes{\Lambda}{B_{\Lambda}}:\per{\Lambda}\rightarrow\per{B_{\Lambda}}$ is triangulated, we have

        \[(T\lotimes{\Lambda}{B_{\Lambda}})/(T_{0}\lotimes{\Lambda}{B_{\Lambda}})=(T/T_{0})\lotimes{\Lambda}{B_{\Lambda}}\]

        \noindent Moreover, the previous functor yields the following triangle in $\per{B_{\Lambda}}$

        \smallskip
        
        \begin{center}
            \begin{tikzcd}[column sep=1.5cm]
                T_{0}\lotimes{\Lambda}{B_{\Lambda}} \arrow[cramped]{r}[above, yshift=1mm]{f\lotimes{\Lambda}{B_{\Lambda}}} &T'\lotimes{\Lambda}{B_{\Lambda}} \arrow[cramped]{r} &C_{T_{0}}\lotimes{\Lambda}{B_{\Lambda}} \arrow[cramped]{r}&T_{0}\lotimes{\Lambda}{B_{\Lambda}}[1]
            \end{tikzcd}
        \end{center}

        \smallskip

        \noindent We want to prove that $f\lotimes{\Lambda}{B_{\Lambda}}:T_{0}\lotimes{\Lambda}{B_{\Lambda}}\rightarrow T'\lotimes{\Lambda}{B_{\Lambda}}$ is a left $\mathrm{add}((T/T_{0})\lotimes{\Lambda}{B_{\Lambda}})$-approximation of $T_{0}\lotimes{\Lambda}{B_{\Lambda}}$ in $\per{B_{\Lambda}}$. Applying $\Hom[\per{B_{\Lambda}}]{-}{(T/T_{0})\lotimes{\Lambda}{B_{\Lambda}}}$ to the previous triangle, it suffices to prove that 
        
        \[\mathrm{H}:=\Hom[\per{\Lambda}]{C_{T_{0}}\lotimes{\Lambda}{B_{\Lambda}}[-1]}{(T/T_{0})\lotimes{\Lambda}{B_{\Lambda}}}=0\]
        
        For this, notice that $\mathrm{H}$ is naturally isomorphic to the homology in degree 1 of the total complex of the double complex
        
        \begin{equation*}
        \begin{aligned}
        CC:&=\Hom[B_{\Lambda}]{C_{T_{0}}\lotimes{\Lambda}{B_{\Lambda}}}{(T/T_{0})\lotimes{\Lambda}{B_{\Lambda}}} \\
        &=\Hom[\Lambda]{C_{T_{0}}}{(T/T_{0})\lotimes{\Lambda}{B_{\Lambda}}} \\
        &=\Hom[\Lambda]{C_{T_{0}}}{T/T_{0}}\oplus \Hom[\Lambda]{C_{T_{0}}}{(T/T_{0})\lotimes{\Lambda}{D\Lambda}} \\
        &=\Hom[\Lambda]{C_{T_{0}}}{T/T_{0}}\oplus D\Hom[\Lambda]{T/T_{0}}{C_{T_{0}}}
        \end{aligned}
        \end{equation*}

        \noindent Thus, we obtain that

        \begin{equation*}
            \mathrm{H}= \Hom[\per{\Lambda}]{C_{T_{0}}}{(T/T_{0})[1]} \oplus D\Hom[\per{\Lambda}]{T/T_{0}}{C_{T_{0}}[-1]}
        \end{equation*}

        \noindent Since $\mu^{+}(T;T/T_{0})= T/T_{0} \oplus C_{T_{0}}$ is tilting in $\per{\Lambda}$, we conclude that $H=0$.
    \end{proof}

    \medskip

    We have now all the tools to prove Theorem \ref{thm:generalized Kauer move}.

    \medskip

    \begin{proof}[Proof of Theorem \ref{thm:generalized Kauer move}]
        Let us define an admissible cut $d:H\rightarrow \Z$ on $\Gamma$ as follows : for every vertex $v\in H/\sigma$,

        \begin{itemize}[label=\textbullet, font=\tiny]
            \item If there exists $h_{0}\in H\backslash H'$ and $h_{1}\in H'$ whose source vertices are $v$, then we choose any maximal sector $(h,r)\in\mathrm{Sect}(H',\sigma)$ and we set $d(\sigma^{-1}h)=1$ and $d(h')=0$ for every half-edge $h'\neq \sigma^{-1}h$ whose source vertex is $v$.

            \item Else, we choose any $h\in H$ whose source vertex is $v$ and we set $d(h)=1$ and $d(h')=0$ for every half-edge $h'\neq h$ whose source vertex is $v$.
        \end{itemize}

        \noindent By construction, $(\Gamma,d)$ defines a gentle algebra $\Lambda$ satisfying that $d(\alpha(h,H'))=0$ for all $h\in H'$. In particular, $B_{\Gamma}$ is the trivial extension of $\Lambda$ by Theorem \ref{thm:Schroll}. Moreover, by Theorem \ref{thm:mutation in gentle algebra},  $T:=\mu^{+}(\Lambda;e_{H\backslash H'}\Lambda)$ is tilting and its endomorphism algebra is the gentle algebra associated to $\mu^{+}_{\pi}(\Gamma,d)$, where $\pi$ is the product of all maximal sectors $(h,r)\in\mathrm{Sect}(H',\sigma)$ in $\Gamma$. Thus, by Lemma \ref{lem:tilting mutations}, we obtain the following isomorphism
        
        \begin{equation*}
                E:=\mathrm{End}_{\per{B_{\Gamma}}}(\mu^{+}(B_{\Gamma};e_{H\backslash H'}B_{\Gamma})) 
                \simeq\mathrm{End}_{\per{B_{\Gamma}}}(\mu^{+}(\Lambda;e_{H\backslash H'}\Lambda)\lotimes{\Lambda}{B_{\Gamma}})
        \end{equation*}

        \noindent Using Theorem \ref{thm:Rickard}, we deduce that the endomorphism algebra $E$ is isomorphic to the trivial extension of $\mathrm{End}_{\per{\Lambda}}(T)$. Using again Theorem \ref{thm:Schroll}, we know that $E$ is a Brauer graph algebra whose Brauer graph is $(H,\iota,\sigma_{\pi})$, where $\sigma_{\pi}$ is the orientation of $\mu^{+}_{\pi}(\Gamma,d)$. Thanks to Proposition \ref{prop:commutativity of graded generalized Kauer moves}, notice that the orientation $\sigma_{\pi}$ of $\mu^{+}_{\pi}(\Gamma,d)$ is exactly the orientation $\sigma_{H'}$ of $\mu^{+}_{H'}(\Gamma)$. Thus, $E$ is indeed isomorphic to $B_{\mu^{+}_{H'}(\Gamma)}$. 
    \end{proof}

    \medskip

    Since by Proposition \ref{prop:tilting mutation in symmetric algebra}, $\mu^{+}(B_{\Gamma},e_{H\backslash H'}B_{\Gamma})$ is tilting in $\per{B_{\Gamma}}$, we have another proof of Proposition \ref{prop:derived equivalent Brauer graph algebras} i.e. $B_{\Gamma}$ and $B_{\mu^{+}_{H'}(\Gamma)}$ are derived equivalent Brauer graph algebras.

    \medskip
    
    \begin{ex}
        Let us consider $\Gamma=(H,\iota,\sigma)$ the following Brauer graph

        \smallskip

        \begin{figure}[H]
            \centering
            \begin{tikzpicture}
        \tikzstyle{vertex}=[draw, circle, minimum size=0.1cm]
        \node[vertex] (1) at (-2,0) {};
        \node[vertex] (2) at (2,0) {};
        \node[vertex] (3) at (-4.5,1.5) {};
        \draw[Goldenrod] (1)--(2) node[near start, above right, scale=0.8]{$4^{+}$} node[near end, above, scale=0.8]{$4^{-}$};
        \draw[Orange] (1.90) arc(5:345:1) node[very near start, above right, scale=0.8, yshift=2mm, xshift=-2mm]{$1^{+}$} node[very near end, below left, yshift=-1mm, xshift=2mm, scale=0.8]{$1^{-}$};
        \draw[VioletRed] (1.120)--(3) node[near start, below left, scale=0.8, yshift=2mm, xshift=-2mm]{$2^{+}$} node[very near end, above right, scale=0.8]{$2^{-}$};
        \draw[Fuchsia] (1.-180) to[bend right=45] node[near end, left, scale=0.8]{$3^{+}$} (-3,-0.8);
        \draw[Fuchsia, rounded corners] (-3,-1) to[bend right=45]  (-2,-1.5) to[bend right=45] node[near start, right, scale=0.8]{$3^{-}$} (1.-25);
        \draw[->] (-1.2,0.1) arc(7:98:0.8) node[midway, above right, scale=0.8]{$\xi$};
        \draw[->] (-2.33,0.73) arc(115:140:0.8) node[midway, above left, scale=0.8]{$\alpha$} ;
        \draw[->] (-2.7,0.38) arc(152:190:0.8) node[near end, left, scale=0.8]{$\beta$};
        \draw[->] (-2.7,-0.38) arc(205:233:0.8) node[near start, below left, scale=0.8]{$\gamma$};
        \draw[->] (-2.3,-0.75) arc(250:295:0.8) node[midway, below left, scale=0.8]{$\delta$} ;
        \draw[->] (-1.47,-0.6) arc(310:355:0.8) node[midway, right, scale=0.8]{$\varepsilon$};
        \end{tikzpicture}
            \label{Final example}
        \end{figure}

        \smallskip

        \noindent where $j^{-}$ denotes $\iota j^{+}$ and $\sigma=(1^{+} \ 2^{+} \ 3^{+} \ 1^{-} \ 3^{-} \ 4^{+})$. Its associated Brauer graph algebra $B_{\Gamma}=kQ/I$ is the path algebra whose quiver is given by

        \smallskip
        
        \begin{center}
            \begin{tikzcd}[row sep=2cm, column sep=2cm]
                1 \arrow[cramped]{r}[above]{\alpha} \arrow[cramped, yshift=-1mm]{dr}[xshift=-1mm, left]{\delta} & 2 \arrow[cramped]{d}[right]{\beta} \\
                4 \arrow[cramped]{u}[left]{\xi} & 3 \arrow[cramped, yshift=1mm]{ul}[xshift=1mm, right]{\gamma} \arrow[cramped]{l}[below]{\varepsilon}
            \end{tikzcd}
        \end{center}

        \smallskip

        \noindent and whose set of relations is

        \smallskip
        
        \begin{enumerate}[label=(\Roman*)]
            \item $\xi\varepsilon\delta\gamma\beta\alpha-\gamma\beta\alpha\xi\varepsilon\delta$, $\delta\gamma\beta\alpha\xi\varepsilon-\beta\alpha\xi\varepsilon\delta\gamma$;
            \item $\alpha\xi\varepsilon\delta\gamma\beta\alpha$, $\delta\gamma\beta\alpha\xi\varepsilon\delta$, $\beta\alpha\xi\varepsilon\delta\gamma\beta$, $\varepsilon\delta\gamma\beta\alpha\xi\varepsilon$, $\gamma\beta\alpha\xi\varepsilon\delta\gamma$, $\xi\varepsilon\delta\gamma\beta\alpha\xi$;
            \item $\delta\xi$, $\varepsilon\beta$, $\alpha\gamma$, $\gamma\delta$.
        \end{enumerate}

        \smallskip
        
        \noindent Let us compute the endomorphism algebra of $\mu^{+}(B_{\Gamma};e_{2}B_{\Gamma}\, \oplus \,e_{4}B_{\Gamma})$. By Theorem \ref{thm:generalized Kauer move}, we have to compute the Brauer graph algebra associated to $\mu^{+}_{H'}(\Gamma)$ where $H'=\{1^{+},1^{-},3^{+},3^{-}\}$. The maximal sectors of elements in $H'$ are $(1^{+},0)$ and $(3^{+},2)$. Thus, the Brauer graph obtained from $\Gamma$ by a generalized Kauer move of $H'$ is given by $\mu^{+}_{H'}(\Gamma)=(H,\iota,\sigma_{H'})$ where

        \[\sigma_{H'}=(1^{+} \ 2^{+})(3^{+} \ 4^{+})\sigma(1^{+} \ 2^{-})(3^{-} \ 4^{-})=(1^{+} 2^{-})(1^{-} \ 3^{-} \ 4^{-} \ 3^{+})(2^{+} \ 4^{+})\]

        \smallskip
        
        \begin{figure}[H]
            \centering
        \begin{tikzpicture}
        \tikzstyle{vertex}=[draw, circle, minimum size=0.1cm]
        \node[vertex] (1) at (-2,0) {};
        \node[vertex] (2) at (2,0) {};
        \node[vertex] (3) at (-4.5,1.5) {};
        \draw[Goldenrod] (1)--(2) node[near start, above, scale=0.8]{$4^{+}$} node[near end, above, scale=0.8]{$4^{-}$};
        \draw[VioletRed] (1.120)--(3) node[midway, below, scale=0.8, yshift=-1mm, xshift=1mm]{$2^{+}$} node[midway, above, scale=0.8, yshift=2mm, xshift=-1mm]{$2^{-}$};
        \draw[Orange, rounded corners] (3) to[bend right=35] node[midway, below left, scale=0.8]{$1^{+}$} (-2,-2)  to[bend right=35] node[midway, below right, scale=0.8]{$1^{-}$} (2.-90);
        \draw[Fuchsia] (2.-145) to[bend right=35] node[near end, left, scale=0.8]{$3^{+}$} (1.2,-1.05);
        \draw[Fuchsia, rounded corners] (1.25,-1.25) to[bend right=35] (2,-1.7) to[bend right=60] node[midway, right, scale=0.8]{$3^{-}$} (2.-35);
        \draw[->] (-4.35,0.71) arc(-80:-35:0.8) node[midway, below, scale=0.8]{$h$};
        \draw[->] (-3.75, 1.2) arc(-23:265:0.8) node[midway, above left, scale=0.8]{$g$};
        \draw[->] (-1.2, 0.1) arc(7:140:0.8) node[midway, above right, scale=0.8]{$e$};
        \draw[->] (-2.7,0.39) arc(150:355:0.8) node[near end, below right , scale=0.8]{$f$};
        \draw[->] (1.2,-0.1) arc(185:210:0.8) node[midway, left, scale=0.8]{$a$};
        \draw[->] (1.4,-0.53) arc(220:238:0.8) node[midway, below left, scale=0.8]{$b$};
        \draw[->] (1.7,-0.75) arc(247:300:0.8) node[midway, below, scale=0.8]{$c$};
        \draw[->] (2.57, -0.57) arc(315:535:0.8)  node[midway, above right,scale=0.8]{$d$}  ;
        \end{tikzpicture}
        
            \label{Mutation of final example}
        \end{figure}

        \smallskip

        \noindent The Brauer graph algebra $B_{\mu^{+}_{H'}(\Gamma)}=kQ_{H'}/I_{H'}$ associated to $\mu^{+}_{H'}(\Gamma)$ is the path algebra whose quiver is given by

        \smallskip
        
        \begin{center}
            \begin{tikzcd}[row sep=2cm, column sep=2cm]
                1 \arrow[cramped, yshift=1mm]{r}[above]{h} \arrow[cramped, xshift=-1mm]{d}[left]{c} &2 \arrow[cramped, yshift=-1mm]{l}[below]{g} \arrow[cramped, xshift=1mm]{d}[right]{f} \\
                3 \arrow[cramped, yshift=-1mm]{r}[below]{d} \arrow[cramped, xshift=1mm]{u}[right]{b} &4 \arrow[cramped, xshift=-1mm]{u}[left]{e} \arrow[cramped, yshift=1mm]{l}[above]{a}
            \end{tikzcd}
        \end{center}

        \smallskip
        \noindent and whose set of relations is 

        \smallskip
        
        \begin{enumerate}[label=(\Roman*)]
            \item $gh-badc$, $hg-ef$, $cbad-adcb$, $dcba-fe$;
            \item $hgh$, $cbadc$, $ghg$, $fef$, $dcbad$, $badcb$, $adcba$, $efe$;
            \item $fh$, $cg$, $ge$, $af$, $da$, $hb$, $bc$, $ed$.
        \end{enumerate}

        \smallskip

        \noindent In particular, the Brauer graph algebras $B_{\Gamma}$ and $B_{\mu^{+}_{H'}(\Gamma)}$ are derived equivalent.

    \end{ex}

\medskip
\phantomsection

\setlength{\bibitemsep}{0pt}
\setlength{\biblabelsep}{8pt}
\defbibnote{nom}{\\ \small{\scshape{Université Grenoble Alpes, CNRS, Institut Fourier, 38610 Gières}} \\ \textit{E-mail address :} \href{mailto:valentine.soto@univ-grenoble-alpes.fr}{valentine.soto@univ-grenoble-alpes.fr}}
\printbibliography[postnote=nom, heading=bibintoc,title={References}]

\end{document}